\documentclass[a4paper,11pt]{amsart}
\input{Macros}
\usepackage[frenchb, english]{babel}
\usepackage{tabu}

\newcommand{\fm}{\mathfrak{m}}
\newcommand{\fn}{\mathfrak{n}}
\newcommand{\fp}{\mathfrak{p}}

\newcommand{\fr}{\mathfrak{r}}

\DeclareMathOperator{\Char}{char}

\DeclareMathOperator{\pr}{pr}

\begin{document}

\title{Gluing for stable families of surfaces in mixed characteristic}
\author{Quentin Posva}
\date{}

\email{quentin.posva@hhu.de}
\address{Mathematisches Institut der Heinriche-Heine-Universität Düsseldorf, Universitätsstr.1, 24.13.03.33}

\maketitle

\begin{quote}
\textsc{Abstract.} We study the normalizations of non-normal stable families of slc surfaces over an excellent DVR. In mixed characteristic, we establish a gluing statement that is relevant for the properness of the moduli space of such surfaces. We also study the fibers of such glued families, in mixed an equi-characteristic, and provide an essential result for slc adjunction in residue characteristic $>5$.

\bigskip
\noindent 
\emph{Keywords and MS classification: Stable surfaces, semi-log canonical singularities, mixed and positive characteristic. 14B05, 14G17, 14G45, 14J17, 14J29.}
\end{quote}

\tableofcontents

\section{Introduction}

Semi-log canonical (slc) singularities play a central role in the moduli theory of canonically polarized varieties in characteristic zero: they appear at the boundary of the compact moduli space of stable varieties \cite{Kollar_Families_of_varieties_of_general_type}. It is expected that slc varieties play a similar role in positive and mixed characteristics, at least for varieties of dimension $\leq 2$ (see \cite{Kollar_Families_of_stable_3folds_in_pos_char} for some issues appearing in higher dimensions).

Recent breakthroughs in the Minimal Model Program (MMP) for threefolds in mixed characteristic \cite{Bhatt&Co_MMP_for_3folds_in_mixed_char, Takamatsu_Yoshikawa_MMP_for_semistable_3folds_in_mixed_char} have led to advances in the moduli theory of stable surfaces in mixed characteristic. The corresponding moduli stack is known to be a separated Artin stack with finite diagonal and of finite type \cite[Theorem I]{Bhatt&Co_MMP_for_3folds_in_mixed_char}. Properness is not known yet, except for some specific subspaces \cite[Theorem J]{Bhatt&Co_MMP_for_3folds_in_mixed_char}. 

In characteristic zero, properness of the moduli stack of stable varieties is proved through a delicate use of semi-stable reductions and MMP methods: see \cite[\S 2.4]{Kollar_Families_of_varieties_of_general_type} for a presentation of the proof. In addition to semi-stable reduction and MMP, that proof needs a technical gluing statement. Indeed, we consider a pointed curve $(t\in T)$ together with a stable family $X^0\to T^0=T\setminus\{t\}$, and we need to complete the family over $T$, possibly after a finite base-change $T'\to T$. The variety $X^0$ is only demi-normal in general, and the MMP might fail for demi-normal varieties \cite[Example 5.4]{Fujino_Fundamental_Theorems_for_slc_pairs}. Thus we normalize $X^0$ and try to complete the family $(X^0)^n\to T^0$. If we succeed, we need to de-normalize the completed family to recover the correct generic fiber. In characteristic zero, this is achieved through Koll\'{a}r's gluing theory \cite[\S 5]{Kollar_Singularities_of_the_minimal_model_program}, that gives a dictionnary between slc stable varieties and their normalizations.

In \cite{Posva_Gluing_for_surfaces_and_threefolds} I have extended this dictionnary to surfaces and threefolds in positive characteristic, and given applications to the properness of the moduli space of stable surfaces in positive characteristic (see also \cite{Patakfalvi_Projectivity_moduli_space_of_surfaces_in_pos_char} for related results about this moduli space). In the present paper, I study the analogous gluing statement necessary for the proof of properness of the moduli stack of stable surfaces in mixed characteristic along the lines sketched above. The tailor-made statement is the following one:

\begin{theorem_intro}[see \autoref{proposition:gluing_exists} and \autoref{proposition:canonical_sheaf_descends}]\label{theorem:gluing_for_surface_families}
Let $R$ be an excellent DVR of mixed characteristic with maximal ideal $\pi R$. Then normalization gives a bijection
	\begin{equation*}
		\begin{pmatrix}
		\text{Threefold pairs } (X,\Delta)\\
		\text{flat and proper over }R\\
		\text{such that }(X,\Delta+X_\pi) \text{ is slc}\\
		\text{and }K_X+\Delta\text{ is ample}
		\end{pmatrix}
		\overset{1:1}{\longrightarrow}
		\begin{pmatrix}
		\text{Threefold pairs } (\bar{X},\bar{D}+\bar{\Delta})  \\
		\text{flat and proper over }R \\
		\text{such that }(\bar{X},\bar{D}+\bar{\Delta}+\bar{X}_\pi) \text{ is lc} \\
		\text{and }K_{\bar{X}}+\bar{D}+\bar{\Delta} \text{ is ample}\\
		\text{plus a generically fixed point free}\\
		R\text{-involution }\tau\text{ of }(\bar{D}^n,\Diff_{\bar{D}^n}\bar{\Delta}).
		\end{pmatrix}
	\end{equation*}
(On the right-hand side, $\bar{D}$ is understood to be a Weil divisor, possibly reducible, whose every coefficient is equal to $1$.)
\end{theorem_intro}

See \autoref{thm_intro:slc_adjunction} below and the discussion preceding it for some comments on the slc condition on $(X,\Delta+X_\pi)$. Let us say a word about the proof of \autoref{theorem:gluing_for_surface_families}. We construct an inverse map to the normalization process: $\bar{X}$ should be the normalization of an slc quotient $\bar{X}/R(\tau)$. It turns out that the main task is the construction of the quotient scheme, for then the slc structure is provided by work of Koll\'{a}r \cite[\S 5]{Kollar_Singularities_of_the_minimal_model_program} (see \autoref{proposition:canonical_sheaf_descends} for details).

To show that the quotient exists, thanks to a result of Witaszek \cite[Theorem 1.4]{Witaszek_Keel_theorem_and_quotients_in_mixed_char} and to Koll\'{a}r's theory \cite[Theorem 5.13]{Kollar_Singularities_of_the_minimal_model_program}, we only need to show that the equivalence relations induced by $\tau$ on the special fiber $X_\pi$ is finite. To achieve this, we relate $\bar{D}^n_\pi$ to a divisor contained in the round-down of $\Diff_{\bar{X}_\pi^n}(\bar{D}+\bar{\Delta})$, and show that the relation on $\bar{X}_\pi$ comes from a log involution on the reduced boundary of the $k(\pi)$-pair $(\bar{X}_\pi^n,\Diff_{X_\pi^n}(\bar{D}+\bar{\Delta})$. This is done using adjunction and classification of codimension two lc singularities. Then we are in position to apply the results from \cite{Posva_Gluing_for_surfaces_and_threefolds} and obtain finiteness. We also obtain a conditional finiteness result in case $\dim\bar{X}=4$: see \autoref{proposition:gluing_exists} for the precise statement.

We emphasize that all our results are independent of $\dim \bar{X}$ and of the properties of the positive characteristic residue field $k(\pi)$ --- except for \autoref{proposition:gluing_exists}, which shows that the quotient indeed exists in several situations.

\bigskip
In the second part of the paper, we study the fibers of $X=\bar{X}/R(\tau)$ over $\Spec R$ (which we now allow to be of equicharacteristic). The involution $\tau\in \Aut_R(\bar{D}^n)$ induces an involution on both fibers of $\bar{D}^n\to \Spec R$. We can therefore take the quotient of both fibers of $\bar{X}\to \Spec R$ by the induced equivalence relations. We ask whether these quotients are equal to the fibers of $X$. This commutativity property is always true for the generic fiber (\autoref{lemma:commutativity_for_generic_fiber}). For the special fiber, we first study the general case of the normalization of a demi-normal scheme over $\Spec R$ and give some sufficient conditions (see \autoref{proposition:equivalent_cond_for_commutativity} and \autoref{proposition:some_minimal_conditions_for_commutativity}). In particular, we prove that if $\bar{D}$ is normal and $\Char k(\pi)>2$ then the commutativity property always holds.

Then we turn to the case that interests us the most, when $(X,\Delta+X_\pi)$ is slc. We are able to refine the method of \autoref{proposition:some_minimal_conditions_for_commutativity} to prove that commutativity holds for families of slc surfaces under mild hypothesis:

\begin{theorem_intro}[\autoref{thm:commutativity_in_slc_case}]\label{thm_intro:commutativity}
Suppose $(X,\Delta+X_\pi)$ is slc of dimension $3$, flat and quasi-projective over $\Spec R$ and that $\Char k(\pi)>2$. Assume that $\bar{D}_\pi$ is reduced. Then the special fiber of $X\to \Spec R$ is the quotient of the special fiber of $\bar{X}\to \Spec R$ by the equivalence relation induced by $\tau\otimes_R k(\pi)$. 
\end{theorem_intro}

We remark that $\bar{D}_\pi$ is the scheme-theoretic intersection of two lc centers of $(\bar{X},\bar{D}+\bar{\Delta}+\bar{X}_\pi)$. It is a question of independent interest whether intersections of lc centers are reduced. This is true in characteristic zero \cite[Theorem 7.8]{Kollar_Singularities_of_the_minimal_model_program} (see also \cite[Corollary 3]{Kollar_Seminormal_log_center_and_deformations}), but not much is known in mixed and positive characteristics. Nonetheless, we can prove that the hypothesis of \autoref{thm_intro:commutativity} holds in relative dimension two and residue characteristic large enough:

\begin{theorem_intro}[\autoref{proposition:intersection_of_lc_centers}]\label{thm_intro_reducedness}
Let $(\bar{X},\bar{D}+\bar{\Delta}+\bar{X}_\pi)$ be a lc threefold pair of finite type over $\Spec R$. Assume that $\Char k(\pi)> 5$. Then $\bar{D}_\pi$ is reduced.
\end{theorem_intro}

The proof relies on the technique of \cite[Theorem 7.8]{Kollar_Singularities_of_the_minimal_model_program} and on the results of \cite{Bernasconi_Kollar_vanishing_theorems_for_threefolds_in_char_>5}. Putting together \autoref{thm_intro:commutativity} and \autoref{thm_intro_reducedness} we obtain:

\begin{theorem_intro}\label{thm_intro:commutativity_for_surfaces}
Suppose $(X,\Delta+X_\pi)$ is slc of dimension $3$, flat and quasi-projective over $\Spec R$ and that $\Char k(\pi)>5$. Then the fibers of $X$ are the quotients of the fibers of the normalization $\bar{X}$.
\end{theorem_intro}

A corollary of \autoref{thm_intro:commutativity_for_surfaces} is the fact that $X_\pi$ is $S_2$ if $\bar{X}_\pi$ is $S_2$: this follows from \autoref{lemma:quotient_of_S_2_is_S_2}. After the first version of this paper appeared, it was proved in \cite{Arvidsson_Bernasconi_Patakfalvi_Properness_of_mod_space} and \cite{Arvidsson_Vanishing_thm_for_bir_morphism_of_3folds} that if $(X,\Delta+X_\pi)$ is lc then $X_\pi$ is $S_2$ as soon as $\Char k(\pi)>5$. Combined with the above corollary, we obtain the following adjunction result (which settles the condition $\mathbf{(S_2)}$ of \cite[\S 6]{Posva_Gluing_for_surfaces_and_threefolds} and \cite[\S 4]{Arvidsson_Bernasconi_Patakfalvi_Properness_of_mod_space}):

\begin{theorem_intro}\label{thm_intro:slc_adjunction}
Let $(X,\Delta)$ be a flat quasi-projective threefold pair over $\Spec R$. Assume that $\Char k(\pi)>5$. Then $(X,\Delta+X_\pi)$ is slc in a neighbourhood of $X_\pi$ if and only if $(X_\pi, \Diff_{X_\pi}(\Delta))$ is slc.
\end{theorem_intro}
\begin{proof}
The results mentioned above imply that $X_\pi$ is demi-normal, see \cite[Proposition 3.2.1]{Arvidsson_Posva_Normality_of_mim_lc_centers}. Therefore we can perform adjunction along $X_\pi\hookrightarrow X$, and obtain a pair $(X_\pi, \Diff_{X_\pi}(\Delta))$ \cite[Definition 4.2]{Kollar_Singularities_of_the_minimal_model_program}. We have a commutative diagram
		$$\begin{tikzcd}
		(\bar{X}_\pi^n, \Diff(\bar{\Delta}+\bar{D}))\arrow[r]\arrow[d] & (\bar{X},\bar{D}+\bar{\Delta}+\bar{X}_\pi)\arrow[d] \\
		(X_\pi,\Diff(\Delta))\arrow[r] & (X,\Delta+X_\pi)
		\end{tikzcd}$$
where, by definition of the different $\Diff$, the vertical arrows are crepant birational morphisms. Thus $(X_\pi,\Diff(\Delta))$ is slc if and only if $(\bar{X}_\pi^n, \Diff(\bar{\Delta}+\bar{D}))$ is lc; and similarly $(X,\Delta+X_\pi)$ is lc along $X_\pi$ if and only if $(\bar{X}_\pi^n, \Diff(\bar{\Delta}+\bar{D}))$ is lc along $\bar{X}_\pi$. So we conclude by (inversion) of adjunction in the lc case, which holds by \cite[Lemma 3.3]{Patakfalvi_Projectivity_moduli_space_of_surfaces_in_pos_char} and \cite[Corollary 10.1]{Bhatt&Co_MMP_for_3folds_in_mixed_char}.
\end{proof}

Let me mention that the overall proof of \autoref{thm_intro:commutativity_for_surfaces} relies mostly on the birational geometry of \emph{normal} lc pairs: the additional input needed to obtain the slc case is of combinatorial nature. See \autoref{rmk:cond_for_comm_thm} for details.

To end this paper, we take the opportunity to study more generally the Serre properties of a demi-normal quotient. In view of \cite[10.18]{Kollar_Singularities_of_the_minimal_model_program}, it is not surprising that there is an interplay between the Serre properties of the quotient, and those of its normalization and of the conductor subschemes. The precise statement is given in \autoref{proposition:Serre_property_of_quotient}.

\subsection{Acknowledgements}
Financial support was provided by the European Research Council under grant number 804334. 

\section{Preliminaries}

\subsection{Conventions and notations}
We will work with schemes that are not necessarily defined over a field, but we keep most of the usual terminology. 

A \emph{variety} is a separated reduced equidimensional excellent Noetherian scheme. Note that a variety in our sense might be reducible, even disconnected. A \emph{curve} (resp. a \emph{surface}, resp. a \emph{threefold}) is a variety of dimension one (resp. two, resp. three).

Let $X$ be a variety and $\sF$ a coherent $\sO_X$-module. We let $\sF^*:=\sHom_X(\sF,\sO_X)$ be the dual of $\sF$ and write $\sF^{[m]}:=\left(\sF^{\otimes m}\right)^{**}$. We say that $\sF$ is $S_i$ if $\depth_{\sO_{X,x}}\sF_x\geq \min\{i,\dim\sF_x\}$ for every $x\in X$.

If $X$ is a reduced Noetherian scheme, its \emph{normalization} is defined to be its relative normalization along the structural morphism $\bigsqcup_\eta\Spec(k(\eta))\to X$ where $\eta$ runs through the generic points of $X$. Recall that $X$ is normal if and only if it is regular in codimension one and $\sO_X$ is $S_2$.

If $X$ is a variety, we define $\bQ$-divisors and $\bQ$-Cartier divisors the usual way. We usually use these notions when $X$ is (demi-)normal.

If $R$ is a ring that is the quotient of a regular one, and if $X$ is a proper scheme over $\Spec R$, then $X$ has a dualizing complex $\omega_X^\bullet$ and a dualizing sheaf $\omega_X:=h^{-i}\left(\omega_X^\bullet\right)$ where $i:=\max\{j\mid h^{-j}\left(\omega_X^\bullet\right)\neq 0\}$. See \cite[\S 2.1]{Bhatt&Co_MMP_for_3folds_in_mixed_char} for more details. If $X$ is a variety and $\omega_X$ is invertible in codimension one, it defines a canonical divisor $K_X$.

A \textit{pair} is a data $(X,\Delta)$ where $X$ is a demi-normal variety, $\Delta$ a $\bQ$-divisor with coefficients in $[0,1]$ without component in common with $\Sing(X)$, such that $K_X+\Delta$ is $\bQ$-Cartier. If $f\colon Y\to X$ is a proper birational morphism from a normal variety, then there is a $\bQ$-divisor $\Delta_Y$ on $Y$ such that 
		$$K_Y+\Delta_Y=f^*(K_X+\Delta)$$
and $\Delta_Y$ is uniquely defined if we assume $f_*K_Y=K_X$, which we always do. Running through all such $f\colon Y\to X$, we define the singularity of the pair $(X,\Delta)$ (e.g. \textit{lc}, \textit{klt}) the usual way as in \cite{Kollar_Mori_Birational_geometry_of_algebraic_varieties}. See \cite[\S 2.5]{Bhatt&Co_MMP_for_3folds_in_mixed_char} for details.

Let $R$ be a discrete valuation ring (DVR) with maximal ideal $\pi R$. If $X$ is an $R$-scheme, we denote by $X_\pi$ the scheme-theoretic special fiber. We usually assume that $X$ is flat over $\Spec R$: if in addition $X$ is pure-dimensional, then every irreducible component of $X_\pi$ has dimension $\dim X-1$ by \cite[14.97]{Gortz_Wedhorn_AG_I}.

We will use many times the coarse classification of codimension two lc singularities \cite[Theorem 2.31]{Kollar_Singularities_of_the_minimal_model_program}: if $(X,\Delta)$ is an lc pair and $x\in X$ a codimension two point, then $\lfloor \Delta\rfloor$ is either regular or nodal at $x$. Notice that the proof given in \cite[Theorem 2.31]{Kollar_Singularities_of_the_minimal_model_program} holds in the generality of normal excellent Noetherian schemes.

Let $(X,D+\Delta)$ be a pair where $D$ is a reduced divisor. Then adjunction theory produces canonically a pair $(D^n,\Diff_{D^n}\Delta)$: see \cite[\S 4.1]{Kollar_Singularities_of_the_minimal_model_program}. In some cases adjunction already produces a different divisor on $D$, which will be important for us at a few places. See \cite[Definition 4.2]{Kollar_Singularities_of_the_minimal_model_program} for minimalistic assumptions necessary to perform adjunction.

\subsection{Quotients by finite equivalence relations}\label{section:quotients_by_finite_relations}
The theory of quotients by finite equivalence relations is developped in \cite{Kollar_Quotients_by_finite_equivalence_relations} and \cite[\S 9]{Kollar_Singularities_of_the_minimal_model_program}. For convenience, we recall the basic definitions and constructions that we will need.

Let $S$ be a base scheme, and $X,R$ two reduced $S$-schemes. An $S$-morphism $\sigma=(\sigma_1,\sigma_2)\colon R\to X\times_S X$ is a \textbf{set theoretic equivalence relation} (over $S$) if, for every geometric point $\Spec K\to S$, the induced map
		$$\sigma(\Spec K)\colon \Hom_S(\Spec K,R)\to \Hom_S(\Spec K,X)\times \Hom_S(\Spec K,X)$$
	is injective and an equivalence relation of the set $\Hom_S(\Spec K,X)$. We say in addition that $\sigma\colon R\to X\times_S X$ is \textbf{finite} if both $\sigma_i\colon R\to X$ are finite morphisms.
	
	Suppose that $\sigma\colon R\hookrightarrow X\times_S X$ is a reduced closed subscheme. Then there is a minimal set theoretic equivalence relation generated by $R$: see \cite[9.3]{Kollar_Singularities_of_the_minimal_model_program}. Even if both $\sigma_i\colon R\to X$ are finite morphisms, the resulting pro-finite relation may not be finite: achieving transitivity can create infinite equivalence classes. 
	
	Let $f\colon X'\to X$ be a finite $S$-morphism, and $\tau$ an involution of $X'$. We denote by $R_X(\tau)\rightrightarrows X$ the set theoretic equivalence relation induced by the closed subscheme $(f\times f)\left(\Delta_{X'/S}\cup \Gamma_\tau\right)$ of $X\times_SX$, where $\Delta_{X'/S}\subset X'\times_S X'$ is the diagonal and $\Gamma_\tau\subset X'\times_SX'$ is the graph of $\tau$. If there is no  risk of confusion, we drop the subscript and simply write $R(\tau)$.

	Let $(\sigma_1,\sigma_2)\colon R\to X\times_S X$ be a finite set theoretic equivalence relation. A \textbf{geometric quotient} of this relation is an $S$-morphism $q\colon X\to Y$ such that
		\begin{enumerate}
			\item $q\circ \sigma_1=q\circ \sigma_2$,
			\item $(Y,q\colon X\to Y)$ is initial in the category of algebraic spaces for the property above, and
			\item $q$ is finite.
		\end{enumerate}
We note that if the quotient $Y$ exists, then it is local for the flat topology of $Y$ \cite[Corollary 9.11]{Kollar_Singularities_of_the_minimal_model_program}.		
		
The most important result for us is the following theorem of Witaszek \cite[Theorem 1.4]{Witaszek_Keel_theorem_and_quotients_in_mixed_char}. Assume $X$ is separated of finite type over an excellent scheme $S$ (in our applications, $S$ will be the spectrum of a DVR), and $R\to X\times_S X$ a finite set-theoretic equivalence relation over $S$. If the geometric quotient $X_\bQ/R_\bQ$ exists, then the geometric quotient $X/R$ also exists (as an algebraic space) and is of finite type over $S$. If $X$ is projective over $S$, then $X$ has the Chevalley--Kleiman property, so $X/R$ is a scheme by \cite[Corollary 48]{Kollar_Quotients_by_finite_equivalence_relations}.

\begin{lemma}\label{lemma:quotient_of_S_2_is_S_2}
Let $X$ be a reduced equidimensional scheme that is separated of finite type over a field of positive characteristic, and $D\subset X$ a reduced scheme of pure codimension one. Let $\tau$ be an involution of the normalization $D^n$, such that $R(\tau)\rightrightarrows X$ is a finite set theoretic equivalence relation, and let $p\colon X\to Y:=X/R(\tau)$ be the quotient.

If $X$ is $S_2$, then so is $Y$.
\end{lemma}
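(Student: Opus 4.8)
The plan is to present $Y$ as the spectrum of the sheaf of $\tau$-invariant functions inside $p_*\mathcal{O}_X$, and then to bound the failure of $\mathcal{O}_Y$ to be $S_2$ by the associated primes of the cokernel of the inclusion $\mathcal{O}_Y\hookrightarrow p_*\mathcal{O}_X$. First I record the structural facts. The quotient $p$ is finite and surjective, so $Y$ is reduced (as $\mathcal{O}_Y\subseteq p_*\mathcal{O}_X$ and $X$ is reduced), equidimensional, and $\dim Y=\dim X$; being of finite type over a field it is catenary, so codimension is well behaved. Write $f\colon D^n\to X$ for the composite of the normalization $D^n\to D$ with the inclusion $D\hookrightarrow X$, and let $g_0:=p\circ f=p\circ f\circ\tau\colon D^n\to Y$ be the induced finite morphism (the two composites agree because $p$ is constant on $R(\tau)$-classes). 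By the construction of the quotient by a finite equivalence relation \cite[\S 9]{Kollar_Singularities_of_the_minimal_model_program}, $\mathcal{O}_Y$ is the sheaf of $R(\tau)$-invariants in $p_*\mathcal{O}_X$; since $R(\tau)$ is generated as an equivalence relation by the diagonal together with the correspondence $\Gamma_\tau\cong D^n$, and invariance under the diagonal is vacuous while invariance under the generating correspondence propagates to the generated relation, this coincides with the equalizer of $f^\#$ and $(f\circ\tau)^\#$. Thus there is a left-exact sequence of coherent $\mathcal{O}_Y$-modules
\begin{equation*}
0\longrightarrow \mathcal{O}_Y\longrightarrow p_*\mathcal{O}_X\xrightarrow{\ f^\#-(f\tau)^\#\ } g_{0,*}\mathcal{O}_{D^n},
\end{equation*}
and I set $\mathcal{Q}:=\operatorname{im}\!\big(f^\#-(f\tau)^\#\big)\cong (p_*\mathcal{O}_X)/\mathcal{O}_Y$.

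Next come the two module-theoretic inputs. Since $p$ is finite and $X$ is $S_2$ and equidimensional, $p_*\mathcal{O}_X$ is $S_2$ as an $\mathcal{O}_Y$-module: for $y\in Y$ one has $\dim_{\mathcal{O}_{Y,y}}(p_*\mathcal{O}_X)_y=\dim\mathcal{O}_{Y,y}$ and $\operatorname{depth}(p_*\mathcal{O}_X)_y=\min_{p(x)=y}\operatorname{depth}\mathcal{O}_{X,x}\geq \min\{2,\dim\mathcal{O}_{Y,y}\}$. For the cokernel, the crucial observation is that $\mathcal{Q}$ is a submodule of $g_{0,*}\mathcal{O}_{D^n}$. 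Because $D^n$ is normal, hence $S_2$ with no embedded components, and pure of dimension $\dim X-1$, while $g_0$ is finite, the associated primes of $g_{0,*}\mathcal{O}_{D^n}$ are exactly the images of the generic points of the components of $D^n$, each of which is a point of codimension one in $Y$. Consequently $\operatorname{Ass}(\mathcal{Q})\subseteq \operatorname{Ass}(g_{0,*}\mathcal{O}_{D^n})$ consists only of codimension-one points.

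Finally I conclude by a depth count. As $Y$ is reduced it is $S_1$, so it suffices to show $\operatorname{depth}\mathcal{O}_{Y,y}\geq 2$ at every $y$ with $\dim\mathcal{O}_{Y,y}\geq 2$. Localizing the sequence above at such a $y$ and passing to the long exact sequence of local cohomology supported at $\fm_y$, the $S_2$-property of $p_*\mathcal{O}_X$ gives $H^0_{\fm_y}\big((p_*\mathcal{O}_X)_y\big)=H^1_{\fm_y}\big((p_*\mathcal{O}_X)_y\big)=0$, while $y$ being of codimension $\geq 2$ is not an associated prime of $\mathcal{Q}$, whence $H^0_{\fm_y}(\mathcal{Q}_y)=0$. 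The long exact sequence then forces $H^0_{\fm_y}(\mathcal{O}_{Y,y})=H^1_{\fm_y}(\mathcal{O}_{Y,y})=0$, that is $\operatorname{depth}\mathcal{O}_{Y,y}\geq 2$, so $Y$ is $S_2$. The step I expect to demand the most care is the equalizer description, i.e. the identification of the target with $g_{0,*}\mathcal{O}_{D^n}$: the full equivalence relation $R(\tau)$ can acquire components of codimension $\geq 2$ through transitive closure at the non-normal points of $D$, and one must check that these impose no extra conditions on invariant functions, so that $R(\tau)$-invariance equals $\Gamma_\tau$-invariance scheme-theoretically. It is precisely this reduction to the codimension-one correspondence $\Gamma_\tau\cong D^n$ that keeps $\operatorname{Ass}(\mathcal{Q})$ in codimension one and makes the depth count work.
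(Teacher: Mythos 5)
Your proof is correct and rests on the same two pillars as the paper's: the identification of $\mathcal{O}_Y$ with the $\tau$-invariant sections of $p_*\mathcal{O}_X$ (Koll\'ar's \cite[9.10]{Kollar_Singularities_of_the_minimal_model_program}, which is exactly the reference the paper invokes and which already disposes of the transitive-closure worry you flag at the end), and the $S_2$ property of $X$ transported through the finite map $p$. The only difference is packaging: the paper verifies $S_2$ via the section-extension criterion for reduced ($S_1$) schemes --- extend $p^*s$ over a codimension-two closed subset using $S_2$ of $X$, observe that $\tau$-invariance on a dense open of $D^n$ propagates to all of $D^n$, then descend by \cite[9.10]{Kollar_Singularities_of_the_minimal_model_program} --- whereas you run the equivalent depth count through the left-exact sequence $0\to\mathcal{O}_Y\to p_*\mathcal{O}_X\to g_{0,*}\mathcal{O}_{D^n}$ together with the observation that the cokernel has associated primes only in codimension one; that observation is precisely the sheaf-theoretic form of the paper's density argument.
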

\begin{proof}
The argument is contained in the proof of \cite[3.4.1]{Posva_Gluing_for_surfaces_and_threefolds}, although it is not phrased in our generality. Since $X$ is reduced, the morphism $p\colon X\to Y$ factors through $Y_\text{red}$. It follows by the universal property of $p$ that $Y_\text{red}\to Y$ is an isomorphism, so $Y$ is reduced and in particular $S_1$. Thus by \cite[1.8]{Hartshorne_Generalized_divisors_and_biliaison}, it is sufficient to show that for every $U\subset Y$ open and $Z\subset U$ closed of codimension $\geq 2$, the restriction map $\sO_Y(U)\to \sO_Y(U\setminus Z)$ is surjective. The quotient is Zariski-local, so we may assume that $U=Y$.

Take $s\in \sO(U\setminus Z)$. Since $X$ is $S_2$ and $p^{-1}(Z)$ has codimension $\geq 2$, the section $p^*s$ extends to a global section over $X$. By construction $p^*s|_{D^n}$ is $\tau$-invariant on an open dense subset, hence it is globally $\tau$-invariant. Thus $p^*s$ descends to a global section of $Y$ \cite[Lemma 9.10]{Kollar_Singularities_of_the_minimal_model_program} which is an extension of $s$.
\end{proof}

\subsection{Demi-normal varieties and slc pairs}\label{section:demi_normal_and_slc}
We give the definitions and the basic properties of demi-normal varieties and of slc pairs over an excellent DVR.

\begin{definition}
A one-dimensional Noetherian local ring $(R,\fm)$ is called a \textbf{node} if there exists a ring isomorphism $R\cong S/(f)$, where $(S,\fn)$ is a regular two-dimensional local ring and $f\in \fn^2$ is an element that is not a square in $\fn^2/\fn^3$.

A locally Noetherian reduced scheme (or ring) is called \textbf{nodal} if its codimension one local rings are regular or nodal. It is called \textbf{demi-normal} if it is $S_2$ and nodal.
\end{definition}

Let $X$ be a reduced scheme with normalization $\pi\colon \bar{X}\to X$. The conductor ideal of the normalization is defined as
		$$\fI:=\sHom_X(\pi_*\sO_{\bar{X}},\sO_X).$$
It is an ideal in both $\sO_X$ and $\sO_{\bar{X}}$. We let
		$$D:=\Spec_X\sO_X/\fI,\quad \bar{D}:=\Spec_{\bar{X}}\sO_{\bar{X}}/\fI,$$
and call them the \textit{conductor subschemes}. The basic properties of $D$ and $\bar{D}$, in case $X$ is demi-normal, are given by the following lemma.

\begin{lemma}\label{lemma:properties_of_nodes}
Notations as above. Assume that $X$ is excellent and demi-normal. Then:
	\begin{enumerate}
		\item $D$ and $\bar{D}$ are reduced of pure codimension $1$; in particular $D$ is the closure of the singular codimension one points of $X$.
		\item If $\eta\in D$ is a generic point such that $\Char k(\eta)\neq 2$, the morphism $\bar{D}\to D$ is \'{e}tale of degree $2$ in a neighborhood of $\eta$.
	\end{enumerate}
Assume in addition that $X$ has a dualizing sheaf $\omega_X$. Then:
	\begin{enumerate}\setcounter{enumi}{2}
		\item $\omega_X$ is invertible in codimension one, so $X$ has a canonical divisor $K_X$;
		\item Let $\Delta$ be a $\bQ$-divisor on $X$ with no component supported on $D$, and such that $K_X+\Delta$ is $\bQ$-Cartier. If $\bar{\Delta}$ denotes the divisorial part of $\pi^{-1}(\Delta)$, then there is a canonical isomorphism
				$$\pi^*\omega_X^{[m]}(m\Delta)\cong \omega_{\bar{X}}^{[m]}(m\bar{D}+m\bar{\Delta})$$
		for $m$ divisible enough.
	\end{enumerate}
\end{lemma}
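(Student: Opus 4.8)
The plan is to prove all four statements by first reducing everything to the codimension-one points of the non-normal locus, where demi-normality forces a node with completely explicit structure, and then propagating the conclusions to all of $X$ by exploiting the $S_2$ hypothesis. Since $X$ is demi-normal, it is $S_2$ everywhere and regular in codimension one exactly off the support of $\sO_X/\fI$; Serre's criterion then shows that $X$ fails to be normal precisely along a pure codimension-one locus, so $D$ and $\bar{D}$ have pure codimension one. At a generic point $\eta$ of $D$ the local ring $\sO_{X,\eta}$ is a node $S/(f)$, and passing to the completion I would identify the conductor $\fI$ with the maximal ideal: this shows $D$ and $\bar{D}$ are reduced at $\eta$ (i.e. $R_0$) and that $\bar{D}\to D$ is finite of degree two there. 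To upgrade reducedness to all points I would check that $\sO_X/\fI$ and $\sO_{\bar{X}}/\fI$ have no embedded primes: because $\fI$ is a conductor ideal, hence reflexive in the $S_2$ schemes $X$ and $\bar{X}$, the quotients are $S_1$, and a scheme that is $S_1$ and generically reduced is reduced. This gives (1).

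For (2), I would analyse the node $\sO_{X,\eta}$ at a generic point $\eta$ of $D$ with $\Char k(\eta)\neq 2$. On the completion, either $f$ splits into two distinct branches defined over $k(\eta)$, in which case $\bar{D}\to D$ is two reduced copies of $\Spec k(\eta)$ and is plainly étale of degree two; or $f$ is irreducible and the two branches are conjugate over a quadratic extension $k(\eta')/k(\eta)$, in which case the hypothesis that $f$ is not a square in $\fn^2/\fn^3$, together with $\Char k(\eta)\neq 2$, forces this extension to be separable, so again $\bar{D}\to D$ is étale of degree two. This is exactly the place where the residue characteristic $\neq 2$ is used.

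For (3), since the DVR $R$ is regular it has a dualizing complex, and hence so does $X$, being essentially of finite type over $R$; I set $\omega_X:=h^{-i}(\omega_X^\bullet)$ as in the conventions. It is a coherent $S_2$ sheaf of generic rank one, because $X$ is generically regular. At a codimension-one point $X$ is either regular or a node, and a node is a hypersurface singularity, hence Gorenstein, so $\omega_X$ is invertible in codimension one. An $S_2$ rank-one sheaf on the $S_2$ scheme $X$ that is invertible in codimension one is divisorial, and therefore determines a canonical Weil divisor class $K_X$ with $\sO_X(K_X)\cong\omega_X$, proving (3).

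For (4), I would choose $m$ divisible enough that $m(K_X+\Delta)$ is Cartier; then $\omega_X^{[m]}(m\Delta)\cong\sO_X(m(K_X+\Delta))$ is invertible, so its pullback is an invertible (in particular reflexive) sheaf on the normal scheme $\bar{X}$, while the right-hand side $\omega_{\bar{X}}^{[m]}(m\bar{D}+m\bar{\Delta})$ is reflexive. Since a homomorphism of reflexive sheaves on $\bar{X}$ that is an isomorphism in codimension one is an isomorphism, it suffices to produce the comparison at codimension-one points of $\bar{X}$. Away from the conductor, $\pi$ is an isomorphism, $\bar{D}$ vanishes and $\bar{\Delta}=\pi^*\Delta$, so both sides agree; over a generic point of $D$ the divisor $\Delta$ contributes nothing, since it has no component on $D$, and the whole content is the local identity $\pi^{[*]}\omega_X\cong\omega_{\bar{X}}(\bar{D})$ at the node, which is the residue computation describing dualizing differentials on a node as forms on $\bar{X}$ with at worst simple poles along $\bar{D}$ and opposite residues. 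Taking $m$-th reflexive powers then yields the isomorphism in codimension one, which extends. I expect this last node computation to be the main obstacle: it carries the genuine content of the twist by $\bar{D}$, and over a mixed-characteristic DVR the point $\eta$ may be vertical (supported in the special fibre), so one must confirm that the Gorenstein/residue identity is insensitive to the residue characteristic even though characteristic $2$ did intervene in (2). Everything else amounts to verifying that the field-based arguments of \cite{Kollar_Singularities_of_the_minimal_model_program} carry over verbatim to excellent schemes essentially of finite type over $R$.
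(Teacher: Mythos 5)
Your proposal is correct and follows essentially the same route as the paper, whose entire proof consists of observing that $X$ has a dualizing sheaf (being the spectrum of a quotient of a regular ring) and then citing \cite[5.2, 5.7, 4.2]{Kollar_Singularities_of_the_minimal_model_program}, noting that those arguments carry over to excellent schemes essentially of finite type over a DVR; what you have written is a faithful reconstruction of exactly those cited arguments (conductor reflexive hence $S_1$ quotients, the explicit node analysis with the char~$\neq 2$ separability point, Gorenstein in codimension one, and the reflexive-extension-from-codimension-one reduction for adjunction). The residue-characteristic worry you raise for the node computation in (4) is harmless: the identity $\pi^{[*]}\omega_X\cong\omega_{\bar X}(\bar D)$ at a node comes from duality for the finite morphism $\pi$ together with the conductor identity, and holds in every characteristic, characteristic $2$ intervening only in (2).
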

\begin{proof}
This is proved as in \cite[5.2, 5.7]{Kollar_Singularities_of_the_minimal_model_program} (notice that the adjunction property is proved in \cite[Definition 4.2, 5.7]{Kollar_Singularities_of_the_minimal_model_program} in the generality we need).
\end{proof}

\begin{lemma}\label{lemma:involution_from_normalization}
Notations as above. Assume that $\Char k(\eta)\neq 2$ for every generic point $\eta$ of $D$. Then:
	\begin{enumerate}
		\item the induced morphism of normalizations $\bar{D}^n\to D^n$ is the geometric quotient by a Galois involution $\tau$;
		\item if $K_X+\Delta$ is $\bQ$-Cartier, then $\tau$ is a log involution of $(\bar{D}^n,\Diff_{\bar{D}^n}\bar{\Delta})$.
	\end{enumerate}
\end{lemma}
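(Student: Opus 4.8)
The plan is to build $\tau$ from the generic double-cover structure, identify $D^n$ as the quotient, and then deduce the log-involution property from the adjunction isomorphism of the preceding lemma.

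First I would construct $\tau$ and prove (1). By the preceding lemma the morphism $\bar{D}\to D$ is finite, and by its part (2), under the hypothesis $\Char k(\eta)\neq 2$ at every generic point $\eta$ of $D$, it is étale of degree $2$ over a neighbourhood of each such $\eta$. Hence the induced finite morphism $q\colon \bar{D}^n\to D^n$ is generically étale of degree $2$, so the associated extension of total rings of fractions is separable of degree $2$, therefore Galois with group $\bZ/2$. Let $\tau$ be its nontrivial automorphism. Since $\bar{D}^n$ is normal and finite over $D^n$, it is the integral closure of $\sO_{D^n}$ in that extension, which $\tau$ preserves; so $\tau$ extends to an involution of $\bar{D}^n$ over $D^n$ (interchanging the two components of $\bar{D}^n$ over a component of $D^n$ when there are two, and acting on a single component otherwise). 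As $D^n$ is normal and $q$ is finite of generic degree $2$, the finite birational morphism $\bar{D}^n/\langle\tau\rangle\to D^n$ of normal schemes is an isomorphism, so $q$ exhibits $D^n$ as the geometric quotient $\bar{D}^n/\langle\tau\rangle$.

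Next, for (2), I would exploit the isomorphism from part (4) of the preceding lemma. Assuming $K_X+\Delta$ is $\bQ$-Cartier, choose $m$ divisible enough that
	\[
		\pi^*\omega_X^{[m]}(m\Delta)\;\cong\;\omega_{\bar{X}}^{[m]}\!\big(m\bar{D}+m\bar{\Delta}\big).
	\]
Restricting to $\bar{D}^n$ and applying adjunction on the right gives
	\[
		\big(\pi^*\omega_X^{[m]}(m\Delta)\big)\big|_{\bar{D}^n}\;\cong\;\omega_{\bar{D}^n}^{[m]}\!\big(m\,\Diff_{\bar{D}^n}\bar{\Delta}\big).
	\]
Because $\pi$ restricts to $q$ on the conductor, the left-hand side is $q^*$ of the reflexive sheaf $\omega_X^{[m]}(m\Delta)\big|_{D^n}$ on $D^n$. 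Since $q\circ\tau=q$ by (1), this pullback carries a canonical $\tau$-equivariant structure; transporting it across the isomorphism and comparing with the canonical identification $\tau^*\omega_{\bar{D}^n}\cong\omega_{\bar{D}^n}$ forces $\tau^*\Diff_{\bar{D}^n}\bar{\Delta}=\Diff_{\bar{D}^n}\bar{\Delta}$. This is exactly the statement that $\tau$ is a log involution of $(\bar{D}^n,\Diff_{\bar{D}^n}\bar{\Delta})$.

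The hard part will be upgrading the easy $\bQ$-linear equivalence $\tau^*(K_{\bar{D}^n}+\Diff_{\bar{D}^n}\bar{\Delta})\sim K_{\bar{D}^n}+\Diff_{\bar{D}^n}\bar{\Delta}$ to an equality of divisors, as the notion of log involution demands. My intention is to avoid passing to divisor classes altogether: the different is recovered as an honest divisor from the reflexive sheaf $\omega_{\bar{D}^n}^{[m]}(m\,\Diff_{\bar{D}^n}\bar{\Delta})$, and the $\tau$-equivariance of that sheaf — inherited from its description as a pullback from $D^n$ — pins the divisor down up to $\tau$-invariance. I would also record that nothing here is sensitive to the mixed-characteristic base: every claim concerns codimension-one points of $\bar{D}^n$ (equivalently codimension-two points of $\bar{X}$), where the demi-normal structure of the $R$-scheme $X$ has already been described by the preceding lemma, so the proof runs exactly as over a field in \cite[5.7]{Kollar_Singularities_of_the_minimal_model_program}.
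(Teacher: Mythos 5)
The paper's own ``proof'' of this lemma is a bare citation to \cite[2.2.5]{Posva_Gluing_for_surfaces_and_threefolds}, which in turn follows \cite[5.2, 5.7]{Kollar_Singularities_of_the_minimal_model_program}; your argument is a reconstruction of exactly that standard route, so in approach you and the paper agree. Your part (1) is complete and correct: generic \'{e}taleness of degree $2$ away from characteristic $2$ gives a Galois $\bZ/2$ action on the total ring of fractions, $\bar{D}^n$ is the integral closure so $\tau$ preserves it, and ``finite birational onto a normal target'' finishes it.

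In part (2) there is one step that is asserted rather than proved, and it is the step that carries the real content. You correctly flag that an abstract $\tau$-equivariant structure on $\omega_{\bar{D}^n}^{[m]}(m\Diff_{\bar{D}^n}\bar{\Delta})$ only yields $m\,\tau^*\Diff_{\bar{D}^n}\bar{\Delta}\sim m\,\Diff_{\bar{D}^n}\bar{\Delta}$, but your proposed fix --- that equivariance of the reflexive sheaf ``pins the divisor down'' --- is still the same non sequitur unless you track an actual section. The mechanism that works is: at a codimension-one point of $D^n$, a local generator $s$ of the invertible sheaf $\omega_X^{[m]}(m\Delta)$ pulls back to a generator of $\omega_{\bar{X}}^{[m]}(m\bar{D}+m\bar{\Delta})$ near \emph{both} preimage branches, and its Poincar\'{e} residue generates $\omega_{\bar{D}^n}^{[m]}(m\Diff_{\bar{D}^n}\bar{\Delta})$ as a subsheaf of the constant sheaf of rational $m$-pluri-forms simultaneously at $P$ and at $\tau(P)$; equality of these subsheaves (not of their isomorphism classes) is what forces $\tau^*\Diff_{\bar{D}^n}\bar{\Delta}=\Diff_{\bar{D}^n}\bar{\Delta}$. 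The point needing verification is that $\tau^*\mathcal{R}(\pi^*s)=\pm\,\mathcal{R}(\pi^*s)$, i.e.\ that the residues along the two branches over a node are interchanged by $\tau$ up to a unit: since $\tau$ acts on $\bar{D}^n$ but does not extend to $\bar{X}$, the residues at $n(P)$ and $n(\tau(P))$ are computed at genuinely different points of $\bar{X}$, and their comparison requires the explicit local description of the node (as in \cite[5.8]{Kollar_Singularities_of_the_minimal_model_program}). With that computation inserted, your proof is complete; without it, the invariance of the different is exactly what remains to be shown.
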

\begin{proof}
See \cite[2.2.5]{Posva_Gluing_for_surfaces_and_threefolds}.
\end{proof}

\begin{definition}
We say that $(X,\Delta)$ is a \textbf{semi-log canonical (slc) pair} over a DVR if: $X$ is demi-normal of finite type over a DVR, $\Delta$ is a $\bQ$-divisor with no components along $D$, $K_X+\Delta$ is $\bQ$-Cartier, and the normalization $(\bar{X}, \bar{D}+\bar{\Delta})$ is an lc pair.
\end{definition}

\subsubsection{Semi-normality, weak normality and demi-normality}\label{section:weak_normality}
In this subsection, we compare the notions of semi-normality, weak normality and demi-normality.

For the definitions of \textbf{seminormal} and \textbf{weakly normal} factorizations, we refer to \cite[Appendix I.7.2]{Kollar_Rational_curves}. For us, the relevant property is the following: given a finite surjective morphism of reduced schemes $Z\to X$, we that $X$ is seminormal in $Z$ (resp. weakly normal in $Z$) if, given a factorization $Z\to X'\to X$ where $X'\to X$ is an homeomorphism inducing isomorphisms on residues fields (resp. an homeomorphism inducing purely inseparable extension of residue fields), it holds that $X'=X$. These properties are local on $X$.

Given a reduced excellent scheme $X$, we say that $X$ is seminormal (resp. weakly normal) if the normalization $X^n\to X$ is semi-normal (resp. weakly normal).

\begin{lemma}\label{lemma:pushout_and_weak_normality}
Consider a pushout diagram of reduced $\bF_p$-schemes 
		$$\begin{tikzcd}
		X_1\arrow[r, hook]\arrow[d, "q"] & X_2\arrow[d, "p"] \\
		X_3\arrow[r, hook] & Y
		\end{tikzcd}$$
where the horizontal arrows are closed embeddings, $p$ is finite surjective and an isomorphism over every generic point of $X$, and $q$ is finite surjective. Then $Y$ is weakly normal in $X_2$ if and only $X_3$ is weakly normal in $X_1$.
\end{lemma}
\begin{proof}
As all morphisms are affine, we may assume that all schemes are affine. We use the following characterization of weak normality for $\bF_p$-schemes: a finite surjective morphism $Z\to Z'$ is weakly normal if and only if $\sO_{Z'}$ is $p$-closed in $\sO_Z$ \cite[Corollary to Theorem 1]{Yanagihara_Weakly_normal_ring_extensions}.

Assume that $Y$ is weakly normal in $X_2$. Pick an element $s\in \sO_{X_1}$ such that $s^p\in \sO_{X_3}$. Choose any lift $u\in \sO_{X_2}$ of $s$. Then the pair $(u^p,s^p)\in \sO_{X_2}\times \sO_{X_3}$ glues to an element of $\sO_Y$ that is a $p$-th power of $u$. Therefore $u\in \sO_Y$, and its restriction to $X_3$ is precisely $s$.

Conversely, assume that $X_3$ is weakly normal in $X_1$. Pick an element $v\in \sO_{X_2}$ such that $v^p\in \sO_Y$. If $t\in \sO_{X_1}$ is the restriction of $v$ to $X_1$, then $t^p\in \sO_{X_3}$. Thus $t\in \sO_{X_3}$ already. Therefore the pair $(v,t)\in \sO_{X_2}\times\sO_{X_3}$ glues to $v\in \sO_Y$.
\end{proof}

Next let us recall the structure theorems for seminormal and weakly normal ring extensions, as given in \cite{Traverso_Seminormality_and_Picard_group} and \cite{Yanagihara_Weakly_normal_ring_extensions}. Let $A\subset B$ be a finite extension of rings, $\fp$ be a prime ideal of $A$ and $\fp_1,\dots,\fp_n$ the prime ideals of $B$ lying over $\fp$. We have a natural map $k(\fp)\to \prod_ik(\fp_i)$. Consider the pullback diagram of rings
		$$\begin{tikzcd}
		A^+\arrow[r] \arrow[d] & B\arrow[d] \\
		k(\fp)\arrow[r] & \prod_{i=1}^nk(\fp_i).
		\end{tikzcd}$$
We call $A^+$ the \textbf{gluing of $B$ over $\fp\subset A$}. It is an intermediate extension of $A\subset B$, with a unique prime ideal $\fp^+$ over $\fp\subset A$ whose residue field is $k(\fp^+)=k(\fp)$. 

Furthermore, let $k'$ be the largest subfield of $\prod_ik(\fp_i)$ which is a purely inseparable extension of $k(\fp)$. Consider the pullback diagram
		$$\begin{tikzcd}
		A^*\arrow[r] \arrow[d] & B\arrow[d] \\
		k'\arrow[r] & \prod_{i=1}^nk(\fp_i).
		\end{tikzcd}$$
We call $A^*$ the \textbf{weak gluing of $B$ over $\fp\subset A$}.  It is an intermediate extension of $A\subset B$, with a unique prime ideal $\fp^*$ over $\fp\subset A$ whose residue field is $k(\fp^*)=k'$. 

\begin{theorem}[{\cite[Theorem 2.1]{Traverso_Seminormality_and_Picard_group} and \cite[Theorem 3]{Yanagihara_Weakly_normal_ring_extensions}}]\label{thm:structure_of_seminormal_ext}
Let $A\subset B$ be a finite extension of Noetherian rings. Then $A$ is seminormal (resp. weakly normal) in $B$ if and only if then there exists a finite sequence of finite extensions
		$$A=A_0\subset A_1\subset \dots \subset A_n=B$$
where $A_{i}$ is the gluing (resp. the weak gluing) of $A_{i+1}$ over a prime ideal of $A_i$.
\end{theorem}

\begin{lemma}\label{lemma:nodes_are_weakly_normal}
Let $(R,\fm)$ be an excellent nodal local ring. Then $R$ is seminormal in its normalization $R^\nu$, and $R$ is the gluing of $R^\nu$ over $\fm\subset R$. Furthermore, $R$ is weakly normal in $R^\nu$ if and only if $R$ is a separable node (see \cite[Definition 3.2.4]{Posva_Gluing_for_surfaces_and_threefolds}).
\end{lemma}
\begin{proof}
By \autoref{lemma:properties_of_nodes} the conductor of $R\subset R^\nu$ is a radical ideal of $R^\nu$. Thus by \cite[Appendix I.7.2.5]{Kollar_Rational_curves} we obtain that $R$ is seminormal in $R^\nu$ if and only if $k(\fm)$ is seminormal in $R^\nu/\fm R^\nu$. This is true as the latter ring is a two-dimensional $k(\fm)$-vector space \cite[\S 3.2]{Posva_Gluing_for_surfaces_and_threefolds}.

Let $R^+$ be the gluing of $R^\nu$ over $\fm\subset R$. By construction $R\subset R^+$ induces an homeomorphism of spectra and isomorphisms on residue fields. Thus $R=R^+$ by seminormality.

In particular we have a pullback diagram
		\begin{equation}\label{eqn:gluing_diagram_for_nodes}
		\begin{tikzcd}
		R\arrow[r]\arrow[d] & R^\nu \arrow[d]\\
		k(\fm) \arrow[r] & R^\nu/\fm R^\nu.
		\end{tikzcd}
		\end{equation}
By definition $R/\fm R$ is a purely inseparable field extension of $k(\fm)$ if and only if $R$ is an inseparable node. Thus $R$ is the weak gluing of $R^\nu$ above $\fm\subset R$ if and only if $R$ is a separable node.
\end{proof}

\begin{corollary}
Let $X$ be an excellent demi-normal scheme with normalization $\pi\colon \bar{X}\to X$. Then $X$ is seminormal (in $\bar{X}$). Moreover $X$ is weakly normal (in $\bar{X}$) if and only if $X$ has only separable nodes.
\end{corollary}
\begin{proof}
The statements are true in codimension one by \autoref{lemma:nodes_are_weakly_normal}. The $S_2$ property of $X$ implies that they hold globally, see \cite[\S 1, Theorem 1]{Yanagihara_Weakly_normal_ring_extensions}.
\end{proof}

\begin{corollary}\label{corollary:descend_section_by_gluing}
Let $X$ be an excellent demi-normal scheme with normalization $\pi\colon \bar{X}\to X$. A section $s\in \pi_*\sO_{\bar{X}}$ belongs to $\sO_X$ if and only if, for every codimension one point $\eta\in X$, we have
				$$s|_{\bar{X}_\eta}\in \im \left[ k(\eta)\to H^0(\bar{X}_\eta,\sO_{\bar{X}_\eta})\right].$$
\end{corollary}
\begin{proof}
Let $s\in \pi_*\sO_{\bar{X}}$ be a section satisfying the restriction condition of the statement. Looking at the pullback diagram \autoref{eqn:gluing_diagram_for_nodes}, with $R=\sO_{X,\eta}$ for an arbitrary codimension one point $\eta$, we deduce that $s\in \sO_{X,\eta}$. Since $X$ is $S_2$ we obtain that $s\in \sO_X$. The converse is clear.
\end{proof}

\subsubsection{Slc compactification for surfaces}
It is an interesting and difficult question in general whether a quasi-projective pair with slc singularities can be embedded into a projective one. It is easy to embed a quasi-projective demi-normal scheme into a projective one (see \cite[3.7.1]{Posva_Gluing_for_surfaces_and_threefolds}), so the difficulty comes from the $\bQ$-Cartier condition on the log canonical divisor. For lc pairs in characteristic zero, this is achieved in \cite[Corollary 1.2]{Hacon_Xu_Existence_of_lc_closures}. We shall need the simple case of slc surfaces over an excellent DVR: we present it now.

\begin{notation}
In what follows, we let $T$ be the spectrum of an excellent DVR, with closed point $\pi$ and residue characteristic different from $2$. (The case of residue characteristic $2$ can be treated in the same way, taking in account the inseparable nodes \cite[\S 3.2]{Posva_Gluing_for_surfaces_and_threefolds} that may appear. We exclude it to avoid additional complications, since we will stay away from characteristic $2$ for most of this article.)
\end{notation}

\begin{lemma}\label{lemma:quotient_surfaces}
Let $S$ be a surface that is quasi-projective over $T$, and $R\hookrightarrow S\times_T S$ a finite equivalence relation. Then the geometric quotient $S/R$ exists (as a scheme), and is a surface that is of finite type over $T$. If $S$ is proper over $T$, then so is $S/R$.
\end{lemma}
\begin{proof}
If $T$ is of equicharacteristic, the fact that the geometric quotient $S\to S/R$ exists (as an algebraic space) follows either from \cite[Theorem 6]{Kollar_Quotients_by_finite_equivalence_relations} or \cite[Proposition 33]{Kollar_Quotients_by_finite_equivalence_relations}. If $T$ is of mixed characteristic, this follows from \cite[Theorem 1.4]{Witaszek_Keel_theorem_and_quotients_in_mixed_char} together with the existence of the quotient for the generic fiber, which follows from \cite[Proposition 33]{Kollar_Quotients_by_finite_equivalence_relations}. The fact that the algebraic space $S/R$ is actually a scheme follows from \cite[9.31]{Kollar_Singularities_of_the_minimal_model_program}.

By the universal property of quotients, the morphism $S\to T$ factors through $S/R$. Since $S$ is of finite type over $T$ and finite over $S/R$, we obtain that $S/R$ is of finite type over $T$ \cite{Artin_Tate_Note_on_finite_extensions}. Properness descends the quotient by \cite[09MQ, 03GN]{StacksProject}.
\end{proof}

\begin{remark}
In general, even if $S$ is projective over $T$, we cannot expect the same property for $S/R$: see \cite[Example 14]{Kollar_Quotients_by_finite_equivalence_relations}.
\end{remark}

\begin{lemma}\label{lemma:quotient_surface_mixed_char}
Let $S$ be a surface that is flat and quasi-projective over $T$. Let $D+\Delta$ be a boundary on $S$, where $D$ has coefficients one, such that $(S,D+\Delta)$ is lc. Let $\tau$ be a log involution of the pair $(\bar{D}^n,\Diff_{D^n}(\Delta))$ over $T$. Then:
	\begin{enumerate}
		\item The equivalence relation $R(\tau)\rightrightarrows S$ is finite.
		\item If $q\colon S\to S/R(\tau)=S_q$ is the quotient morphism, then $(S_q,\Delta_q=q_*\Delta)$ is an slc surface of finite type over $T$.
		\item $(S,D+\Delta,\tau)$ is the normalization of $(S_q,\Delta_q)$ in the sense of \autoref{section:demi_normal_and_slc}.
	\end{enumerate}
\end{lemma}
\begin{proof}
The fact that $R(\tau)\rightrightarrows S$ is finite can be proved as in \cite[Proof of Theorem 4.1.1]{Posva_Gluing_for_surfaces_and_threefolds}. In that reference it is assumed that $S$ is defined over a field, but this is not necessary to obtain the finiteness of $R(\tau)$. It follows from \autoref{lemma:quotient_surfaces} that the geometric quotient $q\colon S\to S/R(\tau)=S_q$ exists.

The fact that $S_q$ is demi-normal with normalization $(S,D,\tau)$ follows from \cite[3.4.1]{Posva_Gluing_for_surfaces_and_threefolds}. Once again, it is assumed there that $S$ is defined over a field: this is not necessary for the proof if we already know that the quotient exists. The fact that $(S_q,\Delta_q)$ is slc follows from \cite[3.2.7]{Posva_Gluing_for_surfaces_and_threefolds} and the fact that $(S,D+\Delta)$ is slc.
\end{proof}

\begin{proposition}\label{prop:compactification_slc_surface}
Let $S_0\to T$ be a flat quasi-projective morphism from a demi-normal surface, and assume $\Delta_0$ is a boundary such that $(S_0,\Delta_0+S_{0,\pi})$ is slc.

Then there exist a proper morphism $S\to T$ from a demi-normal surface $S$, a boundary $\Delta$ such that $(S,\Delta+S_\pi)$ is slc, and an open immersion $(S_0,\Delta_0)\hookrightarrow (S,\Delta)$ over $T$.
\end{proposition}
\begin{proof}
Following the proof of \cite[Theorem 3.7.1]{Posva_Gluing_for_surfaces_and_threefolds} and using \autoref{lemma:quotient_surfaces} at the appropriate places, we can embed $S_0$ inside a demi-normal surface $S'$ that is projective over $T$, and assume that the singular codimension one points of $S'$ are contained in $S_0$. 

We take the normalization of $S'$ and obtain a normal surface $\bar{S}$ that is proper over $T$, and that contains the normalization $S_0^n$ as an open subset. If $\bar{D}\subset \bar{S}$ and $D_0\subset S_0^n$ are the conductor divisors, then $\bar{D}$ is the closure of $D_0$. Let $\bar{\Delta}$ be the closure in $\bar{S}$ of the strict transform of $\Delta_0$ in $S_0^n$. By blowing-up repeatedly on the complement of $S_0^n$, we may assume that $(\bar{S},\bar{\Delta}+\bar{S}_\pi)$ is log canonical in a neighbourhood of $\bar{S}\setminus S_0^n$ (this is achievable by \cite[Theorem 2.25]{Kollar_Singularities_of_the_minimal_model_program}).

By construction, the normal proper log curve $(\bar{D}^n,\Diff_{\bar{D}^n}(\bar{\Delta}+\bar{S}_\pi))$ is endowed with a birational involution $\tau_0\colon \bar{D}^n\dashrightarrow \bar{D}^n$, that (co)-restricts to a solid involution of the dense open set $U:=\bar{D}^n\times_{\bar{S}}S_0^n$. Moreover $\tau_0$ preserves the divisor $\Diff_{\bar{D}^n}(\bar{\Delta}+\bar{S}_\pi)\cap U$. Since $\bar{D}^n$ is a normal proper curve, the birational self-map $\tau_0$ extends uniquely to an involution $\tau$ of $\bar{D}^n$. We may blow-up $\bar{S}$ even more, so that $(\bar{\Delta}+ \bar{S}_\pi)\cap \bar{D}$ is contained in $S_0^n$. If we do so, then $\tau$ becomes an involution of the log pair $(\bar{D}^n,\Diff_{\bar{D}^n}(\bar{\Delta}+\bar{S}_\pi))$.

By \autoref{lemma:quotient_surface_mixed_char}, we may glue the lc pair $(\bar{S},\bar{D}+\bar{\Delta}+\bar{S}_\pi)$ along the involution $\tau$, and obtain an slc surface pair $(S,\Delta+S_\pi)$ that is proper over $T$. Quotients are Zariski-local \cite[Corollary 9.11]{Kollar_Singularities_of_the_minimal_model_program}, so the pair $(S_0,\Delta_0+S_{0,\pi})$ admits an open embedding into $(S,\Delta+S_\pi)$.
\end{proof}

\section{Gluing for families of surfaces}

The goal of this section is the proof of \autoref{theorem:gluing_for_surface_families}. We consider the following situation (where, compared to \autoref{theorem:gluing_for_surface_families}, we modify slightly our notations):
\begin{notation}\label{notation:slc_family}
We let:
	\begin{itemize}
		\item $R$ be an excellent DVR of mixed characteristic $(0,p>0)$;
		\item $X\to \Spec R$ be a flat proper morphism with connected fibers from an equidimensional normal scheme $X$ of any dimension;
		\item $D$ be an effective reduced Weil divisor on $X$ and $\Delta$ be an effective $\mathbb{Q}$-Weil divisor on $X$, and we assume that $(X,D+\Delta+X_\pi)$ is lc. In particular, $X_\pi$ is reduced and has no component contained in the support of $D+\Delta$.
		\item We denote by $\bar{D}\to D$ the normalization of $D$, by $\bar{D}_\pi$ the special fiber of $\bar{D}\to\Spec R$ and by $\bar{D}_\pi^n$ its normalization.
		\item Finally, we assume that there exists an involution $\tau$ of the log pair $(\bar{D}, \Diff_{\bar{D}}\Delta)$ over $\Spec R$. 
	\end{itemize}
\end{notation}

Our goal is to prove that $X/R(\tau)$ exists as a scheme and has an slc structure.

\subsection{Existence of the quotient}\label{section:Existence_of_quotient}

To begin with, we prove that $X/R(\tau)$ exists as a scheme.

\begin{lemma}\label{lemma:involution_respects_fibration}
Let $Z\to \Spec R$ be a morphism and $\tau$ an involution of $Z$ over $\Spec R$. Then $\tau$ restricts to involutions of $Z_\pi$, $\red(Z_\pi)$ and $Z\otimes_{R} \Frac(R)$.
\end{lemma}
\begin{proof}
The ideal of $Z_\pi$ is $\pi \sO_Z$, so by looking at the exact sequence 
		$$0\to \pi\sO_Z\to \sO_Z\to \sO_{Z_\pi}\to 0$$
we see that $\tau$ restricts to an involution of $Z_\pi$ if and only if $\tau(\pi)\subseteq \pi\sO_Z$. But $\tau(\pi)=\pi$, so this is immediate. So $\tau$ descends to $Z_\pi$, and thus to $\red(Z_\pi)$. Since $\tau(\pi)=\pi$, it also holds that $\tau$ descends to an involution of $\sO_Z\otimes_R R[1/\pi]=\sO_{Z\otimes_{ R} \Frac(R)}$.
\end{proof}

\begin{lemma}\label{lemma:lc_center_on_families}
Let $(X,D+\Delta)\to \Spec R$ be as in \autoref{notation:slc_family}. If $E$ is a divisor over $X$ whose center $c_X(E)$ belongs to the special fiber $X_\pi$, then $a(E;X,D+\Delta)\geq 0$.
\end{lemma}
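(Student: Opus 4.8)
The plan is to compare the pair $(X,D+\Delta)$ with the log canonical pair $(X,D+\Delta+X_\pi)$ on a common model and to exploit the fact that the center of $E$ lies in the special fiber.

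First I would record that, since $X$ is flat over the DVR $R$ and $\pi$ is a non-zero-divisor on $\sO_X$, the special fiber is the principal (hence Cartier) divisor $X_\pi=\opdiv_X(\pi)$. Now choose a proper birational morphism $f\colon Y\to X$ from a normal variety that extracts $E$, and write the two adjunction identities
$$K_Y+\Delta_Y=f^*(K_X+D+\Delta),\qquad K_Y+\Delta'_Y=f^*(K_X+D+\Delta+X_\pi),$$
where, by the convention fixed in the preliminaries, the coefficient of $E$ in $\Delta_Y$ (resp. $\Delta'_Y$) is $-a(E;X,D+\Delta)$ (resp. $-a(E;X,D+\Delta+X_\pi)$). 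Subtracting the two identities gives $\Delta'_Y-\Delta_Y=f^*X_\pi=\opdiv_Y(\pi)$, and reading off the coefficient of $E$ yields the key identity
$$a(E;X,D+\Delta)=a(E;X,D+\Delta+X_\pi)+\ord_E(\pi).$$

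The next step is the only place where the hypothesis $c_X(E)\subseteq X_\pi$ is used: since the generic point $\eta=c_X(E)$ of the center lies on $X_\pi$, we have $\pi\in\fm_{X,\eta}$, so the divisorial valuation $\ord_E$ is strictly positive on $\pi$; being integer-valued, $\ord_E(\pi)\geq 1$. Combining this with the log canonicity of $(X,D+\Delta+X_\pi)$, which gives $a(E;X,D+\Delta+X_\pi)\geq -1$, the identity above yields $a(E;X,D+\Delta)\geq -1+1=0$, as desired.

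This argument is essentially a formal discrepancy computation, so I do not expect a serious obstacle. The only points requiring care are verifying that $X_\pi$ is Cartier, so that its pullback makes sense as $f^*X_\pi=\opdiv_Y(\pi)$ with the coefficient of $E$ equal to $\ord_E(\pi)$ (both consequences of flatness of $X$ over $R$), and checking that $\ord_E(\pi)\geq 1$ rather than merely $\geq 0$, which is exactly where the containment of the center in the special fiber enters.
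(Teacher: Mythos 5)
Your argument is exactly the paper's proof (which follows Koll\'ar's \emph{Families of varieties of general type}, 2.14): the paper writes $b_E:=\coeff_E f^*X_\pi$, notes it is a positive integer when $c_X(E)\subset X_\pi$, and uses the same identity $a(E;X,D+\Delta+X_\pi)=a(E;X,D+\Delta)-b_E$ together with $a(E;X,D+\Delta+X_\pi)\geq -1$. Your only addition is spelling out that $b_E=\ord_E(\pi)\geq 1$ via $X_\pi=\opdiv_X(\pi)$ being principal, which is a correct and slightly more explicit justification of the same step.
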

\begin{proof}
The proof is the same as \cite[2.14]{Kollar_Families_of_varieties_of_general_type}. Suppose that $E$ appears as a divisor on a proper birational model $\pi\colon Y\to X$. Write $b_E:=\coeff_E\pi^*X_\pi$. Since $\pi^*X_\pi$ is Cartier and effective, $b_E$ is a non-negative integer. If $c_X(E)\subset X_\pi$ then $b_E$ is actually a positive integer. Then:
		$$-1\leq a(E;X,D+\Delta+X_\pi)=a(E;X,D+\Delta)-b_E$$
so $a(E;X,D+\Delta)\geq 0$.
\end{proof}

\begin{lemma}\label{lemma:intersection_has_expected_dim}
In the situation of \autoref{notation:slc_family}, every irreducible component of $D$ dominates $\Spec R$, and the irreducible components of $D\cap X_\pi$ have dimension $\dim D-1$.
\end{lemma}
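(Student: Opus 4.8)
The plan is to reduce both assertions to the flatness of $D$ over $R$ together with the dimension formula for fibers of a flat morphism over a DVR. First I would record that $X_\pi$ is pure of codimension one in $X$: being flat over $R$, the scheme $X_\pi$ is the effective Cartier divisor cut out by $\pi$, and since $X$ is equidimensional, every irreducible component of $X_\pi$ has dimension $\dim X-1$ by \cite[14.97]{Gortz_Wedhorn_AG_I}.

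For the first assertion I would argue by contradiction. Since $\Spec R$ has only two points, an irreducible component $D_0\subseteq D$ either dominates $\Spec R$ or maps to the closed point, in which case $D_0\subseteq X_\pi$ set-theoretically. As $D_0$ is a component of the divisor $D$ it has dimension $\dim X-1$, which is exactly the dimension of every component of $X_\pi$; hence $D_0$ is an irreducible component of $X_\pi$. But $D_0\subseteq \Supp(D)\subseteq\Supp(D+\Delta)$, contradicting the hypothesis in \autoref{notation:slc_family} that $X_\pi$ has no component contained in $\Supp(D+\Delta)$. Therefore every component of $D$ dominates $\Spec R$.

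For the second assertion I would first deduce that $D$ is flat over the DVR $R$. Indeed $\sO_D$ is $\pi$-torsion-free: if $\pi^n s=0$ for some local section $s$, then $s$ vanishes on the generic fiber of $D\to\Spec R$, which is dense in $D$ because every component dominates $\Spec R$; as $D$ is reduced this forces $s=0$. Over a DVR, torsion-freeness is equivalent to flatness. Since $D$ is a reduced Weil divisor on the equidimensional scheme $X$, it is itself equidimensional of dimension $\dim X-1=\dim D$. Applying \cite[14.97]{Gortz_Wedhorn_AG_I} to the flat pure-dimensional scheme $D\to\Spec R$ then shows that every irreducible component of the special fiber $D_\pi$ has dimension $\dim D-1$. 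Since $D_\pi=D\cap X_\pi$, this is exactly the claim.

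The content is elementary once the right reduction is made, and I do not expect a serious obstacle. The only point requiring care is the first step: the subtle hypothesis that $X_\pi$ shares no component with $\Supp(D+\Delta)$ (a consequence of log canonicity of $(X,D+\Delta+X_\pi)$) is precisely what rules out vertical components of $D$, and this is what makes $D$ flat over $R$ and thereby unlocks the dimension count.
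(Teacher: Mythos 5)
Your proof is correct and follows essentially the same route as the paper's: rule out vertical components of $D$ using the lc condition (equivalently, the hypothesis in \autoref{notation:slc_family} that no component of $X_\pi$ lies in $\Supp(D+\Delta)$), deduce flatness of $D\to\Spec R$, and apply \cite[14.97]{Gortz_Wedhorn_AG_I}. The only cosmetic difference is that you verify flatness directly via $\pi$-torsion-freeness where the paper cites \cite[III.9.7]{Hartshorne_Algebraic_Geometry}; the content is identical.
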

\begin{proof}
Since $X$ is flat over $\Spec R$, every component of $X_\pi$ is a divisor by \cite[14.97]{Gortz_Wedhorn_AG_I}. Since $(X,D+\Delta+X_\pi)$ is lc, no component of the boundary has coefficient $>1$. Thus $D$ does not contain any component of $X_\pi$, and so every component of $D$ dominates $\Spec R$ and so $D\to \Spec R$ is flat \cite[III.9.7]{Hartshorne_Algebraic_Geometry}. Applying \cite[14.97]{Gortz_Wedhorn_AG_I} again yields the result.
\end{proof}

\begin{lemma}\label{lemma:special_fiber_is_reduced}
The special fiber $\bar{D}_\pi$ of $\bar{D}\to \Spec R$ is reduced.
\end{lemma}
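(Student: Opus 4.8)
The plan is to verify reducedness of the Cartier divisor $\bar{D}_\pi=V(\pi)\subset\bar{D}$ at each of its generic points and then promote this to global reducedness. First I would record that $\bar{D}\to\Spec R$ is flat: being normal, $\bar{D}$ is reduced, so its associated points are the generic points of its irreducible components, and each such component lies finitely over a component of $D$, which dominates $\Spec R$ by \autoref{lemma:intersection_has_expected_dim}; hence no associated point of $\bar{D}$ maps to the closed point of $\Spec R$, so $\pi$ is a nonzerodivisor and $\bar{D}_\pi$ is a Cartier divisor. Since $\bar{D}$ is normal it is $S_2$, and cutting by the nonzerodivisor $\pi$ produces an $S_1$ scheme $\bar{D}_\pi$ (the depth drops by exactly one). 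Thus $\bar{D}_\pi$ has no embedded points, and by Serre's criterion it suffices to establish $R_0$, i.e. reducedness at a generic point $\eta$ of $\bar{D}_\pi$.

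Next I would analyse the local structure at such an $\eta$. Let $x\in X$ be the image of $\eta$ under $\bar{D}\to D\hookrightarrow X$. As $\eta$ has codimension one in $\bar{D}$ and $\bar{D}\to D$ is finite, the image of $\eta$ in $D$ has codimension one in $D$, so $x$ is a codimension-two point of $X$; moreover $x\in\Supp(X_\pi)$ (since $\eta$ lies over the closed point of $\Spec R$) and $x\in\Supp(D)$, so $x$ is a generic point of $D\cap X_\pi$ by \autoref{lemma:intersection_has_expected_dim}. I then apply the coarse classification of codimension-two lc singularities \cite[2.31]{Kollar_Singularities_of_the_minimal_model_program} to the lc pair $(X,D+\Delta+X_\pi)$ at $x$. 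Because $D$ and $X_\pi$ both pass through $x$ and share no common component (again by \autoref{lemma:intersection_has_expected_dim}), the reduced divisor $\lfloor D+\Delta+X_\pi\rfloor$ has at least two distinct branches at $x$, so it cannot be regular and must be nodal. Hence at $x$ the ambient $X$ is regular and the boundary is exactly the node formed by $D$ and $X_\pi$: each of $D$ and $X_\pi$ is a regular branch, the two meet transversally, and no further boundary component passes through $x$.

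The regularity of $D$ at $x$ means $\bar{D}\to D$ is an isomorphism near $\eta$, so $\bar{D}_\pi$ agrees with $D_\pi=X_\pi\cap D$ there. Since $X_\pi=V(\pi)$ is reduced, $\pi$ cuts out the smooth branch $X_\pi$ with its reduced (prime) structure at $x$; combined with the transversality of the node this shows $X_\pi\cap D$ is regular of codimension two in the regular local ring $\mathcal{O}_{X,x}$, hence reduced at $\eta$. This yields $R_0$ for $\bar{D}_\pi$ and completes the argument. The step I expect to be the main obstacle is the middle one: one must confirm that the node of $\lfloor D+\Delta+X_\pi\rfloor$ is formed precisely by the branches $D$ and $X_\pi$ — this is exactly where finiteness of $\bar{D}\to D$ and the absence of common components are needed to rule out that $D$ is itself the nodal curve — and then translate the transversality of the node together with the reducedness of $X_\pi$ into the scheme-theoretic reducedness of the restriction $X_\pi|_D$.
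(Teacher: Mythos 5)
Your proof is correct and follows essentially the same route as the paper's: reduce to $S_1$ plus generic reducedness, note that the generic points of $\bar{D}_\pi$ lie over generic points of $D\cap X_\pi$, apply the classification of codimension-two lc singularities to conclude that $D$ and $X_\pi$ are regular and transversal there, and deduce that $\bar{D}\to D$ is an isomorphism near those points with $\bar{D}_\pi$ regular. The extra details you supply (flatness of $\bar{D}\to\Spec R$ via \autoref{lemma:intersection_has_expected_dim}, and the branch-count showing the node is formed exactly by $D$ and $X_\pi$) are correct and only make explicit what the paper leaves implicit.
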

\begin{proof}
Since $\bar{D}$ is normal and $\bar{D}_\pi$ is an hypersurface, $\bar{D}_\pi$ is $S_1$. Thus we only need to show that $\bar{D}_\pi$ is generically reduced.

The generic points of $\bar{D}_\pi$ dominates the generic points of the intersection $D\cap X_\pi$, and by \autoref{lemma:intersection_has_expected_dim} these points have codimension two in $X$. Since $(X,D+\Delta+X_\pi)$ is lc, the classification of codimension two lc singularities \cite[Corollary 2.32]{Kollar_Singularities_of_the_minimal_model_program} shows that around the generic points of $D\cap X_\pi$, the divisors $D$ and $X_\pi$ are regular and meet transversally. Hence $\bar{D}\to D$ is an isomorphism above those generic points, with $\bar{D}_\pi$ isomorphic to the regular $D\cap X_\pi$. This shows that $\bar{D}_\pi$ is generically reduced.
\end{proof}

\begin{lemma}\label{lemma:extension_of_involution}
The involution $\tau$ of $(\bar{D}, \Diff_{\bar{D}}\Delta)$ induces an involution $\sigma$ of the lc pair $(\bar{D}_\pi^n,\Gamma)$, where $\Gamma$ is defined by the adjunction formula $(K_{\bar{D}}+\Diff_{\bar{D}}(\Delta)+\bar{D}_\pi)|_{\bar{D}_\pi^n}=K_{\bar{D}_\pi^n}+\Gamma$.

Moreover, the equivalence relation $R_{X_\pi}(\sigma)\rightrightarrows X_\pi$ is equal to the restriction of the equivalence relation $R_X(\tau)\rightrightarrows X$ to $X_\pi$.
\end{lemma}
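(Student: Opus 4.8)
The plan is to handle the two assertions in turn: first build $\sigma$ by restricting $\tau$ and check it preserves the different, then recognize $R_{X_\pi}(\sigma)$ as the ``vertical part'' of $R_X(\tau)$.

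To construct $\sigma$, note that since $\tau$ is an $R$-involution of $\bar D$ it fixes $\pi$, so by \autoref{lemma:involution_respects_fibration} it restricts to an involution of the special fibre $\bar D_\pi=V(\pi)$, which is reduced by \autoref{lemma:special_fiber_is_reduced}; by functoriality of normalization this lifts to an involution $\sigma$ of $\bar D_\pi^n$. To obtain $\Gamma$ I would apply adjunction twice: first along $D$, observing that the Cartier divisor $X_\pi=V(\pi)$ restricts to the reduced divisor $\bar D_\pi$ and hence contributes exactly $\bar D_\pi$ to the different, so that $(X,D+\Delta+X_\pi)$ being lc yields the lc pair $(\bar D,\Diff_{\bar D}\Delta+\bar D_\pi)$; then along $\bar D_\pi$, producing the lc pair $(\bar D_\pi^n,\Gamma)$ through the stated formula. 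Finally, $\tau$ preserves $K_{\bar D}+\Diff_{\bar D}\Delta$ (being an involution of that pair) and $\bar D_\pi$ (being defined over $R$), hence preserves $K_{\bar D}+\Diff_{\bar D}\Delta+\bar D_\pi$; as $\sigma$ is the restriction of $\tau$ and adjunction is canonical, pulling the defining formula back by $\sigma$ gives $\sigma^*\Gamma=\Gamma$, so $\sigma$ is a log involution of $(\bar D_\pi^n,\Gamma)$.

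For the identification of the relations, write $f\colon\bar D\to X$ for the normalization followed by the inclusion and $g\colon\bar D_\pi^n\to X_\pi$ for its special-fibre analogue, so that $g$ factors through $\bar D_\pi$ and $\sigma$ lifts $\tau|_{\bar D_\pi}$. By definition $R_X(\tau)$ is generated by $Z:=(f\times f)(\Delta_{\bar D/R}\cup\Gamma_\tau)$ and $R_{X_\pi}(\sigma)$ by $W:=(g\times g)(\Delta_{\bar D_\pi^n/k(\pi)}\cup\Gamma_\sigma)$, while $X_\pi\times_{k(\pi)}X_\pi=(X\times_R X)_\pi$, so restricting $R_X(\tau)$ to $X_\pi$ just means intersecting with this fibre. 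The observation driving the proof is that for a geometric point $d$ of $\bar D$ one has $f(d)\in X_\pi$ if and only if $d\in\bar D_\pi$: since $f$ is an $R$-morphism, $\pi$ vanishes at $f(d)$ exactly when it vanishes at $d$. Because $\sigma$ lifts $\tau|_{\bar D_\pi}$ and $\bar D_\pi^n\to\bar D_\pi$ is surjective on geometric points, this at once gives $Z\cap(X_\pi\times X_\pi)=W$ as sets of geometric points, whence the inclusion $R_{X_\pi}(\sigma)\subseteq R_X(\tau)|_{X_\pi}$ is formal (the right-hand side is an equivalence relation on $X_\pi$ containing $W$).

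The reverse inclusion is the only real point. I would take a geometric point $(x,y)$ of $R_X(\tau)$ with $x,y\in X_\pi$, witnessed by a $Z$-chain $x=x_0\sim\cdots\sim x_m=y$ in which each step carries some $f(d)$ to $f(d)$ or $f(\tau d)$. Starting from $x_0\in X_\pi$, the point $d$ realizing the first step lies in $\bar D_\pi$ by the observation above, hence $\tau d\in\bar D_\pi$ and $x_1\in X_\pi$; inductively the entire chain remains in $X_\pi$ and every step is realized by a point of $\bar D_\pi$, that is, by $W$. Therefore $(x,y)\in R_{X_\pi}(\sigma)$ and the two relations coincide. I expect this last step to be the main obstacle: a priori a chain identifying two points of the special fibre could pass through $X\setminus X_\pi$, and it is precisely the verticality of $\tau$ over $R$ — equivalently, that $f$ is an $R$-morphism — that forbids this and confines the whole computation to the special fibre.
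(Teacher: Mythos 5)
Your proposal is correct and follows essentially the same route as the paper: restrict $\tau$ to the special fibre via \autoref{lemma:involution_respects_fibration} and \autoref{lemma:special_fiber_is_reduced}, lift to $\bar{D}_\pi^n$ by the universal property of normalization, deduce $\sigma$-invariance of $\Gamma$ from $\tau$-invariance of $K_{\bar D}+\Diff_{\bar D}\Delta$ and of $\bar D_\pi$, and identify the two relations via the commutative lifting square. Your explicit chain argument for the identification of $R_{X_\pi}(\sigma)$ with $R_X(\tau)|_{X_\pi}$ is a careful spelling-out of what the paper dismisses as ``clear from the construction,'' and is a welcome addition rather than a divergence.
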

\begin{proof}
By \autoref{lemma:involution_respects_fibration} and \autoref{lemma:special_fiber_is_reduced} the involution $\tau$ descends to an involution $\sigma'$ on $\bar{D}_\pi$. By the universal property of normalization, we obtain an involution $\sigma$ of the normalization $\bar{D}_\pi^n$ that makes the diagram
		\begin{equation}\label{eqn:lift_of_involution_to_normalization}
		\begin{tikzcd}
		\bar{D}^n_\pi \arrow[d]\arrow[r, "\sigma"] & \bar{D}^n_\pi\arrow[d] \\
		\bar{D}_\pi\arrow[r, "\sigma'"] & \bar{D}_\pi
		\end{tikzcd}
		\end{equation}
	commutative.

Since the $\bQ$-Cartier divisors $K_{\bar{D}}+\Diff_{\bar{D}}\Delta$ and $\bar{D}_\pi$ are $\tau$-invariant, so is their sum. Hence the pullback of $K_{\bar{D}}+\Diff_{\bar{D}}+\bar{D}^n_\pi$ to $\bar{D}_\pi^n$ is $\sigma$-invariant. The pair $(\bar{D}_\pi^n,\Gamma)$ is lc by adjunction \cite[Lemma 4.8]{Kollar_Singularities_of_the_minimal_model_program}. It is clear from the construction that $R(\sigma')\rightrightarrows X_\pi$ is equal to the restriction of $R_X(\tau)$ to $X_\pi$, so we only need to compare $R_{X_\pi}(\sigma)$ and $R_{X_\pi}(\sigma')$. Since \autoref{eqn:lift_of_involution_to_normalization} is commutative, we see that the two equivalence relations are the same.
\end{proof}

\begin{lemma}\label{lemma:adjunction_route_along_special_fiber}
The special fiber $X_\pi$ is reduced, regular at the generic points of $D\cap X_\pi$ and at worst nodal at other codimension one points. Moreover if $X_\pi^n\to X_\pi$ is the normalization morphism then:
	\begin{enumerate}
		\item it is an isomorphism over the generic points of $D\cap X_\pi$,
		\item the pair $(X_\pi^n,\Diff_{X_\pi^n}(D+\Delta))$ is lc, and
		\item the strict transform of $D\cap X_\pi$ is contained in $\lfloor \Diff_{X_\pi^n}(D+\Delta)\rfloor$.
	\end{enumerate}
\end{lemma}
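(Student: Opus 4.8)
The plan is to read off the entire statement from the local structure of the lc pair $(X,D+\Delta+X_\pi)$ at codimension two points of $X$, together with adjunction along the Cartier divisor $X_\pi$. No new input is needed beyond the coarse classification of codimension two lc singularities and the standard different formalism.

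First I would fix a codimension one point $x$ of $X_\pi$; this is a codimension two point of $X$, so the classification of codimension two lc singularities \cite[2.31]{Kollar_Singularities_of_the_minimal_model_program} applies to $(X,D+\Delta+X_\pi)$ and shows that $\lfloor D+\Delta+X_\pi\rfloor$ is either regular or nodal at $x$. Since $X_\pi$ has coefficient $1$, it is a summand of this round-down, and I distinguish two cases. If $x$ is a generic point of $D\cap X_\pi$ (by \autoref{lemma:intersection_has_expected_dim} these are exactly the codimension one points of $X_\pi$ lying on $D$), then the round-down contains the two distinct branches $D$ and $X_\pi$; hence it must be nodal, which forces $D$ and $X_\pi$ to be regular and to meet transversally at $x$. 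In particular $X_\pi$ is regular there, which gives the regularity assertion and, since normalization is an isomorphism over regular points, also statement (1). If instead $x\notin D$, the round-down near $x$ contains $X_\pi$ as one of its at most two branches, so $X_\pi$ is at worst nodal at $x$. Reducedness then follows exactly as in \autoref{lemma:special_fiber_is_reduced}: $X_\pi$ is $S_1$, being a Cartier divisor in the normal scheme $X$, and it is generically reduced because the lc hypothesis forbids boundary coefficients exceeding $1$, so $X_\pi$ has multiplicity one along each codimension one point of $X$. This settles the first sentence.

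For statement (2) I would invoke adjunction directly. Since $X_\pi$ is a reduced Cartier divisor appearing with coefficient $1$ in the lc pair $(X,D+\Delta+X_\pi)$, adjunction along $X_\pi$ \cite[4.2, 5.7]{Kollar_Singularities_of_the_minimal_model_program} produces the lc pair $(X_\pi^n,\Diff_{X_\pi^n}(D+\Delta))$, characterized by $(K_X+D+\Delta+X_\pi)|_{X_\pi^n}=K_{X_\pi^n}+\Diff_{X_\pi^n}(D+\Delta)$. For statement (3) I would then compute the coefficient of the strict transform of $D\cap X_\pi$ in this different. By (1) the normalization is an isomorphism over the generic points of $D\cap X_\pi$, so this strict transform agrees there with $D\cap X_\pi$ itself. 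Near such a generic point the triple $(X;D,X_\pi)$ is snc by the first paragraph, so the different is computed by restriction, and the coefficient-one divisor $D$ meets $X_\pi$ transversally; thus $D\cap X_\pi$ occurs in $\Diff_{X_\pi^n}(D+\Delta)$ with coefficient at least $1$. On the other hand the lc-ness from (2) bounds every coefficient of the different by $1$, so the coefficient is exactly $1$ and $D\cap X_\pi$ lies in $\lfloor\Diff_{X_\pi^n}(D+\Delta)\rfloor$.

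The hard part will be the bookkeeping in statement (3): one must check that the transversal contribution of $D$ to the different is not altered by a component of $\Delta$ (it is not, since $\Delta$ shares no component with $D$) and that the different is genuinely computed by restriction in the snc locus, for which I would rely on the explicit adjunction formulas of \cite[\S 4.1]{Kollar_Singularities_of_the_minimal_model_program}. Everything else is a direct reading of the codimension two classification and of adjunction, both of which hold in the required excellent, mixed-characteristic generality.
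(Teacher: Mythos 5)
Your proof is correct and follows essentially the same route as the paper's (much terser) argument: the classification of codimension-two lc singularities for the local structure of $X_\pi$ along $D\cap X_\pi$, $S_1$ plus generic reducedness for reducedness, adjunction for (2), and a local coefficient computation at the generic points of $D\cap X_\pi$ for (3). The only cosmetic quibble is that the reason a component of $\Delta$ cannot inflate the coefficient of $D\cap X_\pi$ in the different is not that $\Delta$ shares no component with $D$ (a component of $\Delta$ could still contain a component of $D\cap X_\pi$), but --- as you in fact use in the very next sentence --- that lc-ness of the adjoint pair caps that coefficient at $1$.
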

\begin{proof}
The hypersurface $X_\pi$ of $X$ is $S_1$. It is regular at the generic points of $D\cap X_\pi$ and at worst nodal at other codimension one points by \cite[Corollary 2.32]{Kollar_Singularities_of_the_minimal_model_program}. To decide whether the strict transform of $D\cap X_\pi$ appears in $\lfloor \Diff_{X_\pi^n}(D+\Delta)\rfloor$ is a local question around the generic points of $D\cap X_\pi$. So the first and third points hold. The pair $(X_\pi^n,\Diff_{X_\pi^n}(D+\Delta))$ is lc by adjunction.
\end{proof}

\begin{lemma}\label{lemma:two_routes_give_the_same_pair}
Denote by $E\subset \lfloor \Diff_{X_\pi^n}(D+\Delta)\rfloor$ the strict transform of $D\cap X_\pi$, and by $(E^n,\Theta)$ the lc pair obtained from $(X_\pi^n,\Diff_{X_\pi^n}(D+\Delta))$ by adjunction. Then there exists a log isomorphism $f\colon (E^n,\Theta)\cong (\bar{D}_\pi^n,\Gamma)$ such that the diagram
		\begin{equation}\label{eqn:crepant_diagram}
		\begin{tikzcd}
		E^n \arrow[rr, "f"]\arrow[dr] && \bar{D}^n_\pi \arrow[dl] \\
		& X &
		\end{tikzcd}
		\end{equation}
commutes.
\end{lemma}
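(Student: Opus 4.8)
The plan is to first construct the scheme isomorphism $f$, and then upgrade it to a log isomorphism by showing that the two differents $\Theta$ and $\Gamma$ are computed from one and the same $\mathbb{Q}$-Cartier divisor on $X$ along one and the same codimension-two stratum, so that the order in which the two adjunctions are performed is immaterial.

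First I would identify all the normalizations in sight. By \autoref{lemma:adjunction_route_along_special_fiber}(1) the normalization $X_\pi^n\to X_\pi$ is an isomorphism over the generic points of $D\cap X_\pi$, so the strict transform $E$ maps finitely and birationally onto $D\cap X_\pi$; hence $E^n$ is canonically the normalization $(D\cap X_\pi)^n$. On the other side, $\bar{D}=D^n\to D$ is an isomorphism over the generic points of $D\cap X_\pi$ (as observed in the proof of \autoref{lemma:special_fiber_is_reduced}), so $\bar{D}_\pi\to D\cap X_\pi$ is finite and birational; since $\bar{D}_\pi$ is reduced by \autoref{lemma:special_fiber_is_reduced}, its normalization $\bar{D}_\pi^n$ is again $(D\cap X_\pi)^n$. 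Composing these two canonical identifications defines the isomorphism $f\colon E^n\xrightarrow{\sim}\bar{D}_\pi^n$, and since both identifications are compatible with the structure maps to $X$, the triangle \eqref{eqn:crepant_diagram} commutes. In particular $f$ carries the dualizing sheaf of $E^n$ to that of $\bar{D}_\pi^n$, so I may fix compatible canonical divisors $K_{E^n}=f^*K_{\bar{D}_\pi^n}$.

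Next I would write out the two iterated adjunctions as genuine equalities of divisors. Along the special fiber, adjunction gives $(K_X+D+\Delta+X_\pi)|_{X_\pi^n}=K_{X_\pi^n}+\Diff_{X_\pi^n}(D+\Delta)$; since $E$ is a coefficient-one component of $\lfloor\Diff_{X_\pi^n}(D+\Delta)\rfloor$ by \autoref{lemma:adjunction_route_along_special_fiber}(3), a second adjunction along $E$ yields $K_{E^n}+\Theta$. Along $D$ instead, using that $X_\pi$ is Cartier so that its restriction to $\bar{D}$ is exactly $\bar{D}_\pi$, adjunction gives $(K_X+D+\Delta+X_\pi)|_{\bar{D}}=K_{\bar{D}}+\Diff_{\bar{D}}(\Delta)+\bar{D}_\pi$, and a second adjunction along the coefficient-one component $\bar{D}_\pi$ produces $K_{\bar{D}_\pi^n}+\Gamma$, which is precisely the divisor $\Gamma$ of \autoref{lemma:extension_of_involution}. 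All of these are equalities of divisors once the canonical divisors are normalized through the residue maps of \cite[\S 4]{Kollar_Singularities_of_the_minimal_model_program}.

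Finally I would compare the two outputs. Write $\nu\colon (D\cap X_\pi)^n\to X$ for the common finite morphism, which under $f$ is both $E^n\to X$ and $\bar{D}_\pi^n\to X$. Because $K_X+D+\Delta+X_\pi$ is $\mathbb{Q}$-Cartier, its pullback along $\nu$ is functorial, hence independent of whether $\nu$ is factored through $X_\pi^n$ or through $\bar{D}$; so the two iterated adjunctions compute the same divisor, namely $K_{E^n}+\Theta=\nu^*(K_X+D+\Delta+X_\pi)=f^*(K_{\bar{D}_\pi^n}+\Gamma)$. Together with $K_{E^n}=f^*K_{\bar{D}_\pi^n}$ this forces $\Theta=f^*\Gamma$, so $f$ is a log isomorphism. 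I expect the main obstacle to be the bookkeeping of the previous paragraph: one must be careful that adjunction holds at the level of honest divisors and not merely up to $\mathbb{Q}$-linear equivalence, and that the residue normalizations of the canonical divisors on $(D\cap X_\pi)^n$ agree through $f$ regardless of the intermediate divisor. This is exactly where the excellent-over-a-DVR generality of adjunction recorded in \cite[4.2, 5.7]{Kollar_Singularities_of_the_minimal_model_program} is needed.
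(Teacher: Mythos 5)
Your proposal is correct and follows essentially the same route as the paper: both $E^n$ and $\bar{D}_\pi^n$ are identified with $(D\cap X_\pi)^n$ via finite birational morphisms of normal schemes (using \autoref{lemma:special_fiber_is_reduced} and \autoref{lemma:adjunction_route_along_special_fiber}), which gives $f$ and the commuting triangle, and then $\Theta=f^*\Gamma$ because both differents are produced by adjunction from the same $\bQ$-Cartier divisor on $X$. Your second paragraph merely spells out in more detail the step the paper compresses into the sentence ``Since the divisors $\Theta$ and $\Gamma$ are defined by adjunction, we see that $f$ is a log isomorphism.''
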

\begin{proof}
As observed in \autoref{lemma:special_fiber_is_reduced} and \autoref{lemma:adjunction_route_along_special_fiber}, the two morphisms 
		$$E\to D\cap X_\pi \leftarrow \bar{D}_\pi$$
are birational and finite. This induces finite birational morphisms
		$$E^n\to (D\cap X_\pi)^n\leftarrow \bar{D}_\pi^n$$
of normal schemes, so these must be isomorphisms. Thus we obtain an isomorphism $f\colon E^n\cong \bar{D}_\pi^n$ that commutes with the morphisms to $X$. Since the divisors $\Theta$ and $\Gamma$ are defined by adjunction, we see that $f$ is a log isomorphism.
\end{proof}

\begin{proposition}\label{proposition:gluing_exists}
In the situation of \autoref{notation:slc_family}, assume that $X$ is proper over $\Spec R$ and that $K_X+D+\Delta$ is ample over $\Spec R$. Assume also that
	\begin{enumerate}
				\item $\dim X=3$, or
				\item $\dim X=4$, and $X_\pi$ is $S_2$ in a neighborhood of $D\cap X_\pi$ and the residue field of $R$ is perfect of characteristic $>5$.
	\end{enumerate}
Then the quotient $X/R(\tau)$ exists as a demi-normal scheme that is flat and proper over $\Spec R$.
\end{proposition}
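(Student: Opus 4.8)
The plan is to reduce the existence of the quotient to the finiteness of the relation $R(\tau)\rightrightarrows X$ over $\Spec R$, and then to verify finiteness fiber by fiber. Once $R(\tau)$ is known to be a finite set-theoretic equivalence relation and the quotient of the generic fiber exists, Witaszek's theorem \cite[Theorem 1.4]{Witaszek_Keel_theorem_and_quotients_in_mixed_char} produces the quotient $X/R(\tau)$ as an algebraic space of finite type over $\Spec R$; since $K_X+D+\Delta$ is ample over $\Spec R$, the scheme $X$ is projective over $\Spec R$, so by \cite[Corollary 48]{Kollar_Quotients_by_finite_equivalence_relations} the quotient is a scheme $Y$. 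Demi-normality is then part of Kollár's gluing formalism \cite[5.13]{Kollar_Singularities_of_the_minimal_model_program}, properness holds because $q\colon X\to Y$ is finite surjective with $X$ proper over $\Spec R$, and flatness follows because $\sO_Y$ embeds as the subsheaf of $q_*\sO_X$ of sections whose pullbacks to $\bar{D}$ are $\tau$-invariant, hence is $\pi$-torsion-free over the DVR $R$. Thus everything hinges on finiteness, exactly as announced after \autoref{theorem:gluing_for_surface_families}.

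First I would treat the generic fiber. By \autoref{lemma:involution_respects_fibration} the involution $\tau$ restricts to a log involution of $(X_\bQ, (D+\Delta)_\bQ)$, which is an ample lc pair over the characteristic-zero field $\Frac(R)$. There Kollár's gluing theory \cite[5.13]{Kollar_Singularities_of_the_minimal_model_program} applies directly, so the induced relation $R(\tau)_\bQ$ is finite and the geometric quotient of the generic fiber exists. Since finiteness of a set-theoretic equivalence relation over $\Spec R$ is detected on its two fibers, it remains only to show that the restriction of $R(\tau)$ to the special fiber $X_\pi$ is finite.

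The heart of the argument, and the step I expect to be the main obstacle, is this special-fiber finiteness. By \autoref{lemma:extension_of_involution} the restriction of $R_X(\tau)$ to $X_\pi$ coincides with the relation $R_{X_\pi}(\sigma)$ attached to the induced log involution $\sigma$ of $(\bar{D}_\pi^n,\Gamma)$. By \autoref{lemma:two_routes_give_the_same_pair} this pair is log-isomorphic over $X$ to the pair $(E^n,\Theta)$ obtained by adjunction from the lc $k(\pi)$-pair $(X_\pi^n,\Diff_{X_\pi^n}(D+\Delta))$ along the reduced boundary divisor $E\subset\lfloor\Diff_{X_\pi^n}(D+\Delta)\rfloor$ of \autoref{lemma:adjunction_route_along_special_fiber}. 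Passing through the finite birational morphism $X_\pi^n\to X_\pi$, finiteness of $R_{X_\pi}(\sigma)$ is equivalent to finiteness of the relation on the normal variety $X_\pi^n$ generated by a log involution on the reduced boundary of the lc pair $(X_\pi^n,\Diff_{X_\pi^n}(D+\Delta))$ over the positive-characteristic field $k(\pi)$. This is precisely the situation handled by the finiteness results of \cite{Posva_Gluing_for_surfaces_and_threefolds}.

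It then suffices to invoke those results in the appropriate dimension. When $\dim X=3$ the normalized special fiber $X_\pi^n$ is a surface over $k(\pi)$, and finiteness holds unconditionally by the surface gluing theorem of \cite{Posva_Gluing_for_surfaces_and_threefolds}. When $\dim X=4$ the scheme $X_\pi^n$ is a threefold over $k(\pi)$, and here the extra hypotheses enter exactly where the threefold finiteness result of \cite{Posva_Gluing_for_surfaces_and_threefolds} requires them: $k(\pi)$ perfect of characteristic $>5$, and $X_\pi$ being $S_2$ near $D\cap X_\pi$, which guarantees that $X_\pi$ is demi-normal along the conductor so that the gluing theory applies. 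In both cases $R_{X_\pi}(\sigma)$ is finite; combined with the generic-fiber finiteness this shows $R(\tau)$ is finite over $\Spec R$, and the reduction of the first paragraph completes the proof.
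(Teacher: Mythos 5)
Your overall architecture matches the paper's: reduce to finiteness of $R(\tau)$ plus existence of the characteristic-zero quotient, handle the generic fiber by Koll\'ar's gluing theory, and reduce the special fiber to the finiteness results of \cite{Posva_Gluing_for_surfaces_and_threefolds} via \autoref{lemma:extension_of_involution} and \autoref{lemma:two_routes_give_the_same_pair}. The gap is in the sentence claiming that, ``passing through the finite birational morphism $X_\pi^n\to X_\pi$, finiteness of $R_{X_\pi}(\sigma)$ is equivalent to finiteness of the relation on the normal variety $X_\pi^n$.'' This is not automatic: the equivalence relation generated on $X_\pi$ is the transitive closure of the one generated on $X_\pi^n$ together with the identifications made by the (possibly non-injective) map $X_\pi^n\to X_\pi$, and this extra closure step can in principle produce infinite classes even when all classes upstairs are finite --- exactly the phenomenon recalled in \autoref{section:quotients_by_finite_relations}, that achieving transitivity for a pro-finite relation can destroy finiteness. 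The paper's proof devotes a separate paragraph to this descent. In the $\dim X=4$ case the $S_2$ hypothesis on $X_\pi$ near $D\cap X_\pi$ is used precisely here (not, as you suggest, to make $X_\pi$ demi-normal so that the gluing theory applies): combined with the $R_1$ property from \autoref{lemma:adjunction_route_along_special_fiber} it forces $X_\pi^n\to X_\pi$ to be an isomorphism in a neighborhood of $E$, the only locus where the relation is nontrivial, so no new identifications occur and finiteness transfers.

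In the $\dim X=3$ case your argument has nothing to fall back on, since no $S_2$ hypothesis is assumed there. The paper does not pass through $X_\pi^n$ at all in that case: it performs adjunction along $E^n\to X_\pi$ directly on the non-normal surface $X_\pi$ (legitimate by \cite[4.2]{Kollar_Singularities_of_the_minimal_model_program} because $X_\pi$ is reduced, regular in codimension one, and $\omega_{X_\pi}^{[m]}(m(D+\Delta)|_{X_\pi})$ is invertible for $m$ divisible enough), and then uses that $E^n$ is a curve, so the finite $\sigma$-invariant set $\Sigma=\Supp\Theta$ contains all points where $E^n\to D\cap X_\pi$ fails to be an isomorphism; this is what makes the finiteness proof of \cite[4.1.1]{Posva_Gluing_for_surfaces_and_threefolds} run on $X_\pi$ itself. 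You need to supply one of these two arguments (or an alternative) to close the special-fiber step; the remainder of your outline (Witaszek's theorem, the Chevalley--Kleiman property, flatness via $\pi$-torsion-freeness, properness, demi-normality) is consistent with the paper's proof.
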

\begin{proof}
As explained in \autoref{section:quotients_by_finite_relations}, it follows from \cite{Witaszek_Keel_theorem_and_quotients_in_mixed_char} that the geometric quotient $X/R(\tau)$ exists as a scheme as soon as $R(\tau)$ is finite and the geometric quotient $X_\bQ/R(\tau)_\bQ$ exists (notice that $X$ is projective over $\Spec R$ by assumption, so $X/R(\tau)$ is indeed a scheme).

\autoref{lemma:involution_respects_fibration} implies that $\tau$ induces an involution $\tau_\bQ$ on the generic fiber $(\bar{D}_\bQ,\Diff_{\bar{D}_\bQ}(\Delta_\bQ))$ and $R(\tau_\bQ)=R(\tau)|_{X_\bQ}$.

Since $K_X+D+\Delta$ is ample over $\Spec R$, we see that $(X_\bQ,D_\bQ+\Delta_\bQ)$ is a projective lc pair over a field of characteristic zero with ample log canonical divisor a log involution $\tau_\bQ$ on $(\bar{D}_\bQ,\Diff_{\bar{D}_\bQ}(\Delta_\bQ))$. By \cite[Theorem 5.13]{Kollar_Singularities_of_the_minimal_model_program} the quotient $X_\bQ/R(\tau)_\bQ$ exists.

We still have to show that $R(\tau)$ is finite. Since it respects the fibration to $\Spec R$, we only need to show that $R(\tau)|_{X_\pi}$ is finite. By \autoref{lemma:extension_of_involution} it is equivalent to show that $R_{X_\pi}(\sigma)\rightrightarrows X_\pi$ is finite. By \autoref{lemma:two_routes_give_the_same_pair} and the commutativity of \autoref{eqn:crepant_diagram}, we may transport $\sigma$ to an involution of $(E^n,\Theta)$. We are now in situation to apply \cite[4.1.1, 4.1.3, 5.3.2]{Posva_Gluing_for_surfaces_and_threefolds} which shows that $R_{X_\pi^n}(\sigma)$ is finite.

To go from $X_\pi^n$ to $X_\pi$, observe that the equivalence generated by $\sigma$ on $X_\pi^n$, respectively on $X_\pi$, is trivial away from the support of $E$, respectively away from the support of $D\cap X_\pi$. Now $X_\pi$ is $R_1$ is a neigborhood of $D\cap X_\pi$ by \autoref{lemma:two_routes_give_the_same_pair}. If it is also $S_2$  then $X_\pi^n\to X_\pi$ is an isomorphism in a neighborhood of $E$, and we deduce that $R_{X_\pi}(\sigma)$ is finite.

If $\dim X=3$, then we do not need the fact that $X_\pi$ is $S_2$. In this case $X_\pi$ is a reduced surface that is regular in codimension one and such that $\omega_{X_\pi}^{[m]}(m(D+\Delta)|_{X_\pi}))$ is invertible for $m$ sufficiently divisible (this follows from adjunction along $X_\pi\subset X$), hence we can perform adjunction along $E^n\to X_\pi$ even if $X_\pi$ is not normal (see \cite[Definition 4.2]{Kollar_Singularities_of_the_minimal_model_program}). The crucial point is that $E^n$ is a curve, so the points of $E^n$ where $E^n\to D\cap X_\pi$ is not an isomorphism, are contained in $\Supp\Theta$ by \cite[Proposition 4.5.(1)]{Kollar_Singularities_of_the_minimal_model_program}. Thus the the proof of \cite[4.1.1]{Posva_Gluing_for_surfaces_and_threefolds} is also valid in this situation: the finite closed subset $\Sigma:=\Supp\Theta$ is $\sigma$-invariant and $E^n\setminus\Sigma\to E\setminus n(\Sigma)$ is an isomorphism. Thus we obtain finiteness.

Flatness of $X/R(\tau)$ over $\Spec R$ follows from \cite[III.9.7]{Hartshorne_Algebraic_Geometry}.

It remains to show that $X/R(\tau)$ is demi-normal and proper over $\Spec R$. This is shown as in \cite[3.4.1]{Posva_Gluing_for_surfaces_and_threefolds} (which is formulated for schemes over a field, but the proof also works over a DVR).
\end{proof}

\begin{remark}\label{remark:what_do_we_need_for_the_gluing}
More generally, the proof of \autoref{proposition:gluing_exists} applies to $(X,D+\Delta,\tau)$ as soon as we have a gluing theorems for stable lc varieties of dimension $\dim X-1$ above the residue field $k(\pi)$ of $R$, and that the special fiber $X_\pi$ is $S_2$.
\end{remark}

\subsection{Descent of the log canonical sheaf}

We show that in the situation of \autoref{proposition:gluing_exists}, the log canonical $\bQ$-Cartier divisor descends to the quotient. We can actually show it in any dimension, as soon as the quotient exists:

\begin{proposition}\label{proposition:canonical_sheaf_descends}
Let $(Y,\Delta_Y)\to \Spec R$ be a demi-normal flat $\Spec R$-scheme, with induced normalization $(X,\Delta+D,\tau)$. If $(X,D+\Delta+X_\pi)$ is lc and $\Diff_{D^n}(\Delta)$ is $\tau$-invariant, then $K_Y+\Delta_Y$ is $\bQ$-Cartier.
\end{proposition}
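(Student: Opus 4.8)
The plan is to reduce the $\mathbb{Q}$-Cartierness of $K_Y+\Delta_Y$ to the statement that a single pluri-log-canonical line bundle on the normalization $X$ descends along the quotient $p\colon X\to Y$. First I would fix an integer $m$ divisible enough that $m(K_X+D+\Delta)$ is Cartier: this is possible because $(X,D+\Delta+X_\pi)$ is lc, so $K_X+D+\Delta+X_\pi$ is $\mathbb{Q}$-Cartier, and $X_\pi$ is a principal (hence Cartier) divisor, whence $K_X+D+\Delta$ is $\mathbb{Q}$-Cartier. Set $L:=\omega_X^{[m]}(m(D+\Delta))=\mathcal{O}_X(m(K_X+D+\Delta))$, an invertible sheaf on $X$. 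By the structure lemma for demi-normal varieties (its last part; see also \cite[5.7]{Kollar_Singularities_of_the_minimal_model_program}) there is a canonical identification $p^*(\omega_Y^{[m]}(m\Delta_Y))\cong L$. Since $p$ is the geometric quotient of $X$ by the finite equivalence relation $R(\tau)$, and that quotient is local for the flat topology of $Y$ \cite[9.11]{Kollar_Singularities_of_the_minimal_model_program}, it suffices to equip $L$ with an $R(\tau)$-descent datum: the descended invertible sheaf will then coincide with $\omega_Y^{[m]}(m\Delta_Y)$ by uniqueness of descent together with the pullback formula, showing that the latter is invertible and hence that $K_Y+\Delta_Y$ is $\mathbb{Q}$-Cartier.

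Second, I would construct the descent datum. Recall that $R(\tau)$ is generated, as an equivalence relation, by the diagonal of $X$ together with the correspondence $D^n\rightrightarrows X$ given by the two maps $f\colon D^n\to X$ and $f\circ\tau$, where $f$ is the composite $D^n\to D\hookrightarrow X$. Over the diagonal the required isomorphism is tautological, so by \cite[9.10]{Kollar_Singularities_of_the_minimal_model_program} a descent datum amounts to an isomorphism $f^*L\cong(f\circ\tau)^*L=\tau^*(L|_{D^n})$ satisfying the cocycle (involution) condition. By adjunction (\cite[4.2]{Kollar_Singularities_of_the_minimal_model_program}, valid here because $X$ is a quotient of a regular ring) the restriction $L|_{D^n}$ is canonically $\mathcal{O}_{D^n}(m(K_{D^n}+\Diff_{D^n}\Delta))$. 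The functoriality of the dualizing sheaf under the automorphism $\tau$ gives $\tau^*\mathcal{O}_{D^n}(mK_{D^n})\cong\mathcal{O}_{D^n}(mK_{D^n})$, while the hypothesis that $\Diff_{D^n}\Delta$ is $\tau$-invariant gives $\tau^*\mathcal{O}_{D^n}(m\Diff_{D^n}\Delta)\cong\mathcal{O}_{D^n}(m\Diff_{D^n}\Delta)$; combining these produces the desired isomorphism $\tau^*(L|_{D^n})\cong L|_{D^n}$.

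Third --- and this is where the real work lies --- I would verify that this isomorphism is a genuine descent datum over the whole of $R(\tau)$. Because the identification is assembled out of the canonical, functorial isomorphisms of dualizing sheaves, its square is the identity, so it defines an honest involution, which is precisely the cocycle condition on the generating correspondence. The remaining point is to propagate the datum across the loci of codimension $\geq 2$ --- the fixed locus of $\tau$, the singularities of $D$, and the special-fiber locus --- where the generating correspondence fails to be finite étale; here I would use that $L$, its restriction to $D^n$, and $\omega_Y^{[m]}(m\Delta_Y)$ are all $S_2$ (equivalently reflexive), so that the datum, being defined and compatible in codimension one, extends uniquely by the $S_2$ property of the demi-normal scheme $Y$. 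This is exactly the mechanism of Kollár's gluing descent for the log canonical class, and since all of its inputs --- adjunction, the different, functoriality of $\omega$, and descent along finite equivalence relations --- are available over the excellent base $\Spec R$ as recorded in the preliminaries, the argument transfers from the field case. I expect the handling of the special fiber to be the only place demanding extra care, namely checking that no new codimension-one phenomena appear over $\pi$; this is controlled by \autoref{lemma:special_fiber_is_reduced} and \autoref{lemma:adjunction_route_along_special_fiber}. Once the datum is in place, $L$ descends and the proof concludes.
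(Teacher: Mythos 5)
There is a genuine gap, and it sits exactly where you locate ``the real work'': the verification that your isomorphism is a descent datum over the \emph{whole} pro-finite relation generated by $R(\tau)$, not just over the generating correspondence $D^n\rightrightarrows X$. The cocycle condition on the generator (your ``square is the identity'') is not sufficient: after saturating to a transitive relation, points of lc centers of codimension $\geq 2$ are identified through chains, and the relation restricted to such a center acts on the stalk of $L$ through a (possibly nontrivial) character of a finite groupoid. The invariant push-forward of $L$ always exists and is $S_2$, but the question is whether it is \emph{invertible}, and reflexive extension from codimension one cannot detect the obstruction, which lives precisely on those deeper strata. This is the hard content of \cite[5.38]{Kollar_Singularities_of_the_minimal_model_program}: in characteristic zero it is resolved by the theory of sources and springs and the finiteness of the relevant automorphism groups, machinery that is not available over a mixed-characteristic base. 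Asserting that ``the argument transfers from the field case'' begs the question the proposition is meant to answer.

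The paper's proof avoids this entirely by a dichotomy you do not exploit. On the generic fiber one is in characteristic zero, so the full statement of \cite[5.38]{Kollar_Singularities_of_the_minimal_model_program} applies and $K_Y+\Delta_Y$ is $\bQ$-Cartier there; hence the non-$\bQ$-Cartier locus, being closed, is contained in $Y_\pi$. On the other hand, the easy (``first'') part of the proof of \cite[5.38]{Kollar_Singularities_of_the_minimal_model_program} --- which does work over $\Spec R$ --- shows that this locus, if nonempty, must contain an lc center of $(Y,\Delta_Y)$. But \autoref{lemma:lc_center_on_families}, together with crepantness of the normalization, shows that no lc center is contained in $Y_\pi$ (this is the actual role of the hypothesis that $(X,D+\Delta+X_\pi)$ is lc, which your argument never uses beyond $\bQ$-Cartierness of $K_X+D+\Delta$). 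So the locus is empty. To repair your proposal you would either need to reprove the source--spring finiteness in mixed characteristic, or restructure the argument along these lines.
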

\begin{proof}
First base-change over the generic point of $\Spec R$. Then \cite[Theorem 5.38]{Kollar_Singularities_of_the_minimal_model_program} shows that $K_Y+\Delta_Y$ is $\bQ$-Cartier on the generic fiber.

So the closed locus where $K_Y+\Delta_Y$ is not $\bQ$-Cartier, if not empty, is contained in the special fiber $Y_\pi$. By \autoref{lemma:lc_center_on_families} and the fact that $(X,D+\Delta)\to (Y,\Delta_Y)$ is crepant, we see that no lc center of $(Y,\Delta_Y)$ is contained in $Y_\pi$. Thus we may apply the first part of the proof of \cite[Theorem 5.38]{Kollar_Singularities_of_the_minimal_model_program}, which is valid in our setting, and conclude that $K_Y+\Delta_Y$ is also $\bQ$-Cartier along the special fiber.
\end{proof}

\subsection{Proof of the gluing theorem}
\autoref{theorem:gluing_for_surface_families} is an immediate combination of \autoref{proposition:gluing_exists} and \autoref{proposition:canonical_sheaf_descends}. Here are some details:

\begin{proof}[Proof of \autoref{theorem:gluing_for_surface_families}]
If $(X,\Delta)$ is as in the left-hand side of \autoref{theorem:gluing_for_surface_families}, we claim its normalization is a triplet $(\bar{X},\bar{D}+\bar{\Delta},\tau)$ as on the right-hand side. We indeed have a generically fixed point free involution $\tau$: this follows from \autoref{lemma:involution_from_normalization} since by \autoref{lemma:intersection_has_expected_dim} the generic points of $\bar{D}$ have characteristic zero residue fields. The other properties clearly hold.

Let $(\bar{X},\bar{D}+\bar{\Delta},\tau)$ be as in the right-hand side of \autoref{theorem:gluing_for_surface_families}. By \autoref{proposition:gluing_exists} the quotient $X:=\bar{X}/R(\tau)$ exists, it is a demi-normal scheme flat and proper over $\Spec R$. By \autoref{proposition:canonical_sheaf_descends} we obtain that $(X,\Delta)$ is slc.

This defines a map in the opposite direction as the normalization. They are inverse to each other by \cite[Proposition 5.3]{Kollar_Singularities_of_the_minimal_model_program} and the fact that $\bar{X}$ is the normalization of $X$.
\end{proof}

\section{Fibers of the quotient}\label{section:fibers_of_quotient}

In the previous section, we have shown that the quotient of the family exists in several situations. In this section, we study the fibers of the quotient whenever it exists. 

\begin{notation}\label{notation:slc_family_II}
We let:
	\begin{itemize}
		\item $R$ be an excellent DVR (of mixed or equi-characteristic); the generic point of $\Spec R$ is denoted by $\eta$.
		\item $Y\to \Spec R$ be a flat separated morphism of finite type from a demi-normal scheme $Y$ (we do not assume any properness property relatively to $\Spec R$).
		\item $p\colon (X,D,\tau)\to Y$ be the normalization morphism.
		\item Then $D$ is reduced of pure codimension one and we let $\bar{D}\to D$ be its normalization.
	\end{itemize}
\end{notation}

Since $\tau$ is an $R$-involution on $\bar{D}$, by \autoref{lemma:involution_respects_fibration} it restricts to an involution $\tau_\pi$ of $\bar{D}_\pi$ and $\tau_\eta$ of $\bar{D}_\eta$. It is clear that $R_{X_\pi}(\tau_\pi)=R_X(\tau)|_{X_\pi}$ and that $R_{X_\eta}(\tau_\eta)=R_X(\tau)|_{X_\eta}$. Since $R(\tau)\rightrightarrows X$ is finite, it follows that $R(\tau_\pi)\rightrightarrows X\pi$ and $R(\tau_\eta)\rightrightarrows X_\eta$ are finite.

We do not assume systematically that $Y$ has an slc structure. When we do, we let $\Delta_Y$ be the boundary on $Y$, and $\Delta$ its strict transform on $X$.

\subsection{Commutativity of fibers and quotients}

We investigate to which extent the quotients $X_\eta/R(\tau_\eta)$ and $X_\pi/R(\tau_\pi)$ are comparable to the fibers $Y_F$ and $Y_\pi$.

\begin{lemma}\label{lemma:commutativity_for_generic_fiber}
$Y_\eta=X_\eta/R(\tau_\eta)$.
\end{lemma}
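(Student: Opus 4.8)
The plan is to exploit the fact that, over a DVR, the generic fibre sits inside the total space as a Zariski-open subscheme, so that forming the quotient commutes with restriction to it. Since $\Spec R$ consists of the closed point and the open generic point $\Spec\Frac(R)$, the base-change $Y_\bQ=Y\times_{\Spec R}\Spec\Frac(R)$ is an open subscheme of $Y$. As $p\colon X\to Y$ is finite (it is the normalization, and the quotient morphism), its preimage is the open subscheme $p^{-1}(Y_\bQ)=X\times_Y Y_\bQ=X\times_{\Spec R}\Spec\Frac(R)=X_\bQ$, where the middle identification is just transitivity of fibre products.

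First I would invoke the flat-locality of the geometric quotient \cite[9.11]{Kollar_Singularities_of_the_minimal_model_program}, which holds a fortiori for the Zariski topology. Applying it to the open immersion $Y_\bQ\hookrightarrow Y$, I get that the restriction $p|_{X_\bQ}\colon X_\bQ\to Y_\bQ$ exhibits $Y_\bQ$ as the geometric quotient of the restricted equivalence relation $R_X(\tau)|_{X_\bQ}\rightrightarrows X_\bQ$. It then remains only to identify this restricted relation: by \autoref{notation:slc_family_II} we have $R_X(\tau)|_{X_\bQ}=R(\tau_\bQ)$, where $\tau_\bQ$ is the involution induced on the generic fibre through \autoref{lemma:involution_respects_fibration}. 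Combining the two identifications yields $Y_\bQ=X_\bQ/R(\tau_\bQ)$.

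There is essentially no obstacle in this argument; the only point requiring care is that restriction to the open generic point commutes with the fibre products defining both $X_\bQ$ and the restricted relation, which is immediate. I would emphasise that this ease is precisely what breaks down for the special fibre: the inclusion $X_\pi\hookrightarrow X$ is a closed immersion rather than an open one, so flat-locality of the quotient no longer applies directly, and the analogous commutativity must instead be established by the delicate adjunction-theoretic analysis carried out in the remainder of the section.
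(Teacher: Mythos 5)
Your argument is correct and is essentially the paper's own proof: the paper likewise invokes the flat-locality of geometric quotients \cite[9.11]{Kollar_Singularities_of_the_minimal_model_program}, using that $Y_\bQ\to Y$ is flat, together with the identification $R_X(\tau)|_{X_\bQ}=R(\tau_\bQ)$ already recorded in \autoref{notation:slc_family_II}. Your additional remark that this flatness is in fact an open immersion is a harmless sharpening.
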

\begin{proof}
This follows immediately from \cite[Corollary 9.11]{Kollar_Singularities_of_the_minimal_model_program} since $Y_\eta\to Y$ is flat.
\end{proof}

\begin{lemma}\label{lemma:finite_univ_homeo_on_special_fibers}
The quotient $Z:=X_\pi/R(\tau_\pi)$ exists, and $X_\pi\to Y_\pi$ factors through a finite birational universal homeomorphism $Z\to Y_\pi$.
\end{lemma}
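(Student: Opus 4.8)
The plan is to construct $Z$ together with the morphism $Z\to Y_\pi$ directly out of the finite morphism $p$, and then to read off its properties from the fact that $Y_\pi$ and $Z$ are two quotients of the \emph{same} equivalence‑class data.

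First I would use that $p\colon X\to Y$ is finite, being the normalization of the excellent scheme $Y$, so that its restriction $p_\pi\colon X_\pi\to Y_\pi$ to the special fiber is finite and surjective. Since $Y=X/R(\tau)$ we have $p\circ\sigma_1=p\circ\sigma_2$ for the two projections of $R(\tau)$; restricting to $X_\pi$ and invoking the identity $R(\tau_\pi)=R_X(\tau)|_{X_\pi}$ from \autoref{notation:slc_family_II}, the morphism $p_\pi$ coequalizes $R(\tau_\pi)$, i.e. $R(\tau_\pi)\subseteq X_\pi\times_{Y_\pi}X_\pi$. I would then set $Z:=\Spec_{Y_\pi}\sB$, where $\sB\subseteq (p_\pi)_*\sO_{X_\pi}$ is the subsheaf of $R(\tau_\pi)$-invariant sections. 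This $\sB$ is a finite $\sO_{Y_\pi}$-algebra, so $Z$ is a scheme, $Z\to Y_\pi$ is finite, and $X_\pi\to Y_\pi$ factors as $X_\pi\to Z\to Y_\pi$. Because $p$ is a geometric quotient, its fiber over any point of $Y_\pi$ is a single $R(\tau)$-class, hence a single $R(\tau_\pi)$-class; thus the fibers of $p_\pi$ are exactly the equivalence classes of $R(\tau_\pi)$, and the descent of invariant sections \cite[9.10]{Kollar_Singularities_of_the_minimal_model_program} identifies $Z$ with the geometric quotient $X_\pi/R(\tau_\pi)$.

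Next I would check that $Z\to Y_\pi$ is a universal homeomorphism. It is finite, and it is bijective on points, since the points of $Z$ are the $R(\tau_\pi)$-classes, which are precisely the fibers of the finite surjection $p_\pi$, i.e. the points of $Y_\pi$. By the characterization of universal homeomorphisms it then suffices to prove radiciality, and here I would argue that the residue field extensions are in fact trivial: a point $y\in Y_\pi$ and the corresponding $z\in Z$ both have as residue field the equalizer of the residue fields $k(x_i)$ of the points $x_i\in X_\pi$ in their common equivalence class, under the identifications imposed by the relation. As the points $x_i$, their residue fields, and the action of $\tau$ on them are the same whether one forms the quotient inside $X$ (producing $Y_\pi$) or inside $X_\pi$ (producing $Z$), one obtains $k(y)=k(z)$. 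Hence $Z\to Y_\pi$ is finite, bijective, and radicial, so it is a universal homeomorphism; since the residue fields agree at the generic points as well, the induced morphism $Z_{\red}\to (Y_\pi)_{\red}$ is birational, and over the open locus where $Y$ is normal (the complement of the conductor) the relation is trivial and $Z\to Y_\pi$ is an isomorphism.

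The step I expect to be the main obstacle is precisely this universal‑homeomorphism claim, packaged as the failure of the formation of invariants to commute with the non‑flat base change $\Spec k(\pi)\to\Spec R$. Concretely, the comparison morphism is the natural map
$$\big(p_*\sO_X\big)^{R(\tau)}\otimes_R k(\pi)\longrightarrow \big((p_\pi)_*\sO_{X_\pi}\big)^{R(\tau_\pi)},$$
and the whole point is that, while this map is neither injective nor surjective in general, the discrepancy it measures is purely nilpotent: the residue‑field computation above shows it introduces no new points and no residue extension, so $\Spec$ of it is a universal homeomorphism rather than an isomorphism. Isolating this nilpotent/base‑change discrepancy — equivalently, confirming that the invariants construction yields the genuine \emph{geometric} quotient and not merely the categorical quotient among schemes affine over $Y_\pi$ — is the crux; the existence, finiteness, and generic birationality are comparatively formal, and the precise measurement of the discrepancy is exactly what the subsequent propositions (\autoref{proposition:equivalent_cond_for_commutativity} and \autoref{proposition:some_minimal_conditions_for_commutativity}) will carry out.
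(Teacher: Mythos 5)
Your route is genuinely different from the paper's, and it has a gap at exactly the point you flag as the crux. The paper does not construct $Z$ by hand: it invokes Koll\'ar's existence theorem for geometric quotients by finite set-theoretic equivalence relations in positive characteristic \cite[Theorem 6, Corollary 48]{Kollar_Quotients_by_finite_equivalence_relations} (applicable because $X_\pi$ is of finite type over the residue field $k(\pi)$ of characteristic $p>0$ and has the Chevalley--Kleiman property), and then gets the factorization $Z\to Y_\pi$ for free from the universal property. Your construction $Z:=\Spec_{Y_\pi}\sB$ with $\sB$ the invariant subalgebra does produce a finite $Y_\pi$-scheme through which $p_\pi$ factors, but identifying it with the \emph{geometric} quotient --- i.e.\ verifying initiality among all algebraic spaces and that the geometric fibers of $X_\pi\to Z$ are exactly the $R(\tau_\pi)$-classes --- is precisely the nontrivial content of Koll\'ar's theorem; it is not supplied by \cite[9.10]{Kollar_Singularities_of_the_minimal_model_program}, which only says that \emph{if} the geometric quotient exists then its structure sheaf is the invariants. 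Moreover, your closing claim that this issue is ``exactly what \autoref{proposition:equivalent_cond_for_commutativity} and \autoref{proposition:some_minimal_conditions_for_commutativity} will carry out'' is a misattribution: those results presuppose that $Z$ is the geometric quotient and measure the discrepancy between $Z$ and $Y_\pi$, not the discrepancy between $\Spec$ of the invariants and the geometric quotient.

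The universal-homeomorphism step is also not rigorous as written. You assert that $k(y)=k(z)$ because both equal ``the equalizer of the residue fields $k(x_i)$''; but each of $k(y)$ and $k(z)$ is only a subfield of that equalizer (the residue field of an invariant subring need not surject onto the equalizer of residue fields), so equality does not follow, and in residue characteristic $p$ one should only expect $k(z)/k(y)$ to be purely inseparable, not trivial. The paper's argument avoids this entirely: by \cite[9.2]{Kollar_Singularities_of_the_minimal_model_program}, the geometric points of a geometric quotient are the equivalence classes of geometric points, so applying this to both $X_\pi\to Z$ and $X\to Y$ (restricted over $\pi$) shows $q(\Spec K)$ is bijective for every geometric point $\Spec K\to\Spec k(\pi)$; a finite universally bijective morphism is a universal homeomorphism. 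I would recommend replacing your construction by the citation of Koll\'ar's existence theorem and running the geometric-point argument for the universal homeomorphism; your observation that birationality holds because the relation is trivial over the normal locus of $Y$ is fine and matches the paper.
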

\begin{proof}
The quotient exists as a scheme by \cite[Theorem 6, Corollary 48]{Kollar_Quotients_by_finite_equivalence_relations}. By the universal property of the quotient, $X_\pi\to Y_\pi$ factors through a morphism $q\colon Z\to Y_\pi$. 
\begin{equation}\label{eqn:quotient_diagram}
		\begin{tikzcd}
		X_\pi\arrow[r, hook]\arrow[d] & X\arrow[dd, "p"] \\
		Z\arrow[d, "q"] & \\
		Y_\pi \arrow[r, hook] & Y
		\end{tikzcd}
\end{equation}
Using \cite[Definition 9.2]{Kollar_Singularities_of_the_minimal_model_program} for $X_\pi\to Z$ and $X\to Y$, we see that $q(\Spec K)\colon Z(\Spec K)\to Y_\pi(\Spec Z)$ is a bijection for every geometric point $\Spec K\to \Spec k(\pi)$. Thus $q$ is a universal homeomorphism by \cite[3.5.3-5]{EGA_I}. It is finite birational since $X_\pi\to Y_\pi$ is so.
\end{proof}

Let us study more precisely the morphism $Z\to Y_\pi$. The question is flat-local on $Y$ by \cite[Corollary 9.11]{Kollar_Singularities_of_the_minimal_model_program}, so in particular we may assume that every scheme appearing in \autoref{eqn:quotient_diagram} is affine, and work with sections of structural sheaves as if they were global sections.

We introduce the following sub-sheaves:
		$$\sO_X^+:=\{s\in \sO_X \ : \ s|_{\bar{D}} \text{ is }\tau\text{-invariant}\}, \quad \sO_D^+:=\{t\in \sO_D\ : \ t|_{\bar{D}} \text{ is }\tau\text{-invariant}\}$$
and
		$$\sO_{X_\pi}^+:=\{s\in \sO_X\ : \ s|_{\bar{D}_\pi} \text{ is }\tau_\pi\text{-invariant}\}, \quad \sO_{D_\pi}^+:=\{t\in \sO_{D_\pi}\ : \ t|_{\bar{D}_\pi} \text{ is }\tau_\pi\text{-invariant}\}.$$
They fit into the following commutative diagram:
		\begin{equation}\label{eqn:+_diagram}
			\begin{tikzcd}
			\sO_{D_\pi}^+ && \sO_D^+ \arrow[ll] \\
			\sO_{X_\pi}^+\arrow[u] && \sO_X^+ \arrow[ll]\arrow[u]
			\end{tikzcd}
		\end{equation}

\begin{claim}
The quotient $W(\pi):=D_\pi/R_{D_\pi}(\tau_\pi)$ exists as a scheme, and $\sO_{W(\pi)}=\sO_{D_\pi}^+$.
\end{claim}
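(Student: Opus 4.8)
The plan is to deduce both assertions from the general theory of quotients by finite equivalence relations recalled in \autoref{section:quotients_by_finite_relations}, together with the descent criterion for functions, applied to the finite surjection $f\colon \bar{D}_\pi\to D_\pi$ (the restriction to special fibers of the normalization $\bar{D}\to D$) and the involution $\tau_\pi$.

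First I would check that $R_{D_\pi}(\tau_\pi)\rightrightarrows D_\pi$ is a \emph{finite} set-theoretic equivalence relation. By construction it is induced by the closed subscheme $(f\times f)(\Delta_{\bar{D}_\pi}\cup\Gamma_{\tau_\pi})$ of $D_\pi\times_{k(\pi)}D_\pi$. Since $\tau$ preserves the $\Spec R$-fibers (\autoref{lemma:involution_respects_fibration}) and $D_\pi=D\cap X_\pi$ is an $R_X(\tau)$-invariant closed subscheme of $X_\pi$, the relation $R_{D_\pi}(\tau_\pi)$ is exactly the restriction of $R_{X_\pi}(\tau_\pi)$ to $D_\pi$. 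As recorded in \autoref{notation:slc_family_II}, $R_{X_\pi}(\tau_\pi)$ is finite because $R_X(\tau)$ is; and the restriction of a finite equivalence relation to an invariant closed subscheme is again finite, the projections being base changes of finite morphisms. Existence of $W$ as a scheme then follows as for $Z$ in the previous lemma: the formation of quotients is local in the flat topology on the target \cite[9.11]{Kollar_Singularities_of_the_minimal_model_program}, so I may assume $Y$, and hence $D_\pi$ and $\bar{D}_\pi$, to be affine; then $D_\pi$ is affine of finite type over $k(\pi)$, a fortiori has the Chevalley--Kleiman property, and \cite[Theorem 6, Corollary 48]{Kollar_Quotients_by_finite_equivalence_relations} yields the geometric quotient $W=D_\pi/R_{D_\pi}(\tau_\pi)$ as a scheme (equivalently, $W$ is realised as the image of the invariant subscheme $D_\pi$ under the quotient $X_\pi\to Z$).

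For the identification of the structure sheaf I would invoke the descent criterion \cite[9.10]{Kollar_Singularities_of_the_minimal_model_program} for the geometric quotient $q\colon D_\pi\to W$: the map $q^*\colon \sO_W\to \sO_{D_\pi}$ is injective with image the subsheaf of sections $s$ that are $R_{D_\pi}(\tau_\pi)$-invariant, i.e.\ satisfy $\sigma_1^*s=\sigma_2^*s$. Unwinding this along the generating subscheme $(f\times f)(\Delta_{\bar{D}_\pi}\cup\Gamma_{\tau_\pi})$, the diagonal part $\Delta_{\bar{D}_\pi}$ imposes nothing, while the graph part $\Gamma_{\tau_\pi}$ forces $f^*s=\tau_\pi^*f^*s$, that is, $\tau_\pi$-invariance of $s|_{\bar{D}_\pi}$. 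Since invariance under a generating set of relations is equivalent to invariance under the equivalence relation it generates, this is precisely the defining condition of $\sO_{D_\pi}^+$, whence $\sO_W=\sO_{D_\pi}^+$.

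I expect the only genuine point requiring care to be the finiteness of $R_{D_\pi}(\tau_\pi)$, namely the verification that $D_\pi$ is an invariant closed subscheme and that restricting the finite relation $R_X(\tau)$ to it keeps both projections finite; everything else is a formal consequence of \cite[9.10, 9.11]{Kollar_Singularities_of_the_minimal_model_program} and \cite{Kollar_Quotients_by_finite_equivalence_relations}. The matching of the abstract invariance condition with the explicit definition of $\sO_{D_\pi}^+$ is then a direct unwinding of the relation generated by $(f,\tau_\pi)$.
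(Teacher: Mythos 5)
Your proof is correct, and the structure-sheaf identification (via the descent criterion of \cite[9.10]{Kollar_Singularities_of_the_minimal_model_program}, unwinding invariance along the generating subscheme to $\tau_\pi$-invariance of $s|_{\bar{D}_\pi}$) is exactly what the paper does. Where you diverge is the existence step: the paper simply applies \cite[9.10]{Kollar_Singularities_of_the_minimal_model_program} to the finite morphism $D_\pi\to Y_\pi$, which already coequalizes the relation because $Y$ is the quotient of $X$; since a finite set-theoretic equivalence relation dominated by a finite morphism always admits a geometric quotient, existence and the equality $\sO_W=\sO_{D_\pi}^+$ come out of that single reference in one stroke. You instead re-establish finiteness of $R_{D_\pi}(\tau_\pi)$ by restricting $R_{X_\pi}(\tau_\pi)$ to the invariant closed subscheme $D_\pi$ and then invoke the positive-characteristic existence theorem \cite[Theorem 6, Corollary 48]{Kollar_Quotients_by_finite_equivalence_relations} after flat-localizing to the affine case. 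That is valid (it is the same mechanism the paper uses one claim earlier for $Z=X_\pi/R(\tau_\pi)$), and it has the merit of not presupposing that $Y_\pi$ receives a finite morphism from $D_\pi$ coequalizing the relation; but it is heavier machinery than needed here, since the ambient quotient $Z\to Y_\pi$ (or directly $D_\pi\to Y_\pi$) is already available and makes the existence of $W$ formal. Your parenthetical observation that $W$ is the image of the invariant subscheme $D_\pi$ in $Z$ is consistent with the universal push-out property the paper exploits in the next claim.
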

\begin{proof}\renewcommand{\qedsymbol}{$\lozenge$}
The quotient exists as a scheme by \cite[Lemma 9.10]{Kollar_Singularities_of_the_minimal_model_program} applied to $D_\pi\to Y_\pi$, and the second assertion follows from the same reference.
\end{proof}

\begin{claim}
$\sO_Z=\sO_{X_\pi}^+$.
\end{claim}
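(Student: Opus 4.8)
The plan is to identify $\sO_Z$ with the $\tau_\pi$-invariant functions by invoking the same descent-of-functions result \cite[9.10]{Kollar_Singularities_of_the_minimal_model_program} that settled the preceding claim about $W$, applied now to the quotient morphism $X_\pi\to Z$. Recall that $Z=X_\pi/R(\tau_\pi)$ exists by the previous lemma, and that $R(\tau_\pi)\rightrightarrows X_\pi$ is the finite set-theoretic equivalence relation generated by the image of the diagonal $\Delta_{\bar{D}_\pi}$ and of the graph $\Gamma_{\tau_\pi}$ under the finite morphism $\bar{D}_\pi\to X_\pi$. Since the situation is affine and $Z$ is a geometric quotient, a function $s\in\sO_{X_\pi}$ lies in $\sO_Z$ exactly when it is constant on $R(\tau_\pi)$-equivalence classes, i.e. when $\sigma_1^*s=\sigma_2^*s$ along $R(\tau_\pi)$.

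First I would unwind this invariance condition on the two generators of the relation. Along the diagonal $\sigma_1=\sigma_2$, so no constraint is imposed off the image of $\bar{D}_\pi$; along the graph $\Gamma_{\tau_\pi}$ the condition reads $s|_{\bar{D}_\pi}=\tau_\pi^*\bigl(s|_{\bar{D}_\pi}\bigr)$. Passing to the transitive closure adds nothing, since being constant on a class is automatically a transitive condition for functions. Hence $s$ descends to $Z$ if and only if $s|_{\bar{D}_\pi}$ is $\tau_\pi$-invariant, which is precisely the defining condition of $\sO_{X_\pi}^+$; this yields $\sO_Z=\sO_{X_\pi}^+$.

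The one point I would treat with care — and the only step not reducing to a direct citation — is the passage from invariance read off the reduced relation $R(\tau_\pi)$ to invariance of the entire restriction $s|_{\bar{D}_\pi}$. The generic points of $R(\tau_\pi)$ lie over the generic points of $\Gamma_{\tau_\pi}$, so a priori the identity $s|_{\bar{D}_\pi}=\tau_\pi^*\bigl(s|_{\bar{D}_\pi}\bigr)$ is only known on a dense open of $\bar{D}_\pi$. Since $\bar{D}_\pi$ is reduced, hence $S_1$, a section agreeing with its $\tau_\pi$-translate on a dense open agrees everywhere, so this generic invariance upgrades to genuine invariance. This is exactly the density argument used in the proof of \autoref{lemma:quotient_of_S_2_is_S_2}, and I expect it to be the only non-formal ingredient beyond the citation of \cite[9.10]{Kollar_Singularities_of_the_minimal_model_program}.
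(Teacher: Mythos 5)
Your argument is correct, but it is not the route the paper takes. The paper's proof never mentions invariant functions directly: it invokes the universal push-out property \cite[Proposition 25]{Kollar_Quotients_by_finite_equivalence_relations} for the square $D_\pi \hookrightarrow X_\pi$, $W \hookrightarrow Z$, which gives $\sO_Z=\sO_{X_\pi}\times_{\sO_{D_\pi}}\sO_W$, and then concludes using the preceding claim $\sO_W=\sO_{D_\pi}^+$. That route reuses the claim about $W$ and keeps the fiber-product description of $\sO_Z$ in exact parallel with $\sO_Y=\sO_X\times_{\sO_D}\sO_{D/R_D(\tau)}$, which is the structure exploited again in \autoref{proposition:Serre_property_of_quotient}. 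Your route instead characterizes $\sO_Z$ directly as the ring of functions equalized by $\sigma_1,\sigma_2$ on $R(\tau_\pi)$ — which does follow from \cite[9.10]{Kollar_Singularities_of_the_minimal_model_program} applied to the (already constructed) finite morphism $X_\pi\to Z$, or simply from initiality plus finiteness of the geometric quotient in the reduced affine setting — and then identifies that equalizer with $\sO_{X_\pi}^+$. Both arguments are legitimate and of comparable length; yours is more self-contained, the paper's fits better into the surrounding push-out formalism.

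One caution about your final paragraph: the worry about ``generic'' invariance is misplaced, and the patch you propose is not actually available here. The condition $\sigma_1^*s=\sigma_2^*s$ is imposed on the whole closed subscheme $R(\tau_\pi)$, which contains the full (reduced) image of the graph $\Gamma_{\tau_\pi}$, not merely a dense open of it; so there is nothing to upgrade. This matters because the claim lives under \autoref{notation:slc_family_II}, where no lc hypothesis is made, and the reducedness of $\bar{D}_\pi$ is only established in \autoref{lemma:special_fiber_is_reduced} under the lc assumptions of \autoref{notation:slc_family}; an argument that genuinely needed ``$\bar{D}_\pi$ reduced, hence $S_1$'' would be a gap at this point of the paper. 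Since that step is superfluous, your proof stands without it.
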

\begin{proof}\renewcommand{\qedsymbol}{$\lozenge$}
Indeed, by \cite[Proposition 25]{Kollar_Quotients_by_finite_equivalence_relations} the diagram
		$$\begin{tikzcd}
		D_\pi \arrow[d] \arrow[r,hook] & X_\pi\arrow[d] \\
		W(\pi)=D_\pi/R_{D_\pi}(\tau_\pi) \arrow[r, hook]& Z
		\end{tikzcd}$$
is a universal push-out. This implies that
		$$\sO_Z=\sO_{X_\pi}\times_{\sO_{D_\pi}}\sO_{W(\pi)}=\sO_{X_\pi}\times_{\sO_{D_\pi}}\sO_{D_\pi}^+=\sO_{X_\pi}^+$$
as claimed.
\end{proof}

\begin{proposition}\label{proposition:equivalent_cond_for_commutativity}
$Z\to Y_\pi$ is an isomorphism if and only if the restriction map $\sO_X^+\to \sO_{X_\pi}^+$ appearing in \autoref{eqn:+_diagram} is surjective.
\end{proposition}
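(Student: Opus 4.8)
The plan is to translate the statement into a question about the structure-sheaf comorphism $q^{\#}\colon \sO_{Y_\pi}\to q_*\sO_Z$ of the finite birational universal homeomorphism $q\colon Z\to Y_\pi$ constructed above, and then to show that $q^{\#}$ is \emph{automatically injective}, so that being an isomorphism is equivalent to being surjective. Since the construction is flat-local on $Y$, I work affinely and treat the structural sheaves as rings. First I record that $\sO_Y=\sO_X^+$: this is \cite[9.10]{Kollar_Singularities_of_the_minimal_model_program} applied to the normalization $X\to Y$, which realizes $Y$ as the quotient $X/R(\tau)$, exactly as in the computation $\sO_Z=\sO_{X_\pi}^+$ carried out above. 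Because $Y$ is flat over $\Spec R$, $\pi$ is a nonzerodivisor on $\sO_Y=\sO_X^+$, hence
\[
\sO_{Y_\pi}=\sO_Y/\pi\sO_Y=\sO_X^+/\pi\sO_X^+ .
\]
Under this identification together with $\sO_Z=\sO_{X_\pi}^+$, the map $q^{\#}$ is precisely the map induced on the quotient by the restriction map $r\colon \sO_X^+\to \sO_{X_\pi}^+$ of \autoref{eqn:+_diagram} (note that $r$ kills $\pi\sO_X^+$, since $\pi=0$ on $X_\pi$). In particular $\im q^{\#}=\im r$, so $q^{\#}$ is surjective if and only if $r$ is surjective.

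The key point, which I expect to be the main obstacle, is the injectivity of $q^{\#}$, and I claim it always holds. Since $Y$ is demi-normal it is $S_2$, and $Y_\pi$ is the effective Cartier divisor cut out by the nonzerodivisor $\pi$; cutting an $S_2$ scheme by a single nonzerodivisor yields an $S_1$ scheme, so $Y_\pi$ is $S_1$. Consequently $\sO_{Y_\pi}$ has no embedded associated primes: its associated points are exactly the generic points of the irreducible components of $Y_\pi$. As $q$ is birational it is an isomorphism over a dense open $V\subseteq Y_\pi$, and $V$ contains all of those generic points. The kernel of $q^{\#}$ is a subsheaf of $\sO_{Y_\pi}$ that vanishes on $V$, hence is supported away from every associated point of $\sO_{Y_\pi}$; a submodule whose support misses all associated primes is zero, so $\ker q^{\#}=0$.

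Granting this, the proposition follows formally. The finite morphism $Z\to Y_\pi$ is an isomorphism if and only if $q^{\#}$ is an isomorphism; since $q^{\#}$ is always injective, this happens if and only if $q^{\#}$ is surjective, which by the first paragraph is equivalent to surjectivity of $r\colon\sO_X^+\to\sO_{X_\pi}^+$. I expect the only delicate step to be the injectivity argument: one must be careful that $Y_\pi$, although possibly non-reduced and singular, has no embedded components, and this is precisely what the $S_1$ property extracted from demi-normality of $Y$ provides. Everything else is bookkeeping with the two quotient computations $\sO_Y=\sO_X^+$ and $\sO_Z=\sO_{X_\pi}^+$ and the flatness of $Y$ over $\Spec R$.
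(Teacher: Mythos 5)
Your proof is correct and follows essentially the same route as the paper's: both identify $\sO_Y=\sO_X^+$ and $\sO_Z=\sO_{X_\pi}^+$ and read the equivalence off the resulting commutative square relating $\sO_Y\twoheadrightarrow\sO_{Y_\pi}\to\sO_Z$ to the restriction map $\sO_X^+\to\sO_{X_\pi}^+$. The only difference is that you explicitly justify the injectivity of $\sO_{Y_\pi}\to\sO_Z$ (via the $S_1$ property of $Y_\pi$ inherited from the demi-normal, hence $S_2$, scheme $Y$ cut by the nonzerodivisor $\pi$), a step the paper asserts without comment via a hooked arrow; this is a welcome clarification rather than a different argument.
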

\begin{proof}
By \cite[Lemma 9.10]{Kollar_Singularities_of_the_minimal_model_program} we have $\sO_Y=\sO_X^+$. Combining this with the previous claim and the commutative diagram \autoref{eqn:quotient_diagram} we obtain
		$$\begin{tikzcd}
		\sO_{X_\pi}^+ & \sO_X^+\arrow[l, "\alpha" above] \\
		\sO_Z\arrow[u, "="]&&\\
		\sO_{Y_\pi}\arrow[u, hook] & \sO_Y\arrow[l, twoheadrightarrow]\arrow[uu, "="right]
		\end{tikzcd}$$
It is easy to see that $\alpha$ is surjective if and only if $\sO_{Y_\pi}\hookrightarrow \sO_Z$ is bijective.
\end{proof}

\begin{proposition}\label{proposition:some_minimal_conditions_for_commutativity}
Assume that: 
	\begin{enumerate}
		\item $\tau(\sO_D)\subseteq \sO_D$,
		\item $\sO_{D_\pi}\to \sO_{\bar{D}_\pi}$ is injective, 
		\item $D\to \Spec R$ is flat, and
		\item $2\in R^\times$ (equivalently, $\Char k(\pi)\neq 2)$.
	\end{enumerate}
	Then $Z=Y_\pi$.
\end{proposition}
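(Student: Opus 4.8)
The plan is to verify the criterion of \autoref{proposition:equivalent_cond_for_commutativity}: it suffices to prove that the restriction map $\alpha\colon \sO_X^+\to \sO_{X_\pi}^+$ is surjective. As in the discussion preceding that proposition, the question is flat-local on $Y$, so I may assume that all the schemes in sight are affine and argue with rings as if with global sections. Away from $D_\pi$ the two sheaves carry no $\tau$-condition and $\alpha$ is just the reduction surjection $\sO_X\twoheadrightarrow \sO_{X_\pi}$, so there is nothing to prove; hence I may localize in a neighbourhood of $D_\pi$, where hypothesis (1) is available. The strategy is then to lift a given section first \emph{along $D$} in a $\tau$-invariant way, and then to lift the resulting $\pi$-divisible discrepancy up to $X$ without disturbing the reduction modulo $\pi$.

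Fix $\bar s\in \sO_{X_\pi}^+$ and choose any lift $s_0\in \sO_X$ along $\sO_X\twoheadrightarrow \sO_{X_\pi}$. Since $D$ is reduced, $\sO_D\hookrightarrow \sO_{\bar D}$ is injective, and by hypothesis (1) the ring involution induced by $\tau$ on $\sO_{\bar D}$ carries this subring into itself, hence restricts to an involution $\tau_D$ of $\sO_D$; as $\tau$ is $R$-linear it preserves $\pi\sO_D$ and descends to an involution of $\sO_{D_\pi}=\sO_D/\pi$ (hypothesis (3) guaranteeing that $\pi$ is a nonzerodivisor on $\sO_D$, so this fibre behaves as expected). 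Because $\Char R/(\pi)\neq 2$, the element $2$ is a unit in $R$, hence in $\sO_D$, and I may form the symmetrization
\[
d_1:=\tfrac12\bigl(s_0|_D+\tau_D(s_0|_D)\bigr)\in \sO_D,
\]
which lies in $\sO_D^+$ by construction.

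It remains to match reductions modulo $\pi$. Here hypothesis (2) enters: by assumption $\bar s|_{\bar D_\pi}$ is $\tau_\pi$-invariant, and since $\sO_{D_\pi}\hookrightarrow \sO_{\bar D_\pi}$ is injective and intertwines the reduction of $\tau_D$ with $\tau_\pi$, the section $\bar s|_{D_\pi}$ is already $\tau_D$-invariant in $\sO_{D_\pi}$. Therefore $d_1\equiv \tfrac12(\bar s|_{D_\pi}+\bar s|_{D_\pi})=\bar s|_{D_\pi}\equiv s_0|_D \pmod{\pi}$, so $s_0|_D-d_1=\pi e$ for some $e\in\sO_D$. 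Lifting $e$ along the surjection $\sO_X\twoheadrightarrow \sO_D$ to $\tilde e\in \sO_X$ and setting $s:=s_0-\pi\tilde e$, one gets $s\equiv \bar s\pmod{\pi}$ while $s|_D=d_1\in \sO_D^+$; thus $s|_{\bar D}$ is $\tau$-invariant, i.e. $s\in \sO_X^+$ with $\alpha(s)=\bar s$. This gives surjectivity, and the proposition follows. I expect the only genuinely delicate point to be the compatibility invoked in the last paragraph — namely that the involution $\tau_D$ produced from hypothesis (1) reduces to something compatible with $\tau_\pi$, so that the injectivity hypothesis (2) can transport $\tau_\pi$-invariance of $\bar s|_{\bar D_\pi}$ down to $\tau_D$-invariance of $\bar s|_{D_\pi}$; once that is pinned down, the symmetrization and the lifting of the $\pi$-divisible correction are formal, relying only on $2$ being invertible and on the surjectivity of $\sO_X\to\sO_{X_\pi}$ and $\sO_X\to\sO_D$.
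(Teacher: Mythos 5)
Your proof is correct and follows essentially the same route as the paper: both verify the criterion of \autoref{proposition:equivalent_cond_for_commutativity} by taking an arbitrary lift $t$ of $s\in\sO_{X_\pi}^+$, using hypothesis (2) to see that $v-\tau(v)\in\pi\sO_D$ for $v=t|_D$, and then correcting $t$ by a $\pi$-multiple (using that $2$ is invertible) so that its restriction to $D$ becomes $\tau$-invariant without changing its reduction mod $\pi$. The only cosmetic difference is that you symmetrize to $\tfrac12(v+\tau(v))$, which is $\tau$-invariant by construction and so bypasses the paper's use of flatness of $D\to\Spec R$ to deduce $a=-\tau(a)$ from $\tau(v)=v+a\pi$.
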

\begin{proof}
We prove that $\sO_X^+\to \sO_{X_\pi}^+$ is surjective. Take $s\in \sO_{X_\pi}^+$ and any lift $t\in \sO_X$. Write $v=t|_D$. By hypothesis $\tau(v)\in\sO_D$ and by construction $\tau(v)-v$ vanishes when restricted to $\bar{D}_\pi$. Since $\sO_{D_\pi}\to \sO_{\bar{D}_\pi}$ is injective, we see that $v-\tau(v)$ belongs to the ideal $\pi\cdot \sO_{D}$. Thus we can write $\tau(v)=v+a\pi$ for some $a\in\sO_D$. Since $\tau$ is an $R$-involution,
		$$v=\tau^{\circ 2}(v)=v+(a+\tau(a))\pi.$$
By flatness $\pi$ is not a zero-divisor in $\sO_D$, so we have $a=-\tau(a)$. Let $b\in \sO_X$ be any lift of $a$. Since $2\in \sO_X$ is invertible we can form the element $t':=t+\frac{b}{2}\pi$. Then $t'\in\sO_X^+$ and $t'|_{X_\pi}=t|_{X_\pi}=s$, as desired.
\end{proof}

\subsubsection{The case of locally stable families.}\label{section:commutativity_in_slc_case}
We are mainly interested in the case where $(Y,\Delta_Y+Y_\pi)$ is slc for some divisor $\Delta_Y$. In this case the conditions of \autoref{proposition:some_minimal_conditions_for_commutativity} 
are not necessarily met. To wit, consider the following example.

\begin{example}
Let $Y=\Spec R[x,y,z]/(xyz)$. Since $Y$ is an hypersurface, it is Cohen--Macaulay and Gorenstein. Then its normalization $X$ is the union of three copies of $\bA^2_R$, with the morphism $X\to Y$ given by:
		$$\begin{matrix}
		R[x,y,z]/(xyz) & \longrightarrow & R[u_1,u_2] & \oplus & R[v_1,v_2] & \oplus & R[w_1,w_2] \\
		x & \mapsto & u_1&\oplus & v_2 &\oplus &0 \\
		y & \mapsto & u_2&\oplus &0&\oplus & w_1 \\
		z & \mapsto & 0&\oplus & v_1&\oplus &v_2
		\end{matrix}$$
The conductor $D$ is given by $V(u_1u_2)\sqcup V(v_1v_2)\sqcup V(w_1w_2)\subset X$. Notice that $(Y,Y_\pi)$ is slc: for it is easily checked using inversion of adjunction that $(X,D+X_\pi)$ is lc. 

The involution $\tau$ on $\bar{D}$ is given by three isomorphisms of lines, namely
		$$\tau=\left[(\bA^1_{u_1}\cong\bA^1_{v_2},\ u_1\mapsto v_2),\quad 
		(\bA^1_{v_1}\cong\bA^1_{w_2},\ v_1\mapsto w_2),\quad
		(\bA^1_{u_2}\cong\bA^1_{w_1},\ u_2\mapsto w_1)\right].$$
The picture on the special fiber is exactly the same, except that $R$ is replaced with its residue field $k(\pi)$.

The involution $\tau$ does not descend to $D$, since otherwise the three origins would belong to the same orbit. So \autoref{proposition:some_minimal_conditions_for_commutativity} does not apply.

On the other hand, $\sO_X^+$ is the set of $f(s(u_1,v_2),s'(v_1,w_2),s''(u_2,w_1))$ where $f\in R[X,Y,Z]$ and $s,s',s''$ run through the symmetric polynomials in two variables. Similarly for $\sO_{X_\pi}^+$, except that we take $f\in k(\pi)[X,Y,Z]$. In particular we see that $\sO_X^+\to \sO_{X_\pi}^+$ is surjective, so the conclusion of \autoref{proposition:some_minimal_conditions_for_commutativity} holds nonetheless.
\end{example}

An easy application of \autoref{proposition:some_minimal_conditions_for_commutativity} is the following:
\begin{proposition}
Suppose that $\Char k(\pi)\neq 2$ and that $(Y,\Delta_Y+Y_\pi)$ is slc. Then:
	\begin{enumerate}
		\item $Z\to Y_\pi$ is an isomorphism in codimension one.
		\item If $Y_\pi$ is $S_2$, then $Z\to Y_\pi$ is an isomorphism.
		\item If $D$ is normal, then $Z\to Y_\pi$ is an isomorphism.
	\end{enumerate}
\end{proposition}
\begin{proof}
Let $(X,\Delta+D+X_\pi)$ be the normalization. By \cite[Corollary 2.32]{Kollar_Singularities_of_the_minimal_model_program} we see that $D$ is normal in codimension one along $D_\pi$, thus $\tau(\sO_D)\subseteq \sO_D$ holds in a neighbourhood of the generic points of $D_\pi$. 

We consider the pullback morphism $\sO_{D_\pi}\to \sO_{\bar{D}_\pi}$. Recall that $\bar{D}_\pi$ is reduced (\autoref{lemma:special_fiber_is_reduced}) and $\bar{D}_\pi\to D_\pi$ is dominant. Thus $\sO_{D_\pi}\to \sO_{\bar{D}_\pi}$ is injective if and only if $D_\pi$ is reduced. By \cite[Corollary 2.32]{Kollar_Singularities_of_the_minimal_model_program}, we know that $D_\pi$ is generically reduced. If $D$ is normal, then $D_\pi$ is $S_1$ and thus reduced everywhere.

Hence we deduce that there is an open subset $U\subset Y$ such that $X_U$ contains every codimension one point of $X_\pi$ and such that the conditions of \autoref{proposition:some_minimal_conditions_for_commutativity} are satisfied for the normalization $(X_U,D_U,\tau_U)\to U$. It follows that $Z_U\to Y_\pi\cap U$ is an isomorphism. This proves the first point, and the second point follows easily. If $D$ is normal we can take $U=Y$, thus obtaining the third point.
\end{proof}

In the surface case, a finer analysis of the singularities of $D$ yields a stronger statement under mild hypothesis:
\begin{theorem}\label{thm:commutativity_in_slc_case}
Suppose that $k(\pi)$ is perfect of characteristic $p\neq 2$ and that $(Y,\Delta_Y+Y_\pi)$ is slc of dimension $3$ and quasi-projective over $\Spec R$. If the scheme-theoretic intersection $D_\pi$ is reduced, then $Z\to Y_\pi$ is an isomorphism. 
\end{theorem}

Before proving this theorem, let us comment on the reducedness of $D_\pi$. It is is the scheme-theoretic intersection of two lc centers of $(X,\Delta+D+X_\pi)$. In characteristic zero, intersections of lc centers are reduced \cite[Theorem 7.8]{Kollar_Singularities_of_the_minimal_model_program}. In mixed or equi-characteristic, we can prove the following.

\begin{proposition}\label{proposition:intersection_of_lc_centers}
If $(Y,\Delta_Y+Y_\pi)$ is an slc threefold pair of finite type over $\Spec R$, and $k(\pi)$ is perfect of characteristic $p>5$, then $D_\pi$ is reduced. 
\end{proposition}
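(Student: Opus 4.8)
The plan is to isolate the one place where reducedness can fail---embedded points of the curve $D_\pi$---and to exclude them by feeding the Bernasconi--Kollár vanishing theorems into the characteristic-zero argument of \cite[7.8]{Kollar_Singularities_of_the_minimal_model_program}.

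First I would dispose of the cheap part. Since $(X,\Delta+D+X_\pi)$ is lc, \autoref{lemma:intersection_has_expected_dim} shows that every component of $D$ dominates $\Spec R$, so $D\to\Spec R$ is flat, $\pi$ is a nonzerodivisor on $\sO_D$, and $\sO_{D_\pi}=\sO_D/\pi\sO_D$ is pure of dimension $\dim D-1=1$. By the classification of codimension two lc singularities \cite[2.32]{Kollar_Singularities_of_the_minimal_model_program}, at the generic points of $D_\pi$ the divisors $D$ and $X_\pi$ are regular and meet transversally, so $D_\pi$ is generically reduced. Hence reducedness of $D_\pi$ is equivalent to the $S_1$ condition, i.e. to the absence of embedded points; as $D$ is a surface these are all closed points, and $\depth_{\sO_{D,x}}\sO_{D_\pi,x}=\depth_{\sO_{D,x}}\sO_D-1$, so the statement I actually need is that $D$ is $S_2$ at the closed points of $D_\pi$.

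To organize this I would use the conductor. Let $\nu\colon\bar{D}\to D$ be the normalization, let $\nu_\pi\colon\bar{D}_\pi\to D_\pi$ be the induced finite map, and set $\sF:=\coker\bigl(\sO_D\hookrightarrow\nu_*\sO_{\bar{D}}\bigr)$. Both $\sO_D$ and $\nu_*\sO_{\bar{D}}$ are $R$-flat, so tensoring the conductor sequence with $R/\pi$ and using that $\nu$ is finite gives the four-term exact sequence
\[ 0 \to \sF[\pi] \to \sO_{D_\pi} \to \nu_{\pi,*}\sO_{\bar{D}_\pi} \to \sF/\pi\sF \to 0, \]
where $\sF[\pi]$ is the $\pi$-torsion. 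By \autoref{lemma:special_fiber_is_reduced} the sheaf $\nu_{\pi,*}\sO_{\bar{D}_\pi}$ is reduced; therefore if $\sF[\pi]=0$ then $\sO_{D_\pi}$ embeds into a reduced ring and is itself reduced. Moreover $\sF$ is supported on the non-normal locus of $D$, which by the previous paragraph contains no component of $D_\pi$; thus $\sF[\pi]$ is supported at finitely many closed points of $D_\pi$, and its vanishing is exactly the $S_2$-statement singled out above.

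The vanishing of $\sF[\pi]$ is where I expect the genuine difficulty, and where the hypotheses $\dim Y=3$ and $p\geq 7$ enter. I would run the argument of \cite[7.8]{Kollar_Singularities_of_the_minimal_model_program}: after adjunction to $\bar{D}$ one expresses the obstruction $\sF[\pi]$, equivalently the relevant local cohomology $H^1_x(\sO_D)$, through a global cohomology group which vanishes by a Kollár-type injectivity and torsion-freeness statement. In characteristic zero this vanishing is Kawamata--Viehweg; in the present situation the ambient space $X$ is a threefold and the special fibre a surface, so for residue characteristic $>5$ the required vanishing is furnished by \cite{Bernasconi_Kollar_vanishing_theorems_for_threefolds_in_char_>5}. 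The main obstacle is thus purely to match the cohomological input of \cite[7.8]{Kollar_Singularities_of_the_minimal_model_program} with the precise form of vanishing available in \cite{Bernasconi_Kollar_vanishing_theorems_for_threefolds_in_char_>5} and to check that the relevant sheaves satisfy its hypotheses; once this is done, reducedness of $D_\pi$ follows formally from the displayed sequence.
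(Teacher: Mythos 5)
Your preliminary reductions are correct and are consistent with what the paper establishes elsewhere: generic reducedness of $D_\pi$ via \cite[2.32]{Kollar_Singularities_of_the_minimal_model_program}, the equivalence of reducedness with $D$ being $S_2$ at the closed points of $D_\pi$, and the four-term sequence obtained by tensoring the conductor sequence of $\bar{D}\to D$ with $R/\pi$ (using $R$-flatness of $\sO_D$ and $\nu_*\sO_{\bar D}$, and reducedness of $\bar D_\pi$ from \autoref{lemma:special_fiber_is_reduced}) are all fine. But these steps only repackage the statement; the entire mathematical content of the proposition is in your last paragraph, and there you have not given an argument. Saying that one ``runs the argument of \cite[7.8]{Kollar_Singularities_of_the_minimal_model_program}'' with Bernasconi--Koll\'ar vanishing substituted for Kawamata--Viehweg is a research plan, not a proof: you do not identify which vanishing statement is needed, on which space it is applied, or why its hypotheses hold.

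Concretely, the missing ingredients are the following. (i) The argument of \cite[7.8]{Kollar_Singularities_of_the_minimal_model_program} is not a local-cohomology computation on $D$ at all: it compares the scheme-theoretic intersection $D\cap X_\pi$ with the (reduced) intersection $D'\cap X'_\pi$ on a $\bQ$-factorial crepant dlt model $\varphi\colon X'\to X$, via the Mayer--Vietoris sequences for $D\cup X_\pi$ and $D'\cup X'_\pi$; so you first need the existence of such a model, which in mixed characteristic rests on the threefold MMP of \cite{Bhatt&Co_MMP_for_3folds_in_mixed_char}, and you need the dlt case of the statement itself, which is \cite[Theorem 19]{Bernasconi_Kollar_vanishing_theorems_for_threefolds_in_char_>5} and is not formal. (ii) The descent step requires $\varphi_*\sO_{D'\cup X'_\pi}=\sO_{D\cup X_\pi}$, which in turn requires $R^1\varphi_*\sO(-D'-X'_\pi)=0$ near $X_\pi$; this is exactly where \cite[Proposition 7]{Bernasconi_Kollar_vanishing_theorems_for_threefolds_in_char_>5} enters, and its hypotheses (the relative $\bQ$-linear equivalence $-D'\sim_{\varphi,\bQ}K_{X'}+X'_\pi+E'+\Delta'$, the $S_3$ property of $\sO(-D'-mX'_\pi)$, relative nefness of $-X'_\pi$, and Grauert--Riemenschneider vanishing for the surface $(X'_\pi)^n\to X_\pi$) each have to be checked; this is where $p\geq 7$ and $\dim Y=3$ are actually used. (iii) Your reformulation in terms of $\sF[\pi]$, i.e.\ of $H^1_x(\sO_D)$, does not plug into this machinery: the comparison in \cite[7.8]{Kollar_Singularities_of_the_minimal_model_program} exhibits $\sO_{D\cap X_\pi}$ as a subsheaf of the pushforward of a reduced sheaf from the dlt model, and never passes through the normalization of $D$ or a depth estimate on $\sO_D$. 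So while your target statement is equivalent to the proposition, the route you sketch for the hard step does not connect to the tools you invoke, and the proof as written has a genuine gap at its core.
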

\begin{proof}
First assume that $(X,\Delta+D+X_\pi)$ is dlt. Then the result holds by \cite[Theorem 19 and subsequent paragraph]{Bernasconi_Kollar_vanishing_theorems_for_threefolds_in_char_>5}. 

In the general case, let $\varphi\colon (X',\Delta'+D'+E'+X_\pi')\to (X,\Delta+D+X_\pi)$ be a $\bQ$-factorial crepant dlt model. Such a model exists: log resolutions exist for pairs of dimension three over the spectrum of $R$ \cite[2.12]{Bhatt&Co_MMP_for_3folds_in_mixed_char}, and combining \cite[Theorem F]{Bhatt&Co_MMP_for_3folds_in_mixed_char} with the arguments of \cite[1.35-1.36]{Kollar_Singularities_of_the_minimal_model_program} we can run a MMP to produce a model with the desired properties. We claim that $R^1\varphi_*\sO(-D'-X_\pi')=R^1\varphi_*\sO(-D')\otimes \sO(-X_\pi)=0$ along $\Supp(X_\pi)$. This follows from \cite[Proposition 7]{Bernasconi_Kollar_vanishing_theorems_for_threefolds_in_char_>5} applied to $\varphi\colon X'\to X$, the dlt pair $(X',X_\pi'+(\Delta'+E'))$ and the $\bZ$-divisor $-D'$, since:
	\begin{enumerate}
		\item $-D'\sim_{\varphi,\bQ}K_{X'}+X'_\pi+E'+\Delta'$,
		\item $\sO(-D'-mX'_\pi)$ is $S_3$ for every $m\geq 1$ by \cite[Theorem 19]{Bernasconi_Kollar_vanishing_theorems_for_threefolds_in_char_>5},
		\item $-X'_\pi$ is relatively nef over $X$, since it is relatively trivial,
		\item strong Grauert--Riemenschneider vanishing holds for $(X_\pi')^n\to X_\pi$ since this is a birational morphism of excellent surfaces (this can be deduced from \cite[Theorem 10.4]{Kollar_Singularities_of_the_minimal_model_program}).
	\end{enumerate}
	Therefore pushing forward along $\varphi$ the exact sequence
			$$0\to \sO(-D'-X'_\pi)\to \sO_{X'}\to \sO_{D'\cup X'_\pi}\to 0,$$
	we obtain that $\varphi_*\sO_{X'}=\sO_X\to \varphi_*\sO_{D'\cup X'_\pi}$ is surjective in a neighbourhood of $X_\pi$. This map factors through $\sO_{D\cup X_\pi}$, so we deduce that
			$$\varphi_*\sO_{D'\cup X'_\pi}=\sO_{D\cup X_\pi}.$$
	We are now in position to apply the argument of \cite[Theorem 7.8]{Kollar_Singularities_of_the_minimal_model_program} to conclude that $D\cap X_\pi$ is reduced.
\end{proof}

The following lemma will be useful for the proof of \autoref{thm:commutativity_in_slc_case}. If $T$ is a scheme, $t\in H^0(T,\sO_T)$ and $\fp\in T$ is any point, we write $\res_\eta(t)$ for the image of $t$ through the natural map $H^0(T,\sO_T)\to k(\fp)$.

\begin{lemma}\label{lemma:transfert_between_nodes}
Let $X=\Spec A$ be an affine excellent demi-normal scheme with normalization $\bar{X}=\Spec\bar{A}$. Let $\eta\in X$ be a node, and assume that $\eta$ has two preimages $\xi,\xi'\in \bar{X}$. Let $V=\Spec \bar{A}/\fp_\xi$ and $V'=\Spec \bar{A}/\fp_{\xi'}$.  Then:
	\begin{enumerate}
		\item The natural maps $k(\eta)\to k(\xi)$ and $k(\eta)\to k(\xi')$ are isomorphisms. We denote by $\phi\colon k(\xi')\cong k(\xi)$ the induced isomorphism.
		\item Let $v\in \bar{A}$. If $V$ is normal, then the element $\res_\xi(v)-\phi(\res_{\xi'}(v))\in k(\xi)$ extends to a regular function on $V$.
		\item If both $V$ and $V'$ are normal, then $\phi$ extends to an isomorphism $V\cong V'$.
	\end{enumerate}
\end{lemma}
\begin{proof}
The first point follows from \autoref{lemma:properties_of_nodes}. To prove the other two, set $S=\Spec A/\fp_\eta$ and let $\Gamma$ be the main component of the fiber product $V\times_S V'$, equipped with its reduced structure. Its generic point is $(\xi,\xi')$ and its function field is $k(\Gamma)=k(\xi)\otimes_{k(\eta)}k(\xi')$. By construction the projection $\pr_1\colon \Gamma\to V$ is finite, thus closed; since its image contains the generic point $\xi$ of $V$, it is surjective. By the first point we see that $\pr_1$ induces an equality of functions fields, so it is birational. Similarly $\pr_2\colon \Gamma\to V'$ is finite surjective birational. 

Under the identification $k(\xi)\otimes_{k(\eta)}k(\xi')=k(\xi)$ induced by $\phi$, we have
		$$\pr_1^*(v)-\pr_2^*(v)|_{k(\Gamma)}=\res_\xi(v)-\phi(\res_{\xi'}(v)).$$
Assume that $V$ is normal. Then by Zariski's Main Theorem the finite surjective birational morphism $\pr_1\colon \Gamma\to V$ is an isomorphism. Thus the rational function $\res_\xi(v)-\phi(\res_{\xi'}(v))$ extends to a regular function on $V$, namely the section corresponding under $\pr_1$ to $\pr_1^*(v)-\pr_2^*(v)\in H^0(\Gamma,\sO_\Gamma)$.

Finally, if $V$ and $V'$ are normal, then the composition $\pr_2\circ\pr_1^{-1}\colon V\to V'$ gives the isomorphism extending $\phi$.
\end{proof}

\begin{proof}[Proof of \autoref{thm:commutativity_in_slc_case}]
We assume from now on that $D_\pi$ is reduced. The first important observation is that the divisor $D$ has mild singularities:

\begin{claim}\label{claim:structure_of_D}
The surface $D$ is demi-normal with singular codimension one points mapping to the generic point $\eta$ of $\Spec R$.
\end{claim}
\begin{proof}\renewcommand{\qedsymbol}{$\lozenge$}
By hypothesis $D_\pi$ is $S_1$. Since it is a hypersurface of $D$ and since $\pi$ is not a zero-divisor in $\sO_D$, we deduce that $\depth \sO_{D,x}\geq \min\{2,\dim\sO_{D,x}\}$ for every point $x$ of the special fiber. The local rings at the points of the generic fiber have dimension at most one and are reduced. Thus we obtain that $D$ is $S_2$.

Let $\xi\in D$ be a codimension one point. Then $\xi\in X$ is a codimension two point, and by \cite[Theorem 2.31]{Kollar_Singularities_of_the_minimal_model_program} we see that $D$ is at worse nodal at $\xi$. Thus $D$ is demi-normal. Assume that $\xi$ is the generic point of an irreducible component of $D_\pi$. Then $\xi$ belongs to the intersection $X_\pi\cap D$, and by \cite[Theorem 2.31]{Kollar_Singularities_of_the_minimal_model_program} again we see that $\xi$ is a regular point of $D$. Therefore, if $\xi$ is a singular point of $D$, it maps to the generic point of $\Spec R$.
\end{proof}

\begin{claim}\label{claim:nodes_are_disjoint_normal}
Write $\lfloor \Diff_{\bar{D}_\eta}(\Delta) \rfloor =\sum_\alpha p_\alpha$, where each $p_\alpha\in \bar{D}$ is a codimension one point with ideal $\fp_\alpha\subset \sO_{\bar{D}}$. Then:
	\begin{enumerate}
		\item the collection $\{p_\alpha\}$ contains all the preimages of the nodes of $D$, and
		\item the $V(p_\alpha)=\Spec\sO_{\bar{D}}/\fp_\alpha$ are normal and pairwise disjoint subschemes of $\bar{D}$.
	\end{enumerate}
\end{claim}
\begin{proof}\renewcommand{\qedsymbol}{$\lozenge$}
Recall that $(\bar{D},\Diff_{\bar{D}}(\Delta)+\bar{D}_\pi)$ is an lc surface by hypothesis. If $m\in\bar{D}$ is a preimage of a node of $D$, a local calculation shows that the divisors $V(m)$ appear with coefficient one in $\Diff_{\bar{D}}(\Delta)$, see for example \cite[Theorem 2.31.(2)]{Kollar_Singularities_of_the_minimal_model_program}. If $z\in V(p_\alpha)\cap V(p_\beta)$ then $z$ belongs to the special fiber $\bar{D}_\pi$. However this contradicts \cite[Theorem 2.31.(2)]{Kollar_Singularities_of_the_minimal_model_program}. So the $V(p_\alpha)$ are pairwise disjoint.

By \cite[Theorem 2.31.(2)]{Kollar_Singularities_of_the_minimal_model_program} again, the intersection $V(p_\alpha)\cap \bar{D}_\pi$ is regular. We deduce that the one-dimensional reduced scheme $V(p_\alpha)$ is normal around its special fiber. It is also normal along its generic fiber. Therefore it is normal everywhere.
\end{proof}

By \cite[Proposition 25]{Kollar_Quotients_by_finite_equivalence_relations} the diagram
		$$\begin{tikzcd}
		D\arrow[r, hook]\arrow[d] & X\arrow[d,"p"] \\
		W=D/R_D(\tau)\arrow[r, hook] & Y 
		\end{tikzcd}$$
is a universal pushout. Therefore the diagram
		\begin{equation}\label{eqn:pushout_over_special_point}
		\begin{tikzcd}
		D_\pi\arrow[r, hook]\arrow[d] & X_\pi\arrow[d,"p_\pi"] \\
		W_\pi\arrow[r, hook] & Y_\pi 
		\end{tikzcd}
		\end{equation}
is also a pushout.

\begin{claim}\label{claim:pushout_over_special_point}
Every scheme appearing in \autoref{eqn:pushout_over_special_point} is reduced.
\end{claim}
\begin{proof}\renewcommand{\qedsymbol}{$\lozenge$}
By assumption $D_\pi$ is reduced, and $X_\pi$ is reduced by \autoref{lemma:adjunction_route_along_special_fiber}. Assume for the moment that $Y_\pi$ is reduced. If $0\neq s\in \nil(\sO_{W_\pi})$, then by the pushout property $(s,0)\in \sO_{W_\pi}\times\sO_{X_\pi}$ gives a non-zero nilpotent element of $Y_\pi$, a contradiction. Thus $W_\pi$ is reduced.

To show that $\sO_{Y_\pi}$ is reduced, consider the exact sequence
		$$0\to \sO_Y\to p_*\sO_X\to \sQ\to 0$$
and tensor it by $R/\pi R$ over $R$. If $\Tor^R_1(R/\pi R, \sQ)=0$ then $\sO_{Y_\pi}$ embeds into $(p_\pi)_*\sO_{X_\pi}$. Since the latter is a sheaf of reduced algebras, this would entail that $\sO_{Y_\pi}$ is reduced as well. Now assume that $\Tor^R_1(R/\pi R, \sQ)\neq 0$. Then there is $q\in p_*\sO_X$ such that $q\notin \sO_Y$ but $\pi q\in \sO_Y$. But then $q\pi|_{\bar{D}}$ is $\tau$-invariant. Since $\pi|_{\bar{D}}$ is $\tau$-invariant and $\bar{D}$ is flat over $\Spec R$, it must hold that $q|_{\bar{D}}$ is $\tau$-invariant. Therefore $q\in \sO_Y$, which is a contradiction. Hence $ \Tor^R_1(R/\pi R, \sQ)=0$.
\end{proof}

We have seen in \autoref{lemma:finite_univ_homeo_on_special_fibers} that $Z\to Y_\pi$ is a finite universal homeomorphism. Hence if $X_\pi\to Y_\pi$ is weakly normal, then $Z\to Y_\pi$ is an isomorphism (see \autoref{section:weak_normality}). Therefore, by \autoref{claim:pushout_over_special_point} and \autoref{lemma:pushout_and_weak_normality}, we obtain:

\begin{observation}
To prove \autoref{thm:commutativity_in_slc_case} it is sufficient to prove that $D_\pi\to W_\pi$ is weakly normal.
\end{observation}

The situation can be slightly simplified:

\begin{claim}\label{claim:nodes_are_split}
We may (and will) assume that:
	\begin{enumerate}
		\item $k(\pi)$ is algebraically closed,
		\item any node $n\in D$ has two preimages $m,m'\in \bar{D}$, and
		\item we have canonical equalities $k(\eta)=k(m)=k(n)=k(m')$.
	\end{enumerate}	 
\end{claim}
\begin{proof}\renewcommand{\qedsymbol}{$\lozenge$}
We are studying whether $D_\pi\to W_\pi$ is weakly normal. This property descends faithfully flat covers \cite[Corollary to Proposition 1]{Yanagihara_Weakly_normal_ring_extensions}. Moreover \'{e}tale base-changes commute with normalizations and preserve slc singularities. Thus we can base-change along the strict henselization $\Spec R^\text{sh}\to \Spec R$ and assume that $k(\pi)$ is algebraically closed.

Let $n\in D$ be a node, and let $\bar{n}\in\bar{D}$ be a preimage. I claim that $k(n)=k(\bar{n})$. To show this, by \autoref{prop:compactification_slc_surface} we may assume that $D$ is proper over $\Spec R$. By \autoref{claim:structure_of_D} both $n$ and $\bar{n}$ maps to the generic point $\eta$ of $\Spec(R)$. Then $V=V(\bar{n})\subset \bar{D}$ is one-dimensional, it is normal by \autoref{claim:nodes_are_disjoint_normal}, and the morphism $V\to\Spec R$ is flat and proper. Therefore $V\to \Spec R$ is finite, and by flatness we have
		$$\length_{k(\eta)}V_{\eta}=\length_{k(\pi)}V_{\pi}.$$
Since $V_\eta$ is the spectrum of the field $k(\bar{n})$, by the valuative criterion of properness we deduce that the special fiber $V_\pi$ is connected. We have seen in the proof of \autoref{claim:nodes_are_disjoint_normal} that $V_{\pi}$ is reduced. 
Therefore $V_\pi$ is the spectrum of a finite field extension of $k(\pi)$. By the assumption on $k(\pi)$ we obtain that $\length_{k(\pi)}V_{\pi}=1$, and therefore $V_{\eta}$ is a $k(\eta)$-point. This implies that $k(n)=k(\bar{n})$. Now if $\bar{n}$ is the unique preimage of $n$, then $k(n)\subset k(\bar{n})$ is a field extension of degree $2$ (see \autoref{lemma:properties_of_nodes}), which is a contradiction. Thus $n$ has two preimages $\bar{n},\bar{n}'\in\bar{D}$, with equalities of residue fields $k(\eta)=k(n)=k(\bar{n})=k(\bar{n}')$.
\end{proof}

By \cite[Corollary to Theorem 1]{Yanagihara_Weakly_normal_ring_extensions}, weak normality of $D_\pi\to W_\pi$ is equivalent to the following property: if $x\in \sO_{D_\pi}$ is such that $x^p\in \sO_{W_\pi}$, then $x\in\sO_{W_\pi}$ already. 

Let $v\in \sO_D$ be any lift of $x$. Arguing as in the proof of \autoref{proposition:some_minimal_conditions_for_commutativity}, we find that $\tau(v)=v+b\pi$, where $b\in \sO_{\bar{D}}$ is such that $\tau(b)=-b$. Replacing $v$ by $v+\frac{b}{2}\pi$, we may assume that $v$ is $\tau$-invariant, but at the cost that $v$ only belongs to $\sO_{\bar{D}}$. However we retain the property that the reduction of $v$ modulo $\pi$ belongs to $\sO_{D_\pi}$.

The strategy is now the following: building on \autoref{corollary:descend_section_by_gluing}, descend $v+c\pi$ to $\sO_D$ for a well-chosen $c\in \sO_{\bar{D}}^+$. If we manage to do so, we are done, for then $v+c\pi\in \sO_D^+=\sO_W$ restricts to $x\in \sO_{W_\pi}$.

Thus we reduce to prove the following claim:
\begin{claim}\label{claim:correction_term_for_descent}
Under the assumptions of \autoref{claim:nodes_are_split}, given $v\in \sO_{\bar{D}}^+$ such that $v|_{\bar{D}_\pi}\in \sO_{D_\pi}$, there exists $c\in\sO_{\bar{D}}^+$ such that $v+c\pi\in\sO_D$.
\end{claim}

To fix the ideas, let us consider the two simplest cases. Recall that $\eta$ is the generic point of $\Spec R$.

\begin{example}
Assume that $(X,D+\Delta)$ is plt on the generic fiber. Then $D_\eta=\bar{D}_\eta$, and in this situation the condition of \autoref{corollary:descend_section_by_gluing} holds automatically. So $v$ descends to $\sO_D$.
\end{example}

\begin{example}\label{example:correction_over_nodes}
Assume that $D_\eta$ has a single node $n$ and $X_\eta$ is regular along $D_\eta$. This node is not contained in the support of $\Delta$, so let us assume for simplicity that $\Delta=0$. Then $\Diff_{\bar{D}_\eta}(0)=m+m'$: this divisor is preserved by $\tau_\eta$.

	Denote by $\phi\colon k(m')\overset{\sim}{\to}k(m)$ the isomorphism on residue fields given by \autoref{lemma:transfert_between_nodes}. We say that \emph{$v$ has the same residues at $m$ and $m'$} if $\fr_n(v):=\res_m(v)-\phi(\res_{m'}(v))=0$. 
		\begin{enumerate}
			\item If $v$ has the same residues at $m$ and $m'$ (this happens for example if $\tau(m)=m'$, since $v$ is $\tau$-invariant) then $v$ descends to $\sO_D$ by \autoref{corollary:descend_section_by_gluing}.
			
			\item Assume that $v$ has not the same residues at $m$ and $m'$. Then $\fr_n(v)$ is a rational function on $V(m)$. As in \autoref{lemma:transfert_between_nodes}, if $\Gamma\subset V(m)\times_{V(n)}V(m')$ is the main component with reduced structure then
			$$\pr_1^*(v)-\pr_2^*(v)=\fr_n(v) \quad \text{in}\quad k(\Gamma)=k(m)\otimes_{k(n)}k(m').$$
			Since $V(m)$ is normal by \autoref{claim:nodes_are_disjoint_normal}, the projection $\pr_1\colon \Gamma\to V(m)$ is an isomorphism. We deduce that $\fr_n(v)$ is regular on $V(m)$. Moreover $\pr_1^*(v)-\pr_2^*(v)$ vanishes modulo $\pi$, since $v|_{\bar{D}_\pi}$ belongs to $\sO_{D_\pi}$. So we obtain that $\fr_n(v)=\bar{c}\pi$ for some $\bar{c}\in \sO_{V(m)}$.
		
		 We have $\tau(m)=m$ and $\tau(m')=m'$. The isomorphism $\phi\colon k(m')\cong k(m)$ is $\tau$-equivariant (this follows for example from \autoref{claim:nodes_are_split}), and since $v$ is $\tau$-invariant we deduce that $\bar{c}\pi$ is $\tau$-invariant. Therefore $\bar{c}$ is also $\tau$-invariant. Now consider the restriction map 
				$$\varphi\colon \sO_{\bar{D}}\longrightarrow \sO_{V(m)}\oplus \sO_{V(m')}.$$
		By \autoref{claim:nodes_are_disjoint_normal} and \autoref{lemma:equivariant_Chinese_remainder_thm} below there is $c\in \sO_{\bar{D}}^+$ such that $\varphi(c)=(\bar{c},0)$. By construction $v-\bar{c}\pi$ is $\tau$-invariant and descends to $\sO_D$ thanks to \autoref{corollary:descend_section_by_gluing}.
		\end{enumerate}
\end{example}

In the second example we used the following lemma:

\begin{lemma}\label{lemma:equivariant_Chinese_remainder_thm}
Let $A$ be a Noetherian ring acted on by a finite group $G$, such that $|G|\in A^\times$. Let $\{\fp_1,\dots,\fp_n\}$ be a $G$-invariant set of prime ideals, with the property that $\fp_i+\fp_j=A$ for any $i\neq j$. Then the natural map
		$$\varphi^G\colon A^G\longrightarrow \left( \bigoplus_{i=1}^n A/\fp_i\right)^G$$
is surjective.
\end{lemma}
\begin{proof}\renewcommand{\qedsymbol}{$\lozenge$}
By the Chinese remainder theorem the natural restriction map
	$$\varphi\colon A\longrightarrow \bigoplus_{i=1}^n A/\fp_i$$
is surjective. It is easily seen to be $G$-equivariant: we denote by $\varphi^G$ the induced map between the $G$-invariant subrings. Take any $G$-invariant element $\bold{a}\in \bigoplus_i A/\fp_i$, and choose a lift $a\in A$. Then $a_\mu:=\frac{1}{|G|}\sum_{g\in G} g(a)$ is $G$-invariant and satisfies $\varphi^G(a_\mu)=\bold{a}$. This shows that $\varphi^G$ is also surjective.
\end{proof}	

Now let us treat the general case: the essential ideas are all contained in \autoref{example:correction_over_nodes} above. Let $n_1,\dots,n_r\in D_\eta$ be the nodes of $D$. Notice that $\Sing(D_\eta)=\{n_1,\dots,n_r\}$ since $D_\eta$ is a nodal curve. Let $m_i,m_i'\in\bar{D}_\eta$ be the two preimages of $n_i$. For any such preimage $m$, we define $V(m)$ as in \autoref{claim:nodes_are_disjoint_normal}. We also let $\phi_i\colon k(m_i)\cong k(m_i')$ be the canonical isomorphism of residue fields over the node $n_i$ (see \autoref{lemma:transfert_between_nodes}).

We can write
		$$\lfloor\Diff_{\bar{D}_\eta}(\Delta)\rfloor=\left(\sum_{i=1}^rm_i+m_i'\right)+\sum_{j=1}^sd_j,$$
	where the points $d_j\in\bar{D}_\eta$ belong to the locus where $\bar{D}_\eta\to D_\eta$ is an isomorphism. Since $\tau$ preserves $\Diff_{\bar{D}}(\Delta)$, on the generic fiber $\tau_\eta$ preserves $\lfloor\Diff_{\bar{D}_\eta}(\Delta)\rfloor$. Therefore the collection $\sP=\{m_i,m_i',d_j\}_{i,j}$ is a $\tau_\eta$-invariant subset of $\bar{D}_\eta$. It is also a union of equivalence classes for the relation given by the fibers of $\bar{D}_\eta\to D_\eta$, because it contains all the $m_i,m_i'$.

	The equivalence relation generated by $\tau_\eta$ and by the fibers of $\bar{D}_\eta\to D_\eta$ gives a partition $\sP=\bigsqcup_l\sP_l$ into classes that are of four possible types (up to relabelling):
	
	\vspace{0.5cm}
	\begin{center}
	\begin{tikzpicture}
	\node at (0,-1) {Type $T_0$:};
	\node[draw, circle, minimum size=1.1cm] at (2,-1) (d) {$d_{1}$};
	\node[draw, circle, minimum size=1.1cm] at (4,-1) (m1) {$m_{1}$};
	\node[draw, circle,minimum size=1.1cm] at (5.5,-1) (m1') {$m_{1}'$};
	\node at (7.5,-1) (etc) {$\cdots$};
	\node[draw, circle, minimum size=1.1cm] at (9.5,-1) (ma) {$m_{a}$};
	\node[draw, circle, minimum size=1.1cm] at (11,-1) (ma') {$m_{a}'$};
	\node[draw, circle, minimum size=1.1cm] at (13,-1) (d') {$d_{2}$};
	
	\path (d) edge[->, "$\tau_\eta$"] (m1)
		(m1) edge[-, double] (m1')
		(m1') edge[-, ->, "$\tau_\eta$"] (etc)
		(etc) edge[-,->,"$\tau_\eta$"] (ma)
		(ma) edge[-, double] (ma')
		(ma') edge[-,->,"$\tau_\eta$"] (d');

	\node at (0,-3) {Type $T_1$:};
	\node[draw, circle, minimum size=1.1cm] at (2,-3) (db) {$d_{1}$};
	\node[draw, circle, minimum size=1.1cm] at (4,-3) (m1b) {$m_{1}$};
	\node[draw, circle, minimum size=1.1cm] at (5.5,-3) (m1'b) {$m_{1}'$};
	\node at (7.5,-3) (etcb) {$\cdots$};
	\node[draw, circle, minimum size=1.1cm] at (9.5,-3) (mab) {$m_{a}$};
	\node[draw, circle, minimum size=1.1cm] at (11,-3) (ma'b) {$m_{a}'$};
	
	\path (db) edge[->, "$\tau_\eta$"] (m1b)
		(m1b) edge[-, double] (m1'b)
		(m1'b) edge[-, ->, "$\tau_\eta$"] (etcb)
		(etcb) edge[-,->,"$\tau_\eta$"] (mab)
		(mab) edge[-, double] (ma'b)
		(ma'b) edge[-, loop right, "$\tau_\eta$"] (ma'b);
	
	\node at (0,-5) {Type $T_2$:};
	\node[draw, circle, minimum size=1.1cm] at (4,-5) (m1c) {$m_{1}$};
	\node[draw, circle, minimum size=1.1cm] at (5.5,-5) (m1'c) {$m_{1}'$};
	\node at (7.5,-5) (etcc) {$\cdots$};
	\node[draw, circle, minimum size=1.1cm] at (9.5,-5) (mac) {$m_{a}$};
	\node[draw, circle, minimum size=1.1cm] at (11,-5) (ma'c) {$m_{a}'$};
	
	\path (m1c) edge[-, loop left, "$\tau_\eta$"] (m1c)
		(m1c) edge[-, double] (m1'c)
		(m1'c) edge[-, ->, "$\tau_\eta$"] (etcc)
		(etcc) edge[-,->,"$\tau_\eta$"] (mac)
		(mac) edge[-, double] (ma'c)
		(ma'c) edge[-, loop right, "$\tau_\eta$"] (ma'c);
	
	\node at (0, -7.5) {Type $T_\circlearrowleft$:};
	\node[draw, circle, minimum size=1.1cm] at (4,-7) (m1d) {$m_{1}$};
	\node[draw, circle, minimum size=1.1cm] at (5.5,-7) (m1'd) {$m_{1}'$};
	\node at (7.5,-7) (etcd) {$\cdots$};
	\node[draw, circle, minimum size=1.1cm] at (9.5,-7) (mad) {$m_{a}$};
	\node[draw, circle, minimum size=1.1cm] at (11,-7) (ma'd) {$m_{a}'$};
	
	\path (m1d.south) edge[->, bend right, "$\tau_\eta$"] (ma'd.south) 
	(m1d) edge[-, double] (m1'd)
	(m1'd) edge[-, ->, "$\tau_\eta$"] (etcd)
	(etcd) edge[-,->,"$\tau_\eta$"] (mad)
	(mad) edge[-, double] (ma'd);
	\end{tikzpicture}
	\end{center}

	By \autoref{claim:nodes_are_split}, the structural map $k(\eta)\hookrightarrow k(m)$ is an equality for any $m\in\{m_i,m_i'\}$. Thus if $\tau_\eta(m_i')=m_j$ then the induced isomorphism on residue fields $\tau_\eta\colon k(m_i')\cong k(m_j)$ is the unique isomorphism that makes the diagram
		$$\begin{tikzcd}
		k(m_i')\arrow[rr, dotted, "\tau_\eta"]\arrow[dr, "\cong" below left] && k(m_j)\arrow[dl, "\cong"]\\
		& k(\eta) &
		\end{tikzcd}$$ 
	commute.
	
	Now consider $v\in \sO_{\bar{D}}^+$. We write $\res_p(v)$ for the image of $v$ in the residue field of any $p\in \sP$. As in \autoref{example:correction_over_nodes}, for any $i$ we can write
			$$\res_{m_i'}(v)-\phi_i(\res_{m_i}(v))=\gamma_i\pi\quad \text{for some }\gamma_i\in\sO_{V(m_i')}.$$
	In general these differences are non-zero, and we look for a correction term that is $\tau$-invariant. Take an equivalence class $\sP_l\subset \sP$, and say that the preimages of nodes contained in $\sP_l$ are $m_1,m_1',\dots,m_a,m_a'$. Up to relabelling we may assume that 
			$$\tau_\eta(m_1')=m_2,\quad \tau_\eta(m_2')=m_3,\quad \dots \quad \tau_\eta(m_{a-1}')=m_a,$$
		or in picture that

		\begin{center}\begin{tikzpicture}
		\node[draw, circle, minimum size=1.1cm] at (4,0) (m1b) {$m_{1}$};
	\node[draw, circle, minimum size=1.1cm] at (5.5,0) (m1'b) {$m_{1}'$};
	\node at (7.5,0) (etcb) {$\cdots$};
	\node[draw, circle, minimum size=1.1cm] at (9.5,0) (mab) {$m_{a}$};
	\node[draw, circle, minimum size=1.1cm] at (11,0) (ma'b) {$m_{a}'$};
	
		\path 
		(m1b) edge[-, double] (m1'b)
		(m1'b) edge[-, ->, "$\tau_\eta$"] (etcb)
		(etcb) edge[-,->,"$\tau_\eta$"] (mab)
		(mab) edge[-, double] (ma'b);
		\end{tikzpicture}
		\end{center}	
		so that the values $\tau_\eta(m_1),\tau_\eta(m_a')$ determine if $\sP_l$ is of type $T_0,T_1,T_2$ or $T_\circlearrowleft$.
	For $i=1,\dots,a$ we define inductively $c_i\in \sO_{V(m_i')}$ by
		\begin{equation}\label{eqn:def_correction_term}
		c_1=\gamma_1,\quad c_{i+1}\pi=\phi_{i+1}(\tau_\eta(c_i\pi))-\gamma_{i+1}\pi \quad (1\leq i\leq a-1).
		\end{equation}
	We think of $c_{i+1}$ as a correction term for the residue of $v$ at the point $m_{i+1}'$. Indeed, by construction we have
			\begin{equation}\label{eqn:correction_term}
			\phi_{i+1}(\res_{m_{i+1}}(v)+\tau_\eta(c_{i+1}\pi))=\res_{m_{i+1}'}(v)+c_{i+1}\pi.
			\end{equation}
	\begin{claim}
	These $c_i\in \sO_{V(m_i')}$ exist. 
	\end{claim}
	\begin{proof}\renewcommand{\qedsymbol}{$\lozenge$}
	We check this inductively on $i$. It is clear for $i=1$. Assuming that $c_i\in\sO_{V(m_i')}$ is constructed, it is sufficient to show that
		$$\phi_{i+1}(\tau_\eta(c_i))-\gamma_{i+1}\in k(m_{i+1}) \text{ is a regular function on }V(m_{i+1}).$$ 
		Since $\gamma_{i+1}$ is a regular function, we reduce to show that $\phi_{i+1}(\tau_\eta(c_i))$ is a regular function on $V(m_{i+1})$. Now $\tau_\eta$ is the extension of $\tau\colon V(m_i')\cong V(m_{i+1})$, and by \autoref{lemma:transfert_between_nodes} $\phi_{i+1}$ extends to an isomorphism $V(m_{i+1})\cong V(m_{i+1}')$. Since $c_i$ is regular on $V(m_i')$ we deduce that $\phi_{i+1}(\tau_\eta(c_i))$ is regular on $V(m_{i+1}')$.
	\end{proof}

	\noindent For each $\sP_l$, we record the above previous construction using a vector that we call $\bold{c}_{\sP_l}$:
		\begin{enumerate}
			\item If $\sP_l$ is of type $T_2$ or $T_\circlearrowleft$ we let
				$$\bold{c}_{\sP_l}=\left(0,c_1\pi,\tau(c_1\pi),\dots, \tau(c_{a-1}\pi), c_a\pi)\right)\in 
			\bigoplus_{i=1}^a\sO_{V(m_i)}\oplus \sO_{V(m_i')}.$$
			\item If $\sP_l$ is of type $T_1$ we let
				$$\bold{c}_{\sP_l}=\left(0,0,c_1\pi,\tau(c_1\pi),\dots, \tau(c_{a-1}\pi), c_a\pi\right)\in 
			\sO_{V(d_1)}\oplus \bigoplus_{i=1}^a\sO_{V(m_i)}\oplus \sO_{V(m_i')}. $$
			\item If $\sP_l$ is of type $T_0$ we let
				$$\bold{c}_{\sP_l}=\left(0,c_1\pi,\tau(c_1\pi),\dots, \tau(c_{a-1}\pi), c_a\pi,\tau(c_a\pi)\right)\in 
			\sO_{V(d_1)}\oplus \left(\bigoplus_{i=1}^a\sO_{V(m_i)}\oplus \sO_{V(m_i')}\right) \oplus \sO_{V(d_2)}.$$
	\end{enumerate}

	\begin{claim}\label{claim:correction_vector_is_invariant}
	Each vector $\bold{c}_{\sP_l}$ is $\tau$-invariant.
	\end{claim}	
	\begin{proof}\renewcommand{\qedsymbol}{$\lozenge$}
	It suffices to check $\tau_\eta$-invariance at the generic points. This is clear if $\sP_l$ is of type $T_0, T_1$ or $T_2$ (recall that, under the assumptions of \autoref{claim:nodes_are_split}, $\tau_\eta$ acts trivially on the residue field of a fixed point). If $\sP_l$ is of type $T_\circlearrowleft$, then we only have to check that $\tau_\eta(c_a\pi)=0$. For simplicity, identify the residue field of every point of $\sP_l$ with $k(\eta)$. Then by construction we have
		\begin{eqnarray*}
		\res_{m_1}(v)&=&\res_{m_1'}(v)+c_1\pi \\
		& = &\res_{m_2}(v)+\tau_\eta(c_1\pi)\\
		&=&\dots \\
		&=&\res_{m_{a-1}'}(v)+c_{a-1}\pi\\
		&=&\res_{m_a}(v)+\tau_\eta(c_{a-1}\pi).
		\end{eqnarray*}
	On the other hand, since $\tau(m_1)=m_{a}'$, by $\tau$-invariance of $v$ we have $\res_{m_1}(v)=\res_{m_a'}(v)$.
		In particular 
		$$\tau_\eta(c_{a-1}\pi)=\res_{m_a'}(v)-\res_{m_a}(v)=\gamma_a\pi,$$ 
		and by \autoref{eqn:def_correction_term} this implies that $c_a\pi=0$.
	\end{proof}
		
	We can finally conclude the proof of \autoref{thm:commutativity_in_slc_case}. Consider the concatenated vector
			$\bold{c}=\left(\bold{c}_{\sP_l}\right)_l$.
		Up to permutation of its entries, it belongs to $\bigoplus_{p\in\sP} \sO_{V(p)}$. By \autoref{claim:correction_vector_is_invariant} it is $\tau$-invariant. Therefore by \autoref{claim:nodes_are_disjoint_normal} and \autoref{lemma:equivariant_Chinese_remainder_thm}, there exists $c\in \sO_{\bar{D}}^+$ that reduces to $\bold{c}$. The element $v+c\pi$ is $\tau$-invariant and by \autoref{eqn:correction_term} it satisfies the descent condition of \autoref{corollary:descend_section_by_gluing}. This proves \autoref{claim:correction_term_for_descent}, and \autoref{thm:commutativity_in_slc_case} follows as already observed.
\end{proof}

\begin{remark}[Higher dimensions]\label{rmk:cond_for_comm_thm}
Let me say a word about a potential analogue of \autoref{thm_intro:commutativity_for_surfaces} in higher dimensions. Let $(X,\Delta+X_\pi)\to \Spec R$ be a relative slc pair of arbitrary dimension, with normalization $(\bar{X},\bar{D}+\bar{\Delta}+\bar{X}_\pi)$. There are two main inputs in our proof of \autoref{thm_intro:commutativity_for_surfaces}: the reducedness of the scheme-theoretic intersection $\bar{D}\cap \bar{X}_\pi$, and the game of liftings explained in \autoref{claim:correction_term_for_descent}.
	\begin{itemize}
		\item We have obtained the reducedness of $\bar{D}_\pi$ as a combination of properties of dlt pairs and vanishing theorems for birational morphisms. 
		\item The proof of \autoref{claim:correction_term_for_descent} is essentially a combinatorial game, once \autoref{claim:nodes_are_disjoint_normal} is established. In higher dimensions, we cannot expect the closures in $\bar{D}^n$ of the preimages of the nodes of $\bar{D}$ to be disjoint or normal. But as before, we it should be possible to reduce to dlt pairs: suppose $\varphi\colon (\bar{Y},E+\bar{D}_Y+\bar{\Delta}_Y+\bar{Y}_\pi)\to (\bar{X},\bar{D}+\bar{\Delta}+\bar{X}_\pi)$ is a crepant model with the properties that
			\begin{enumerate}
				\item the irreducible components of the preimages in $\bar{D}_Y^n$ of the nodes on $\bar{D}_Y$ are normal, and their union is semi-normal, and
				\item $\varphi_*\sO_{\bar{D}_Y}=\sO_{\bar{D}}$.
			\end{enumerate}
	Then our combinatorial game can be played on $\bar{D}_Y^n$ (the non-disjointness is not a problem thanks to properties of semi-normal schemes \cite[Lemma 10.19]{Kollar_Singularities_of_the_minimal_model_program}), and the relevant section can be pushed down to $\bar{D}$ by the second property. 
	\end{itemize}
Such a proof is available in characteristic $0$ (the reducedness condition holds by \cite[Corollary 3]{Kollar_Seminormal_log_center_and_deformations}, and the existence of a nice dlt crepant model by \cite[10.37, 4.20]{Kollar_Singularities_of_the_minimal_model_program}. This gives an alternative proof of a special case of  \cite[7.21]{Kollar_Singularities_of_the_minimal_model_program}. On the other hand, the validity of these auxiliary results in higher dimension in positive or mixed characteristic is unclear at the moment.
\end{remark}

\subsection{Serre properties of the fibers}\label{subsection:Serre_properties}

In this subsection we consider the Serre conditions $S_r$ on the special fiber $Y_\pi$ (we keep the notations of \autoref{notation:slc_family_II}). Since we always assume $Y\to \Spec R$ to be flat, $\pi$ is not a zero-divisor in $\sO_Y$ and so
		$$Y_\pi \text{ is }S_r \quad \Longleftrightarrow\quad Y \text{ is } S_{r+1} \text{ along }Y_\pi.$$
	The analog equivalence holds for $X_\pi\subset X$, since $X$ is automatically flat over $\Spec R$, by \cite[III.9.7]{Hartshorne_Algebraic_Geometry} and the fact that $p\colon X\to Y$ is finite.  Similarly, an analog equivalence holds for $D$ and $D/R_D(\tau)$ as soon as one of them (equivalently both of them) is flat over $\Spec R$.
	
	\begin{remark}
	By \cite[7.8.6.iii]{EGA_IV.2} the $S_i$ loci of excellent schemes are open. If $Y\to \Spec R$ is proper (or simply a closed morphism), a closed subset that is disjoint from $Y_\pi$ must be empty. In this case the previous equivalence becomes: $Y_\pi$ is $S_r$ if and only if $Y$ is $S_{r+1}$, and similarly for $X,D$ and $D/R_D(\tau)$ under suitable hypothesis.
	\end{remark}
	
	To approach the Serre properties of $Y$, we rely on the fact that it can be described as a universal push-out \cite[Proposition 25]{Kollar_Quotients_by_finite_equivalence_relations}. The main technical tool is the following lemma.

\begin{lemma}\label{lemma:homological_lemma}
Consider a commutative square of complexes of abelian groups
		$$\begin{tikzcd}
		A^\bullet \arrow[r, "\alpha^\bullet"] \arrow[d, "\beta^\bullet"] & B^\bullet \arrow[d, "q^\bullet"] \\
		C^\bullet \arrow[r, "p^\bullet"] & D^\bullet 
		\end{tikzcd}$$
which is a pullback diagram at every degree. Then for every $j$ there exists a natural map
		$$\xi^j\colon h^j(A^\bullet )\to h^j(B^\bullet )\times_{h^j(D^\bullet)}h^j(C^\bullet )$$
	which is surjective. If $h^{j-1}(D^\bullet)=0$ and $p^{j-2}$ is surjective, then $\xi^j$ is bijective.
\end{lemma}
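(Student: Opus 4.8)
The plan is to realize $\xi^j$ as an edge map in a Mayer--Vietoris long exact sequence and to pin down the two stated hypotheses as exactly what is needed to kill the relevant connecting homomorphism. First I would record that $\xi^j$ is well defined: the commutativity $q^\bullet\circ\alpha^\bullet=p^\bullet\circ\beta^\bullet$ shows that for a cocycle $a\in A^j$ the pair $\big(h^j(\alpha)[a],h^j(\beta)[a]\big)$ lies in the fibre product $h^j(B)\times_{h^j(D)}h^j(C)$, and this assignment depends only on the class $[a]$. Because the square is a degreewise pullback, the sequence of complexes
\[
0\longrightarrow A^\bullet \xrightarrow{(\alpha,\beta)} B^\bullet\oplus C^\bullet \xrightarrow{\ \psi\ } D^\bullet,\qquad \psi(b,c)=q(b)-p(c),
\]
is exact at the first two spots; in the situations of interest one leg of the square (a restriction map of the conductor/push-out square) is degreewise surjective, so $\psi$ is surjective and the displayed sequence is short exact. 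Its long exact cohomology sequence is the backbone of the argument.

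Surjectivity of $\xi^j$ is then exactness at the middle term: the image of $(h^j(\alpha),h^j(\beta))$ equals the kernel of the induced difference map $h^j(B)\oplus h^j(C)\to h^j(D)$, and that kernel is precisely the fibre product. Equivalently, and this is the form I would actually write out, given a class in the fibre product I would lift it to cocycles $b\in B^j$, $c\in C^j$ with $q(b)-p(c)=d_D e$ for some $e\in D^{j-1}$; writing $e$ through the surjective leg and subtracting the corresponding coboundary from $b$ (or $c$) arranges $q(b')=p(c')$ on the nose, so that $(b',c')$ is a genuine cocycle of $A^j$ lifting the given class.

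For bijectivity I would give the injectivity chase directly, since this is where both extra hypotheses enter. Suppose $a=(b,c)\in Z^j(A)$ has $\xi^j[a]=0$, so $b=d_Bu$ and $c=d_Cw$ for some $u\in B^{j-1}$, $w\in C^{j-1}$. From $q(b)=p(c)$ one gets $d_D\big(q(u)-p(w)\big)=0$, i.e. $q(u)-p(w)\in Z^{j-1}(D)$. Using $h^{j-1}(D^\bullet)=0$ I can write $q(u)-p(w)=d_D v$ with $v\in D^{j-2}$, and using surjectivity of $p^{j-2}$ I may take $v=p(w_2)$ with $w_2\in C^{j-2}$; then $q(u)-p(w)=p(d_C w_2)$. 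Setting $w':=w+d_Cw_2$ gives $p(w')=q(u)$ and $d_Cw'=c$, so $(u,w')$ lies in $A^{j-1}=B^{j-1}\times_{D^{j-1}}C^{j-1}$ and $d_A(u,w')=(b,c)=a$. Hence $[a]=0$, proving injectivity and therefore bijectivity.

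The one genuinely delicate point --- and the reason the hypotheses are phrased the way they are --- is the connecting homomorphism $\delta\colon h^{j-1}(D)\to h^j(A)$ of the long exact sequence, whose image is exactly $\ker\xi^j$. The computation above is the hands-on verification that $\delta$ vanishes under the assumptions $h^{j-1}(D^\bullet)=0$ and $p^{j-2}$ surjective: the first hypothesis turns the obstruction cocycle $q(u)-p(w)$ into a coboundary of $D^\bullet$, while the surjectivity of $p^{j-2}$ is precisely what lets that coboundary be realized inside $C^\bullet$, so that it can be absorbed into an element of the pullback $A^{j-1}$. I would flag that the only place degreewise surjectivity of a leg is used is in the surjectivity half: without it one must replace $D^\bullet$ by the image of $\psi^\bullet$, and the clean identification of the image of $(h^j(\alpha),h^j(\beta))$ with the fibre product taken over $h^j(D)$ can fail.
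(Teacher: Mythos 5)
Your argument is at bottom the same cocycle chase as the paper's: the injectivity half --- correct the obstruction $q^{j-1}(u)-p^{j-1}(w)$ using $h^{j-1}(D^\bullet)=0$, lift the primitive through $p^{j-2}$, absorb it into $w$, and glue with $u$ in the pullback $A^{j-1}$ --- is step for step the paper's proof, and the Mayer--Vietoris packaging of the surjectivity half is a cosmetic reorganization of the same computation.

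The substantive point is the one you flag at the end, and you are right to insist on it: surjectivity of $\xi^j$ does \emph{not} follow from the degreewise pullback hypothesis alone. The paper proves that $\ker(d^j_A)\to\ker(d^j_B)\times_{\ker(d^j_D)}\ker(d^j_C)$ is surjective and then asserts that ``it is easy to see'' this descends to a surjection onto $h^j(B^\bullet)\times_{h^j(D^\bullet)}h^j(C^\bullet)$; that descent is exactly where one must replace a pair of cocycles whose images in $D^j$ agree only up to a coboundary $d_D(e)$ by a pair agreeing on the nose, and this requires lifting $e\in D^{j-1}$ through $p^{j-1}$ or $q^{j-1}$ --- precisely the ``surjective leg'' you invoke. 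Without it the statement fails: take $B^\bullet=\mathbb{Z}$ concentrated in degree $j$, $C^\bullet=0$, $D^\bullet=[\mathbb{Z}\xrightarrow{\;\mathrm{id}\;}\mathbb{Z}]$ in degrees $j-1,j$, with $q^j=\mathrm{id}$ and $p^\bullet=0$. Then $A^\bullet=0$, while $h^j(B^\bullet)\times_{h^j(D^\bullet)}h^j(C^\bullet)=\mathbb{Z}\times_{0}0\cong\mathbb{Z}$, so $\xi^j$ is not surjective; note that here $h^{j-1}(D^\bullet)=0$ and $p^{j-2}$ is surjective, so even the bijectivity hypotheses do not rescue the claim. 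In the paper's application the maps $p^{i_0,\dots,i_s}$ are surjective in every degree because $\sW\hookrightarrow Y$ is a closed immersion (the paper records this), so \autoref{proposition:Serre_property_of_quotient} is unaffected; but the lemma should carry the hypothesis that $p^{j-1}$ (or $q^{j-1}$) is surjective for the surjectivity claim, exactly as your write-up does.
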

\begin{proof}
Let us embark for some diagram-chasing in
		$$\begin{tikzcd}
		&&& A^{j-1}\arrow[d] \arrow[ddlll]\arrow[ddrrr]&&&& \\
		&&& A^j\arrow[dll, "\alpha^j" above left]\arrow[drr, "\beta^j"] &&&& \\
		B^{j-1}\arrow[ddrrr, "q^{j-1}" below left]\arrow[r] & B^j\arrow[drr]&&&& C^j\arrow[dll] & C^{j-1}\arrow[ddlll]\arrow[l] & C^{j-2}\arrow[dddllll, "p^{j-2}"]\arrow[l]\\
		&&& D^{j} &&&& \\
		&&& D^{j-1}\arrow[u] &&&& \\
		&&& D^{j-2}\arrow[u] &&&&
		\end{tikzcd}$$
	We denote the differential maps of $A^\bullet$ by $d_A^j\colon A^j\to A^{j+1}$, and similarly for the other complexes.
	
If $x\in \ker(d^j_A)$, then clearly $\alpha^j(x)\in\ker(d^j_B)$ and $\beta^j(x)\in \ker(d^j_C)$. This defines a map
		$$\xi^j_0\colon \ker(d^j_A)\to \ker(d^j_B)\times_{\ker(d^j_D)}\ker(d^j_C).$$
	We claim that $\xi^j_0$ is surjective. Indeed, a pair $(s,t)\in \ker(d^j_B)\times_{\ker(d^j_D)}\ker(d^j_C)$ gives, by the pullback property, a unique element $x\in A^j$ such that $\alpha^j(x)=s$ and $\beta^j(x)=t$. Since 
			$$\alpha^{j+1}(d_A^jx)=d^j_B(\alpha^j(x))=0,\quad \beta^{j+1}(d_A^jx)=d_B^j(\beta^j(x))=0$$
	the pullback property ensures that $d^j_A(x)=0$. Thus $\xi^j_0$ is surjective.
	
	It is easy to see that $\xi_j'$ descends to a surjective map
			$$\xi^j \colon h^j(A^\bullet)\to h^j(B^\bullet)\times_{h^j(D^\bullet)}h^j(C^\bullet).$$
	Now let us discuss the injectivity of $\xi^j$. Let $x\in \ker(d^j_A)$ be such that $\alpha^j(x)=d^{j-1}_B(y)$ and $\beta^j(x)=d^{j-1}_C(z)$ for some $y\in B^{j-1}$ and $z\in C^{j-1}$. We investigate whether $x\in \im(d^{j-1}_A)$. The element
		$\partial:=q^{j-1}(y)-p^{j-1}(z)$
is non-zero in general, but it belongs to $\ker(d^{j-1}_D)$. Thus if $h^{j-1}(D^\bullet)=0$, there exists $\partial'\in D^{j-2}$ such that $d^{j-2}_D(\partial')=\partial$. If $p^{j-2}$ is surjective, there exists $\delta'\in C^{j-2}$ such that $p^{j-2}(\delta')=\partial'$. Set $z_1:=z+d^{j-2}_C(\delta')$. We have
		$$p^{j-1}(z_1)=p^{j-1}(z)+\delta=q^{j-1}(y),$$
thus by the pullback property the pair $(y,z_1)$ corresponds to a unique $x'\in A^{j-1}$. Since $\alpha^j(d^{j-1}_A(x'))=d^{j-1}_B(y)=\alpha^j(x)$ and $\beta^j(d^{j-1}_A(x'))=d_C^{j-1}(z_1)=d_C^{j-1}(z)=\beta^j(x)$, the pullback property shows that $x=d_A^{j-1}(x')$. This concludes the proof.
\end{proof}

\begin{lemma}\label{lemma:local_cohomology_on_semi_local_ring}
Let $A$ be a Noetherian semi-local ring with Jacobson ideal $I=\fm_1\cap \dots\cap \fm_n$. Let $M$ be an $A$-module. Then $H^r_I(M)=0$ if and only if $H^r_{\fm_iA_{\fm_i}}(M_{\fm_i})=0$ for every $\fm_i$.
\end{lemma}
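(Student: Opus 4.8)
The plan is to reduce the statement to the standard fact that local cohomology commutes with localization, after observing that once we localize at one of the $\fm_i$ the ideal $I$ becomes primary to the maximal ideal. The semi-local hypothesis enters only to control which maximal ideals can occur.

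First I would record the structural input. Since $A$ is semi-local and $I=\fm_1\cap\cdots\cap\fm_n$ is its Jacobson radical, the ideals $\fm_1,\dots,\fm_n$ are \emph{precisely all} the maximal ideals of $A$. Being distinct maximal ideals they satisfy $\fm_j\not\subseteq\fm_i$ for $j\neq i$, and the only primes of $A$ containing $I$ are the $\fm_i$ themselves.

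Next I would use the description $H^r_I(M)=\varinjlim_t \Ext^r_A(A/I^t,M)$. As $A$ is Noetherian, each $A/I^t$ is finitely presented, so for the flat localization map $A\to A_{\fm_i}$ there are natural isomorphisms $\Ext^r_A(A/I^t,M)_{\fm_i}\cong \Ext^r_{A_{\fm_i}}(A_{\fm_i}/I^tA_{\fm_i},M_{\fm_i})$. Passing to the colimit, which is exact and commutes with localization, yields $H^r_I(M)_{\fm_i}\cong H^r_{IA_{\fm_i}}(M_{\fm_i})$. Now for $j\neq i$ the relation $\fm_j\not\subseteq\fm_i$ gives $\fm_jA_{\fm_i}=A_{\fm_i}$, so that $IA_{\fm_i}=\fm_iA_{\fm_i}$. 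Combining these identifications I obtain
$$H^r_I(M)_{\fm_i}\cong H^r_{\fm_iA_{\fm_i}}(M_{\fm_i}).$$
With this in hand the lemma follows at once: if $H^r_I(M)=0$ then all its localizations vanish, which is the forward direction; conversely, if $H^r_{\fm_iA_{\fm_i}}(M_{\fm_i})=0$ for every $i$, then $H^r_I(M)_{\fm_i}=0$ at every maximal ideal of $A$, and a module over a commutative ring is zero exactly when all its localizations at maximal ideals vanish, so $H^r_I(M)=0$.

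I do not expect a serious obstacle here: the only nontrivial ingredient is the commutation of local cohomology with the flat base change $A\to A_{\fm_i}$, which is standard given the Noetherian hypothesis; alternatively one may argue with the \v{C}ech complex on a finite generating set of $I$, which manifestly commutes with localization. The role of the semi-local hypothesis is solely to guarantee that $\fm_1,\dots,\fm_n$ exhaust the maximal ideals, so that local vanishing at exactly these ideals forces global vanishing.
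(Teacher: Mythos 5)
Your proposal is correct and follows essentially the same route as the paper: both arguments combine flat base change for local cohomology along $A\to A_{\fm_i}$ with the identification $IA_{\fm_i}=\fm_iA_{\fm_i}$, and then conclude by testing vanishing at all maximal ideals of the semi-local ring. Your justification of $IA_{\fm_i}=\fm_iA_{\fm_i}$ via $\fm_jA_{\fm_i}=A_{\fm_i}$ is a minor cosmetic variant of the paper's appeal to coprimality, and the Ext-colimit (or \v{C}ech) argument you sketch is exactly the standard content behind the paper's one-line invocation of flatness.
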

\begin{proof}
Since the localization maps $A\to A_{\fm_i}$ are flat, we have isomorphism s
		$$H^r_I(M)\otimes_AA_{\fm_i}\cong H^r_{IA_{\fm_i}}(M_{\fm_i})\quad \text{for every }i.$$
Since the $\fm_i$ are pairwise coprime, we have $I=\fm_1\cdots\fm_n$ and therefore $IA_{\fm_i}=\fm_iA_{\fm_i}$. Thus the result follows from the fact that a module is trivial if and only if its localization at every maximal ideal is trivial.
\end{proof}

\begin{proposition}\label{proposition:Serre_property_of_quotient}
Let $(X,D,\tau)\to Y$ be as in \autoref{notation:slc_family_II} (flatness of $Y\to \Spec R$ is not necessary). Assume in addition that $D$ is $S_r$.
	\begin{enumerate}
		\item If $X$ and $D/R_D(\tau)$ are $S_{r+1}$, then $Y$ is $S_{r+1}$.
		\item If $D$ is $S_{r+1}$, then the converse implication in $(a)$ holds.
	\end{enumerate}
\end{proposition}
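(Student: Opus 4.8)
The plan is to exploit the description of $Y$ as the universal pushout of $X\hookleftarrow D\to W$, where $W:=D/R_D(\tau)$ is the conductor of $Y$; on structure sheaves \cite[Proposition 25]{Kollar_Quotients_by_finite_equivalence_relations} gives a pullback (conductor) square
\begin{equation*}
\begin{tikzcd}
\sO_Y \arrow[r]\arrow[d] & \sO_W \arrow[d] \\
\sO_X \arrow[r] & \sO_D
\end{tikzcd}
\end{equation*}
in which $\sO_X\to\sO_D$ is surjective (as $D\hookrightarrow X$ is closed) while $\sO_W=\sO_D^+\hookrightarrow\sO_D$ is injective. Since the Serre conditions are local and the $S_i$-loci are open in the excellent setting, I would fix a point $y\in Y$, write $(\sO_{Y,y},\fm)$ for the local ring and $d:=\dim\sO_{Y,y}$, and recall that along $y$ the conductors $D$ and $W$ are pure of dimension $d-1$. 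The task is then to compare the vanishing of $H^i_\fm(\sO_{Y,y})$ for $i<\min(r+1,d)$ (which is exactly $S_{r+1}$ for $\sO_{Y,y}$) with the corresponding vanishing for $X$, $D$ and $W$.

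The main tool is local cohomology computed by the stable Koszul (\v{C}ech) complex $\check{C}^\bullet$ attached to a system of parameters of $\sO_{Y,y}$. This functor is exact and is assembled from localizations, so it carries the conductor square to a commutative square of complexes that is a pullback in each degree and whose bottom arrow $\check{C}^\bullet(\sO_X)\to\check{C}^\bullet(\sO_D)$ is degreewise surjective. Feeding this into \autoref{lemma:homological_lemma} (with $A^\bullet=\check{C}^\bullet(\sO_Y)$, top-right $\check{C}^\bullet(\sO_W)$, bottom-left $\check{C}^\bullet(\sO_X)$ and bottom-right $\check{C}^\bullet(\sO_D)$) produces, for every $j$, a natural map
\begin{equation*}
\xi^j\colon H^j_\fm(\sO_Y)\longrightarrow H^j_\fm(\sO_W)\times_{H^j_\fm(\sO_D)}H^j_\fm(\sO_X)
\end{equation*}
that is always surjective and becomes bijective as soon as $H^{j-1}_\fm(\sO_D)=0$ (the other hypothesis of the lemma, degreewise surjectivity of the arrow out of $\check{C}^\bullet(\sO_X)$, holds automatically). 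Moreover, since $p\colon X\to Y$ is finite, the local cohomologies of the semi-local rings $\sO_X\otimes_{\sO_Y}\sO_{Y,y}$ and $\sO_D\otimes_{\sO_Y}\sO_{Y,y}$ split over the points lying above $y$ by \autoref{lemma:local_cohomology_on_semi_local_ring}, so their vanishing in a given range is equivalent to the Serre condition for $X$, respectively $D$, at those points.

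With this dictionary the two implications reduce to bookkeeping with vanishing ranges. For (1), assume $X,W$ are $S_{r+1}$ and $D$ is $S_r$: for $j<\min(r+1,d)$ the hypothesis on $D$ gives $H^{j-1}_\fm(\sO_D)=0$, so $\xi^j$ is an isomorphism; using $H^j_\fm(\sO_X)=0$ the target collapses to $\ker\!\big(H^j_\fm(\sO_W)\to H^j_\fm(\sO_D)\big)$, which is killed by $S_{r+1}$ of $W$ for $j<\min(r+1,d-1)$, whence $H^j_\fm(\sO_Y)=0$ and $Y$ is $S_{r+1}$. For (2), assume in addition that $D$ is $S_{r+1}$ and that $Y$ is $S_{r+1}$: the same isomorphism together with $H^j_\fm(\sO_Y)=0$ forces the fibre product to vanish, and the enlarged range $H^j_\fm(\sO_D)=0$ for $j<\min(r+1,d-1)$ provided by $S_{r+1}$ of $D$ lets me separate the two factors and deduce that $X$ and $W$ are $S_{r+1}$.

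The step I expect to be genuinely delicate is the boundary degree $j=d-1$, which enters precisely when $d\le r+1$. There $d-1$ is the dimension of both $D$ and $W$, so $H^{d-1}_\fm(\sO_D)$ is the (nonzero) top local cohomology and is controlled by neither Serre hypothesis on $D$; the identities above then pin down $H^{d-1}_\fm(\sO_Y)$ only through the comparison map $H^{d-1}_\fm(\sO_W)\to H^{d-1}_\fm(\sO_D)$ in case (1), respectively $H^{d-1}_\fm(\sO_X)\to H^{d-1}_\fm(\sO_D)$ in case (2). The hard part will be to control these boundary maps — concretely, that the first is injective and that the second cannot contribute. I would extract this from the finite-quotient structure of $\nu\colon D\to W$ through Grothendieck duality for finite morphisms: injectivity of the top local cohomology map dualizes to surjectivity of the trace $\nu_*\omega_D\to\omega_W$, which is available because $\omega_W$ is obtained from $\omega_D$ by descent along $\nu$, and the companion statement for the conductor square transfers to the comparison coming from $\sO_X\to\sO_D$. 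It is exactly at this boundary that the precise calibration of the hypotheses (that $D$ is $S_r$, and $S_{r+1}$ in the converse) is used, so that all lower degrees are handled formally by $\xi^j$ and only the top-degree duality input remains.
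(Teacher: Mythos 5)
Your proposal reproduces the paper's proof almost step for step: localize at $y\in Y$, use the universal push-out square of \cite[Proposition 25]{Kollar_Quotients_by_finite_equivalence_relations} to get a degreewise pullback of alternating \v{C}ech complexes on generators of $\fm_y$, apply \autoref{lemma:homological_lemma} to obtain the surjection (resp.\ bijection) of $H^j_{\fm_y}(\sO_{Y,y})$ onto the fibre product of local cohomologies, translate back to Serre conditions on $X$ and $D$ via \autoref{lemma:local_cohomology_on_semi_local_ring}, and do the bookkeeping in both directions. One small point in your favour: you correctly locate the surjectivity hypothesis of \autoref{lemma:homological_lemma} in the arrow $\check{C}^\bullet(\sO_X)\to\check{C}^\bullet(\sO_D)$ coming from the closed embedding $D\hookrightarrow X$; the paper's write-up instead points at $\sO_Y\twoheadrightarrow\sO_W$, which is an arrow \emph{out of} the top-left corner and is not the map $p^\bullet$ the lemma asks about, so your reading is the right one.

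The only divergence is your ``boundary degree'' discussion, and there your proposal is the weak link. The paper does not engage with this case at all: its proof treats $S_r$ of $D$ as the untruncated vanishing $H^j=0$ for $j<r$ and $S_{r+1}$ of $X$ and $W$ as vanishing for $j\le r$, so the formal bookkeeping closes with no top-degree input. If one insists on the $\min(\cdot,\dim)$ truncation at low-dimensional points, your identification of $j=d-1$ as the problematic degree is accurate, but your proposed resolution cannot be taken for granted: surjectivity of the trace $\nu_*\omega_D\to\omega_W$ is not ``available by descent'', since $\sO_W\hookrightarrow\sO_D$ admits no Reynolds-type splitting in general ($\tau$ is an involution of $\bar{D}=D^n$, not of $D$, so $\sO_W$ is not an invariant subring of $\sO_D$), and the companion claim that $H^{d-1}_{\fm_y}(\sO_X)\to H^{d-1}_{\fm_y}(\sO_D)$ ``cannot contribute'' in the converse direction is asserted, not proved. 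So either run the argument with the untruncated vanishing ranges, exactly as the paper does, or accept that the top-degree case requires an input that neither your sketch nor the paper supplies.
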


\begin{remark}
As the proof will show, the statement of \autoref{proposition:Serre_property_of_quotient} is local on $Y$. That is, the statement holds if we replace adequately \textit{being $S_i$} with \textit{being $S_i$ at $y\in Y$}, respectively with \textit{being $S_i$ along $p^{-1}(y)\subset X$}.
\end{remark}

\begin{proof}
For simplicity, we write $\sW=D/R_D(\tau)$. Then by \cite[Proposition 25]{Kollar_Quotients_by_finite_equivalence_relations} the diagram
		$$\begin{tikzcd}
		D\arrow[r, hook] \arrow[d] & X\arrow[d, "p"] \\
		\sW\arrow[r, hook] & Y
		\end{tikzcd}$$
is a universal push-out and $\sW\hookrightarrow Y$ is a closed embedding. Thus if $y\in Y$ is a point with $\fm_{y}=(f_1,\dots,f_n)\subset \sO_y:=\sO_{Y,y}$, then the diagrams
		\begin{equation}\label{eqn:push_out_diagram}
		\begin{tikzcd}
		(\sO_{D\times \sO_{y}})_{f_{i_0}\cdots f_{i_s}} && (\sO_{X\times \sO_{y}})_{f_{i_0}\cdots f_{i_s}} \arrow[ll] \\
		(\sO_{\sW,y})_{f_{i_0}\cdots f_{i_s}} \arrow[u] && (\sO_{y})_{f_{i_0}\cdots f_{i_s}}\arrow[ll, "p^{i_0,\dots,i_s}" above]\arrow[u]
		\end{tikzcd}
		\end{equation}
are pullback diagrams of semi-local rings. Notice that $p^{i_0,\dots,i_s}$ is always surjective, since $\sW\hookrightarrow Y$ is a closed embedding. 

For $\sA\in\{\sO_{y},\sO_{\sW,y},\sO_{X\times\sO_{y}},\sO_{D\times\sO_{y}}\}$ we denote by $\check{C}^\bullet(\bold{f};\sA)$ the alternating \v{C}ech complex associated to $\sA$ and to $\bold{f}=\{f_1,\dots,f_n\}$. Recall that
		$$\check{C}^\bullet(\bold{f};\sA)=\left[0\to \sA\to \bigoplus_{i}\sA_{f_i}\to \bigoplus_{i_0<i_1}\sA_{f_{i_0}f_{i_1}} \to \dots \to \sA_{f_1\cdots f_n}\to 0\right]$$
with differential defined as alternated sums of the natural localization maps. Thus it is easy to see that the diagrams \autoref{eqn:push_out_diagram} induce a commutative square of complexes
		$$\begin{tikzcd}
		\check{C}^\bullet(\bold{f};\sO_{D\times \sO_y}) & \check{C}^\bullet(\bold{f};\sO_{X\times\sO_y})\arrow[l] \\
		\check{C}^\bullet(\bold{f};\sO_{W,y}) \arrow[u] & \check{C}^\bullet(\bold{f};\sO_{y})\arrow[u]\arrow[l, "p^\bullet" above, twoheadrightarrow]
		\end{tikzcd}$$
where $p^\bullet$ is surjective and the square is a pullback at every degree. Now the cohomology of $\check{C}(\bold{f};\sA)$ is equal to the local cohomology of $\sA$ along $\fm_y\sA$ \cite[Theorem 7.13]{24_hours_of_cohomology}. By the first part of \autoref{lemma:homological_lemma}, we get a surjective map
		$$H^j_{y}(\sO_{y})\twoheadrightarrow H^j_{X_y}(\sO_{X\times\sO_{y}})\times_{H^j_{D_y}(\sO_{D\times\sO_{y}})}H^j_{y}(\sO_{W,y}).$$
		Now assume that $D$ is $S_r$. Then $H^j_{z}(\sO_{D,z})=0$ for every $j<r$ and $z\in D$. If $z$ maps to $y\in Y$, then $\sO_{D,z}$ is the localization of $\sO_{D\times \sO_y}$ at some maximal ideal. Moreover $\fm_y\sO_{D,z}$ is equal (up to taking its radical, to which the local cohomology is insensitive) to the Jacobson radical of $\sO_{D\times\sO_y}$. Thus $H^j_{D_y}(\sO_{D\times\sO_{y}})=0$ for every $j<r$ by \autoref{lemma:local_cohomology_on_semi_local_ring}. Therefore by the second part of \autoref{lemma:homological_lemma}, we have an isomorphism
				\begin{equation}\label{eqn:equality_local_cohomology}
				H^j_{y}(\sO_{y})\cong H^j_{X_y}(\sO_{X\times\sO_{y}})\times H^j_{y}(\sO_{W,y}) \quad \forall j< r.
				\end{equation}
		Similarly to above the group $H^j_{X_y}(\sO_{X\times \sO_y})$ is equal to the $j^\text{th}$ local cohomology group of the semi-local ring $\sO_{X\times\sO_y}$ along its Jacobson radical. 
		
		Thus we see that if $X$ and $\sW$ are $S_{r+1}$, then by \autoref{eqn:equality_local_cohomology} we have $H^j_y(\sO_y)=0$ for every $j<r$. This proves the first point. Conversely, if $D$ and $Y$ are $S_{r+1}$, then the isomorphisms \autoref{eqn:equality_local_cohomology} hold for all $j\leq r$, and thus $X$ and $W$ are $S_{r+1}$.
\end{proof}

\bibliographystyle{alpha}
\bibliography{Bibliography}

\end{document}